
\documentclass[12pt]{amsart}
\usepackage{amssymb,latexsym}
\usepackage{shuffle}
\usepackage{amscd}
\usepackage{difftrees}
\usepackage{mathtools}
\usepackage[all]{xy}
\usepackage[utf8]{inputenc}
\usepackage[top=35mm, bottom=25mm, left=30mm, right=30mm]{geometry}

\begin{document}

\author{Gunnar Fl{\o}ystad}
\address{Matematisk institutt\\
         Realfagsbygget \\
         Allegaten 41 \\
         5008 Bergen}
\email{gunnar@mi.uib.no}
\author{Hans Munthe-Kaas}
\address{Matematisk institutt \\
         Realfagsbygget\\
         Allegaten 41\\
         5008 Bergen}
\email{Hans.Munthe-Kaas@uib.no}

\date{date}

\title[Pre- and post-Lie algebras: The algebro-geometry view]
{Pre- and post-Lie algebras:\\ The algebro-geometric view}

\begin{abstract}
We relate composition and substitution in pre- and
post-Lie algebras to algebraic geometry. The Connes-Kreimer Hopf algebras, 
and MKW Hopf algebras are
then coordinate rings of the infinite-dimensional affine varieties 
consisting of series of trees, resp.\ Lie series of ordered trees.
Furthermore we describe the Hopf algebras which are coordinate
rings of the automorphism groups of these varieties, 
which govern the substitution law in pre- and post-Lie algebras.
\end{abstract}
\maketitle


\theoremstyle{plain}
\newtheorem{theorem}{Theorem}[section]
\newtheorem{corollary}[theorem]{Corollary}
\newtheorem*{main}{Main Theorem}
\newtheorem{lemma}[theorem]{Lemma}
\newtheorem{proposition}[theorem]{Proposition}

\theoremstyle{definition}
\newtheorem{definition}[theorem]{Definition}
\newtheorem{fact}[theorem]{Fact}

\theoremstyle{remark}
\newtheorem{notation}[theorem]{Notation}
\newtheorem{remark}[theorem]{Remark}
\newtheorem{example}[theorem]{Example}
\newtheorem{claim}{Claim}


\newcommand{\psp}[1]{{{\bf P}^{#1}}}
\newcommand{\psr}[1]{{\bf P}(#1)}
\newcommand{\op}{{\mathcal O}}
\newcommand{\opw}{\op_{\psr{W}}}
\newcommand{\go}{\op}

\newcommand{\ini}[1]{\text{in}(#1)}
\newcommand{\gin}[1]{\text{gin}(#1)}
\newcommand{\kr}{k}
\newcommand{\kkr}{{\Bbbk}}
\newcommand{\kk}{k}
\newcommand{\pd}{\partial}
\newcommand{\vardel}{\partial}
\renewcommand{\tt}{{\bf t}}


\newcommand{\coh}{{{\text{{\rm coh}}}}}


\newcommand{\modv}[1]{{#1}\text{-{mod}}}
\newcommand{\modstab}[1]{{#1}-\underline{\text{mod}}}

\newcommand{\sut}{{}^{\tau}}
\newcommand{\sumit}{{}^{-\tau}}
\newcommand{\til}{\thicksim}

\newcommand{\totp}{\text{Tot}^{\prod}}
\newcommand{\dsum}{\bigoplus}
\newcommand{\dprod}{\prod}
\newcommand{\lsum}{\oplus}
\newcommand{\lprod}{\Pi}

\newcommand{\La}{{\Lambda}}

\newcommand{\sirstj}{\circledast}

\newcommand{\she}{\EuScript{S}\text{h}}
\newcommand{\cm}{\EuScript{CM}}
\newcommand{\cmd}{\EuScript{CM}^\dagger}
\newcommand{\cmri}{\EuScript{CM}^\circ}
\newcommand{\cler}{\EuScript{CL}}
\newcommand{\clerd}{\EuScript{CL}^\dagger}
\newcommand{\clerri}{\EuScript{CL}^\circ}
\newcommand{\gor}{\EuScript{G}}
\newcommand{\cF}{\mathcal{F}}
\newcommand{\cG}{\mathcal{G}}
\newcommand{\cH}{\mathcal{H}}
\newcommand{\cM}{\mathcal{M}}
\newcommand{\cE}{\mathcal{E}}
\newcommand{\cD}{\mathcal{D}}
\newcommand{\cI}{\mathcal{I}}
\newcommand{\cP}{\mathcal{P}}
\newcommand{\cK}{\mathcal{K}}
\newcommand{\cL}{\mathcal{L}}
\newcommand{\cS}{\mathcal{S}}
\newcommand{\cC}{\mathcal{C}}
\newcommand{\cO}{\mathcal{O}}
\newcommand{\cJ}{\mathcal{J}}
\newcommand{\cU}{\mathcal{U}}
\newcommand{\cR}{\mathcal{R}}
\newcommand{\cQ}{\mathcal{Q}}
\newcommand{\cX}{\mathcal{X}}
\newcommand{\mm}{\mathfrak{m}}

\newcommand{\dlim} {\varinjlim}
\newcommand{\ilim} {\varprojlim}

\newcommand{\CM}{\text{CM}}
\newcommand{\Mon}{\text{Mon}}


\newcommand{\Kom}{\text{Kom}}


\newcommand{\EH}{{\mathbf H}}
\newcommand{\res}{\text{res}}
\newcommand{\Hom}{\text{Hom}}
\newcommand{\inhom}{{\underline{\text{Hom}}}}
\newcommand{\Ext}{\text{Ext}}
\newcommand{\Tor}{\text{Tor}}
\newcommand{\ghom}{\mathcal{H}om}
\newcommand{\gext}{\mathcal{E}xt}
\newcommand{\id}{\text{{id}}}
\newcommand{\im}{\text{im}\,}
\newcommand{\codim} {\text{codim}\,}
\newcommand{\resol}{\text{resol}\,}
\newcommand{\rank}{\text{rank}\,}
\newcommand{\lpd}{\text{lpd}\,}
\newcommand{\coker}{\text{coker}\,}
\newcommand{\supp}{\text{supp}\,}
\newcommand{\Ad}{A_\cdot}
\newcommand{\Bd}{B_\cdot}
\newcommand{\Fd}{F_\cdot}
\newcommand{\Gd}{G_\cdot}


\newcommand{\sus}{\subseteq}
\newcommand{\sups}{\supseteq}
\newcommand{\pil}{\rightarrow}
\newcommand{\vpil}{\leftarrow}
\newcommand{\rpil}{\leftarrow}
\newcommand{\lpil}{\longrightarrow}
\newcommand{\inpil}{\hookrightarrow}
\newcommand{\pils}{\twoheadrightarrow}
\newcommand{\projpil}{\dashrightarrow}
\newcommand{\dotpil}{\dashrightarrow}
\newcommand{\adj}[2]{\overset{#1}{\underset{#2}{\rightleftarrows}}}
\newcommand{\mto}[1]{\stackrel{#1}\longrightarrow}
\newcommand{\vmto}[1]{\stackrel{#1}\longleftarrow}
\newcommand{\mtoelm}[1]{\stackrel{#1}\mapsto}

\newcommand{\eqv}{\Leftrightarrow}
\newcommand{\impl}{\Rightarrow}

\newcommand{\iso}{\cong}
\newcommand{\te}{\otimes}
\newcommand{\into}[1]{\hookrightarrow{#1}}
\newcommand{\ekv}{\Leftrightarrow}
\newcommand{\equi}{\simeq}
\newcommand{\isopil}{\overset{\cong}{\lpil}}
\newcommand{\equipil}{\overset{\equi}{\lpil}}
\newcommand{\ispil}{\isopil}
\newcommand{\vvi}{\langle}
\newcommand{\hvi}{\rangle}
\newcommand{\susneq}{\subsetneq}
\newcommand{\sgn}{\text{sign}}


\newcommand{\xd}{\check{x}}
\newcommand{\ortog}{\bot}
\newcommand{\tL}{\tilde{L}}
\newcommand{\tM}{\tilde{M}}
\newcommand{\tH}{\tilde{H}}
\newcommand{\tvH}{\widetilde{H}}
\newcommand{\tvh}{\widetilde{h}}
\newcommand{\tV}{\tilde{V}}
\newcommand{\tS}{\tilde{S}}
\newcommand{\tT}{\tilde{T}}
\newcommand{\tR}{\tilde{R}}
\newcommand{\tf}{\tilde{f}}
\newcommand{\ts}{\tilde{s}}
\newcommand{\tp}{\tilde{p}}
\newcommand{\tr}{\tilde{r}}
\newcommand{\tfst}{\tilde{f}_*}
\newcommand{\empt}{\emptyset}
\newcommand{\bfa}{{\mathbf a}}
\newcommand{\bfb}{{\mathbf b}}
\newcommand{\bfd}{{\mathbf d}}
\newcommand{\bfl}{{\mathbf \ell}}
\newcommand{\bfx}{{\mathbf x}}
\newcommand{\bfm}{{\mathbf m}}
\newcommand{\bfv}{{\mathbf v}}
\newcommand{\bft}{{\mathbf t}}
\newcommand{\la}{\lambda}
\newcommand{\bfen}{{\mathbf 1}}
\newcommand{\bfe}{{\mathbf 1}}
\newcommand{\ep}{\epsilon}
\newcommand{\en}{r}
\newcommand{\tu}{s}
\newcommand{\Sym}{\text{Sym}}

\newcommand{\ome}{\omega_E}

\newcommand{\bevis}{{\bf Proof. }}
\newcommand{\demofin}{\qed \vskip 3.5mm}
\newcommand{\nyp}[1]{\noindent {\bf (#1)}}
\newcommand{\demo}{{\it Proof. }}
\newcommand{\demodone}{\demofin}
\newcommand{\parg}{{\vskip 2mm \addtocounter{theorem}{1}  
                   \noindent {\bf \thetheorem .} \hskip 1.5mm }}

\newcommand{\lcm}{{\text{lcm}}}


\newcommand{\dl}{\Delta}
\newcommand{\cdel}{{C\Delta}}
\newcommand{\cdelp}{{C\Delta^{\prime}}}
\newcommand{\dlst}{\Delta^*}
\newcommand{\Sdl}{{\mathcal S}_{\dl}}
\newcommand{\lk}{\text{lk}}
\newcommand{\lkd}{\lk_\Delta}
\newcommand{\lkp}[2]{\lk_{#1} {#2}}
\newcommand{\del}{\Delta}
\newcommand{\delr}{\Delta_{-R}}
\newcommand{\dd}{{\dim \del}}
\newcommand{\Del}{\Delta}

\renewcommand{\aa}{{\bf a}}
\newcommand{\bb}{{\bf b}}
\newcommand{\cc}{{\bf c}}
\newcommand{\xx}{{\bf x}}
\newcommand{\yy}{{\bf y}}
\newcommand{\zz}{{\bf z}}
\newcommand{\mv}{{\xx^{\aa_v}}}
\newcommand{\mF}{{\xx^{\aa_F}}}

\newcommand{\Symm}{\text{Sym}}
\newcommand{\pnm}{{\bf P}^{n-1}}
\newcommand{\opnm}{{\go_{\pnm}}}
\newcommand{\ompnm}{\omega_{\pnm}}

\newcommand{\pn}{{\bf P}^n}
\newcommand{\hele}{{\mathbb Z}}
\newcommand{\nat}{{\mathbb N}}
\newcommand{\rasj}{{\mathbb Q}}
\newcommand{\bfone}{{\mathbf 1}}

\newcommand{\dt}{\bullet}
\newcommand{\disk}{\scriptscriptstyle{\bullet}}

\newcommand{\cxF}{F_\dt}
\newcommand{\pol}{f}

\newcommand{\Rn}{{\mathbb R}^n}
\newcommand{\An}{{\mathbb A}^n}
\newcommand{\frg}{\mathfrak{g}}
\newcommand{\PW}{{\mathbb P}(W)}

\newcommand{\pos}{{\mathcal Pos}}
\newcommand{\g}{{\gamma}}

\newcommand{\Vaa}{V_0}
\newcommand{\Bp}{B^\prime}
\newcommand{\Bpp}{B^{\prime \prime}}
\newcommand{\bbp}{\mathbf{b}^\prime}
\newcommand{\bbpp}{\mathbf{b}^{\prime \prime}}
\newcommand{\bp}{{b}^\prime}
\newcommand{\bpp}{{b}^{\prime \prime}}

\def\CC{{\mathbb C}}
\def\GG{{\mathbb G}}
\def\ZZ{{\mathbb Z}}
\def\NN{{\mathbb N}}
\def\RR{{\mathbb R}}
\def\OO{{\mathbb O}}
\def\QQ{{\mathbb Q}}
\def\VV{{\mathbb V}}
\def\PP{{\mathbb P}}
\def\EE{{\mathbb E}}
\def\FF{{\mathbb F}}
\def\AA{{\mathbb A}}

\newcommand{\Lie}{\text{Lie}}
\newcommand{\hLie}{\widehat{\Lie}}
\newcommand{\Magma}{\text{Magma}}
\newcommand{\fg}{L}
\newcommand{\fc}{K}
\newcommand{\ofg}{\overline{\fg}}
\newcommand{\gr}{\text{gr}\,}
\newcommand{\grfg}{\gr \fg}
\newcommand{\Po}{P}
\newcommand{\CPo}{Q}
\newcommand{\Pre}{A}
\newcommand{\PL}{\text{PL}}
\newcommand{\OT}{\text{OT}}
\newcommand{\OF}{\text{OF}}
\newcommand{\surpil}{\twoheadrightarrow}
\newcommand{\oU}{\overline{U}}
\newcommand{\gd}{\circledast}
\newcommand{\shu}{\shuffle}
\newcommand{\Set}{\mathbf{Set}}
\newcommand{\PostLie}{\mathbf{PostLie}}
\newcommand{\DAlg}{\mathbf{D-algebra}}
\newcommand{\postlie}{\text{postLie}}
\newcommand{\cA}{\mathcal{A}}
\newcommand{\bihom}[2]{\overset{#1}{\underset{#2}{\rightleftarrows}}}
\newcommand{\gft}{\rhd}
\newcommand{\pode}{\curvearrowright}
\newcommand{\rf}{\sharp}
\newcommand{\End}{\text{End}}
\newcommand{\sta}{\star}
\newcommand{\fin}{\text{fin}}
\newcommand{\GL}{\text{GL}}
\newcommand{\SL}{\text{SL}}
\newcommand{\vertex}{\text{vert}}
\newcommand{\AdmV}{\text{AdmV}}
\newcommand{\AdmP}{\text{AdmP}}
\newcommand{\rot}{\text{root}}
\newcommand{\LB}{\text{LB}}
\newcommand{\injpil}{\hookrightarrow}
\newcommand{\charac}{\text{char.}}

\newcommand{\oA}{\overline{A}}
\newcommand{\hE}{\hat{E}}
\newcommand{\hP}{\hat{P}}
\newcommand{\ben}{{\mathbf 1}}
\newcommand{\hU}{\hat{U}}
\newcommand{\hte}{\hat{\te}}
\newcommand{\lbra}{[\![}
\newcommand{\rbra}{]\!]}
\newcommand{\plcdot}{\mathrlap{{\hskip 0.6mm}\bullet}{+}}
\newcommand{\plstar}{\mathrlap{{\hskip 0.55mm}*}{+}}
\newcommand{\field}{\text{field}}
\newcommand{\flow}{\text{flow}}
\newcommand{\Chi}{\chi}
\newcommand{\cZ}{{\mathcal Z}}




\tableofcontents

\section*{Introduction}
Pre-Lie algebras  were first introduced in two different papers  from 1963.  Murray Gerstenhaber~\cite{gerstenhaber1963cohomology} studies deformations of algebras and Ernest Vinberg~\cite{vinberg1963theory}
 problems in differential geometry.  The same year John Butcher~\cite{butcher1963coefficients} published the first in a series of papers studying algebraic structures of numerical integration, culminating in his seminal paper~\cite{butcher1972algebraic} where B-series, the convolution product and the antipode of the Butcher--Connes--Kreimer Hopf algebra are introduced.
 
Post-Lie algebras  are generalisations of pre-Lie algebras introduced in the last decade. Bruno Vallette~\cite{vallette2007homology} introduced the post-Lie operad as the Koszul dual  of
the commutative trialgebra operad. Simultaneously post-Lie algebras appear in the study of numerical integration on Lie groups and manifolds~\cite{munthe2003enumeration,MK-W}. In a differential geometric picture a pre-Lie algebra is the algebraic structure of the flat and torsion free connection on a locally Euclidean space, whereas post-Lie algebras appear naturally as the  algebraic structure of the flat, constant torsion connection given by the Maurer--Cartan form on a Lie group~\cite{munthe2013post}. Recently it is shown that the sections of an anchored vector bundle admits a post-Lie structure if and only if the bundle is an action Lie algebroid~\cite{munthe16anchor}. 

B-series is a fundamental tool in the study of flow-maps (e.g.\ numerical integration) on euclidean spaces. The generalised Lie-Butcher LB-series are combining B-series with Lie series and have been introduced for studying integration on Lie groups and manifolds. 

In this paper we study B-series and LB-series from an algebraic geometry point of view. The space of B-series and LB-series can be defined as completions of the free pre- and post-Lie algebras. We study (L)B-series as an algebraic variety, where the coordinate ring has a natural Hopf algebra structure. In particular we are interested in the so-called substitution law. Substitutions for pre-Lie algebras were first introduced in numerical analysis~\cite{chartier2005}. The algebraic structure of pre-Lie substitutions and the underlying substitution Hopf algebra were introduced in~\cite{EF:TwoInteract}. For the post-Lie case, recursive formulae for substitution were given in~\cite{MK-L:Back}.  However, the corresponding Hopf algebra of substitution for post-Lie algebras was not  understood at that time. 

In the present work we show that the algebraic geometry view gives a natural way to understand both the Hopf algebra of composition and the Hopf algebra of substitution for pre- and post-Lie algebras. 

The paper is organised as follows. In Part 1 we study fundamental algebraic properties of the enveloping algebra of Lie-, pre-Lie and post-Lie algebras
for the general setting that these algebras $A$ are endowed with a
decreasing filtration $A = A^1 \supseteq A^2 \supseteq \cdots$. This seems
to be the general setting where we can define the exponential and
logarithm maps, and define the (generalized) Butcher product for pre- and
post-Lie algebras.
 Part~2 elaborates an algebraic geometric setting, where the pre- or post-Lie algebra forms an algebraic variety and the corresponding coordinate ring acquires  the structure of a Hopf algebra. This yields the Hopf algebra of substitutions in the free post-Lie algebra. Finally, we provide a recursive formula for the coproduct in this substitution Hopf algebra.

\part{The non-algebro geometric setting}
In this part we have no type of finiteness condition on
the Lie algebras, and pre- and post-Lie algebras.

\section{The exponential and logarithm maps for Lie algebras}

We work in the most general setting where we can define the 
exponential and logarithm maps. 
In Subsection \ref{subsec:ExpLogFil}
we assume the Lie algebra comes
with a decreasing filtration, and is complete
with respect to this filtration. We define the completed
enveloping algebra, and
discuss its properties. This is the natural general setting
for the exponential and logarithm maps which we recall in 
Subsection \ref{subsec:ExpLogMaps}.

\subsection{The Euler idempotent}

The setting in this subsection is any Lie algebra $L$, 
finite
or infinite dimensional over a field $\kr$ of characteristic 
zero. 
Let $U(L)$ be its enveloping algebra. 
This is a Hopf algebra with unit $\eta$, counit $\epsilon$ and
coproduct 
\[ \Delta : U(L) \pil U(L) \te_\kr U(L) \]
defined by $\Delta(\ell) = 1 \te \ell + \ell \te 1$ for any
$\ell \in L$, and extended to all of $U(L)$ by requiring $\Delta$ to 
be an algebra homomorphism.

For any algebra $A$ with multiplication map $\mu_A : A \te A \pil A$,
we have the convolution  product $\star$ on $\Hom_\kr(U(L),A)$. For $f,g \in 
\Hom_\kr(U(L),A)$ it is defined as
\[ f \star g = \mu_A \circ (f \te g) \circ \Delta_{U(L)}. \]
Let $\ben$ be the identity map on $U(L)$, and $J = \ben - \eta \circ \epsilon$.
The {\it Eulerian idempotent} $e : U(L) \pil U(L)$ is defined by
\[ e = \log (\ben) = \log (\eta \circ \epsilon + J) 
= J - \frac{J^{\star 2}}{2} + \frac{J^{\star 3}}{3} - \cdots. \]

\begin{proposition} \label{pro:ExpLogEuler}
The image of $e: U(L) \pil U(L)$ is $L \sus U(L)$, and
$e$ is the identity restricted to $L$.
\end{proposition}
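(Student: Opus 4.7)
The plan splits into two steps: showing $e|_L = \id_L$, and showing $e(U(L)) \subseteq L$. The first is an easy direct check: for $\ell \in L$ one has $\epsilon(\ell) = 0$ and $\Delta(\ell) = 1 \te \ell + \ell \te 1$, so $J(\ell) = \ell$ and $J^{\star 2}(\ell) = J(1)J(\ell) + J(\ell)J(1) = 0$; iterating gives $J^{\star n}(\ell) = 0$ for all $n \geq 2$, so the series defining $e(\ell)$ collapses to $\ell$. Combined with the second step this also yields $\im e \supseteq L$, hence equality.

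For the containment $\im e \subseteq L$ I reduce to the identity
$$(\dagger) \qquad \Delta \circ e \;=\; (e \te \eta\epsilon + \eta\epsilon \te e) \circ \Delta,$$
since by Poincar\'e--Birkhoff--Witt the primitive elements of $U(L)$ coincide with $L$, and $(\dagger)$ is exactly the statement that $e(x)$ is primitive for every $x$. To establish $(\dagger)$, I work in the convolution algebra $\Hom_\kr(U(L), U(L) \te U(L))$ and introduce three maps from $\End_\kr(U(L))$:
$$\Delta_*(f) = \Delta \circ f, \qquad \iota_1(f) = (f \te \eta\epsilon) \circ \Delta, \qquad \iota_2(f) = (\eta\epsilon \te f) \circ \Delta.$$
Direct verification shows each is a $\star$-algebra homomorphism ($\Delta_*$ because $\Delta$ is an algebra map, the $\iota_i$ from the counit identities), that $\Delta_*(\ben) = \Delta = \iota_1(\ben) \star \iota_2(\ben)$, and, crucially using cocommutativity of $U(L)$, that $\iota_1(f) \star \iota_2(g) = \iota_2(g) \star \iota_1(f)$ for all $f, g$.

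The PBW filtration makes $J$ locally nilpotent under $\star$: for $x$ of PBW degree $N$, any iterated coproduct $\Delta^{[n]}(x)$ with $n > N$ contains a tensor factor in $\kr \cdot 1$, which $J$ annihilates. Hence $\exp_\star$ and $\log_\star$ are well defined and mutually inverse on the augmentation ideal of $\End_\kr(U(L))$, giving $\ben = \exp_\star(e)$. Applying the three algebra homomorphisms to this identity and using that $\iota_1(e)$ and $\iota_2(e)$ convolve-commute, one obtains
$$\exp_\star(\Delta_*(e)) \;=\; \Delta_*(\ben) \;=\; \iota_1(\ben) \star \iota_2(\ben) \;=\; \exp_\star(\iota_1(e)) \star \exp_\star(\iota_2(e)) \;=\; \exp_\star\bigl(\iota_1(e) + \iota_2(e)\bigr),$$
and injectivity of $\exp_\star$ delivers $(\dagger)$. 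The main obstacle is the convolution bookkeeping: checking the three operators are algebra homomorphisms, deploying cocommutativity at precisely the right moment to force $\iota_1(e)$ and $\iota_2(e)$ to commute, and verifying that $\exp_\star, \log_\star$ are mutually inverse on the locally nilpotent part of $\End_\kr(U(L))$.
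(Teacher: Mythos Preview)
Your proof is correct. The paper itself gives no argument here, merely citing the canonical decomposition in Reutenauer's book (\cite{Re}, 0.4.3 and Proposition~3.7), so your self-contained treatment is more informative than the paper's own proof. The route you take --- showing that $e$ lands in the primitives by establishing $\Delta\circ e = (e\te\eta\epsilon + \eta\epsilon\te e)\circ\Delta$ via convolution-algebra homomorphisms into $\Hom_\kr(U(L),U(L)\te U(L))$, exploiting cocommutativity to make $\iota_1(e)$ and $\iota_2(e)$ commute, and then invoking PBW to identify primitives with $L$ --- is essentially the standard argument underlying the cited result (due to Patras/Reutenauer), so there is no genuine divergence in method, only in explicitness.

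One small remark: when you say ``injectivity of $\exp_\star$ delivers $(\dagger)$'', you are using that $\exp_\star$ is injective on the locally convolution-nilpotent maps in $\Hom_\kr(U(L),U(L)\te U(L))$, which in turn requires knowing that $\Delta_*(e)$ and $\iota_1(e)+\iota_2(e)$ both lie there. This is immediate since each kills $1$ and the PBW filtration argument you gave for $J$ transports to the target, but it is worth stating.
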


\begin{proof} This is a special case of the canonical decomposition
stated in 0.4.3 in \cite{Re}. See also Proposition
3.7, and part (i) of its proof in \cite{Re}.
\end{proof}

Let $\Sym^c(L)$ be the free cocommutative coalgebra on $L$.
It is the subcoalgebra of the tensor coalgebra $T^c(L)$ consisting
of the symmetrized tensors
\begin{equation} \label{eq:ExpLogSym}
\sum_{\sigma \in S_n} l_{\sigma(1)} \te l_{\sigma(2)} \te \cdots
l_{\sigma(n)} \in L^{\te n}, \quad l_1, \ldots l_n \in L.
\end{equation}
The above proposition gives a linear map $U(L) \mto{e} L$.
Since $U(L)$ is a cocommutative
coalgebra, there is then a homomorphism of cocommutative coalgebras
\begin{equation} U(L) \mto{\alpha} \Sym^c(L). \label{eq:ExpLogAlfa}
\end{equation}
We now have the following strong version of the Poincar{\'e}-Birkhoff-Witt
theorem.

\begin{proposition} \label{pro:ExpLogPBW}
The map $ U(L) \mto{\alpha} \Sym^c(L)$ is an isomorphism 
of coalgebras.
\end{proposition}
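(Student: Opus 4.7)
The plan is to construct an explicit two-sided inverse to $\alpha$ via symmetrization, relying on the classical Poincar\'e--Birkhoff--Witt theorem together with the canonical decomposition of $U(L)$ induced by the Eulerian idempotents.

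First, I would introduce the symmetrization map
\begin{equation*}
s : \Sym^c(L) \pil U(L), \qquad
\sum_{\sigma \in S_n} l_{\sigma(1)} \te \cdots \te l_{\sigma(n)}
\; \mapsto \; \sum_{\sigma \in S_n} l_{\sigma(1)} \cdots l_{\sigma(n)},
\end{equation*}
where on the right hand side the product is formed in $U(L)$. Classical PBW asserts that $s$ is a linear isomorphism, and a direct computation, using that both $\Sym^c(L)$ and $U(L)$ are cocommutative coalgebras with $L$ as their space of primitives and that $s|_L = \id_L$, shows that $s$ is in fact a morphism of coalgebras.

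Next, I would show that $\alpha \circ s = \id_{\Sym^c(L)}$. Both sides are coalgebra endomorphisms of $\Sym^c(L)$, so by the universal property of $\Sym^c(L)$ as the cofree cocommutative coalgebra on $L$, it suffices to compare them after composition with the canonical projection $\pi : \Sym^c(L) \pil L$ onto the degree-one part. By the very construction of $\alpha$ from $e$ in (\ref{eq:ExpLogAlfa}), we have $\pi \circ \alpha = e$, so the required identity reduces to $e \circ s = \pi$ as maps $\Sym^c(L) \pil L$. On the degree-one summand $L \sus \Sym^c(L)$ the map $s$ is the inclusion $L \inpil U(L)$, and $e|_L = \id_L$ by Proposition~\ref{pro:ExpLogEuler}, so both sides equal $\id_L$. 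On the degree-$n$ summand with $n \geq 2$, the projection $\pi$ vanishes, so the content is that $e$ annihilates every symmetrized product $\sum_{\sigma \in S_n} l_{\sigma(1)} \cdots l_{\sigma(n)}$ in $U(L)$.

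This last vanishing is the technical heart of the argument and I expect it to be the main obstacle. It amounts to the statement that $e$ is the projector onto $L = L^{(1)}$ in the canonical grading $U(L) = \bigoplus_{n \geq 0} L^{(n)}$ with $L^{(n)} = s(\Sym^c_n(L))$, which is the full content of the canonical decomposition already invoked in the proof of Proposition~\ref{pro:ExpLogEuler} (Section 0.4.3 and Proposition 3.7 of \cite{Re}). Granting this, $\alpha \circ s = \id_{\Sym^c(L)}$; since $s$ is a linear bijection by PBW, it follows that $\alpha = s^{-1}$, and in particular $\alpha$ is an isomorphism of coalgebras.
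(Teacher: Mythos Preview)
Your strategy is the paper's strategy: both build the inverse of $\alpha$ as the symmetrization map (the paper calls it $\beta$) and both rest on the same key input, namely that $e$ annihilates $U_n(L)$ for $n\geq 2$, which is exactly the canonical decomposition from \cite{Re} that the paper packages in the proposition immediately following the statement. The paper simply asserts that $\alpha$ carries the symmetric product $(l_1,\ldots,l_n)$ to the symmetrizer in $\Sym^c_n(L)$, while you phrase the same computation as $\pi\circ\alpha\circ s=\pi$ via the cofree universal property; these are the same argument.

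There is, however, a genuine slip in your normalization. The map $s$ as you wrote it is \emph{not} a coalgebra morphism: on the degree-two symmetrizer $l_1\otimes l_2 + l_2\otimes l_1$ the deconcatenation coproduct contributes the cross-term $l_1\otimes l_2 + l_2\otimes l_1$, but $\Delta_{U(L)}(l_1 l_2 + l_2 l_1)$ contributes $2(l_1\otimes l_2 + l_2\otimes l_1)$. Since your appeal to the universal property of $\Sym^c(L)$ requires $\alpha\circ s$ to be a coalgebra map, this breaks the argument as written. The fix is to divide by $n!$ in degree $n$, i.e.\ to send the symmetrizer to the symmetric product $(l_1,\ldots,l_n)=\tfrac{1}{n!}\sum_\sigma l_{\sigma(1)}\cdots l_{\sigma(n)}$, which is precisely the paper's $\beta$; with that normalization $s$ is a coalgebra morphism, classical PBW still gives bijectivity, and your argument then goes through and coincides with the paper's.
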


In order to show this we expand more on the Euler idempotent.

Again for $l_1, \ldots, l_n \in L$ denote by $(l_1, \ldots, l_n)$ the
symmetrized product in $U(L)$: 
\begin{equation} \label{eq:ExpLogSymprod}
 \frac {1}{n!} \sum_{\sigma \in S_n} l_{\sigma(1)}l_{\sigma(2)} \cdots
l_{\sigma(n)}, 
\end{equation}
and let $U_n(L) \sus U(L)$ be the subspace generated by all these
symmetrized products.

\begin{proposition} Consider the map given by convolution of the
Eulerian idempotent:
\[ \frac{e^{\star p}}{p!} : U(L) \pil U(L).\]
\begin{itemize}
\item [a.]The map above is zero on $U_q(L)$ when $q \neq p$ and, the 
identity on $U_p(L)$.
\item [b.] The sum of these maps 
\[ \exp(e) = \eta \circ \epsilon + e + \frac{e^{\star 2}}{2} + 
\frac{e^{\star 3}}{3!} + \cdots \]
is the identity map on $U(L)$. (Note that the map is well since
the maps $e^{\star p}/p!$ vanishes on any element in $U(L)$ for $p$ sufficiently
large.)
\end{itemize}
From the above we get a decomposition
\[ U(L) = \bigoplus_{n \geq 0} U_n(L). \]
\end{proposition}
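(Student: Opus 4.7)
The plan is to derive part (b) from the formal convolution identity $\exp(\log \mathbf{1}) = \mathbf{1}$, then bootstrap an inductive proof of part (a) using (b), and finally deduce the direct sum decomposition.

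I first record two preliminaries. Since $\epsilon(1) = 1$ gives $J(1) = 0$, and $\Delta(1) = 1 \otimes 1$ forces $J^{\star n}(1) = 0$ for all $n \geq 1$, we obtain $e(1) = 0$. Next, for a monomial $x = \ell_1 \cdots \ell_n$ with $\ell_i \in L$, multiplicativity of $\Delta$ expands $\Delta^{(p-1)}(x)$ as a sum of tensors $y_1 \otimes \cdots \otimes y_p$ whose factors partition $\{\ell_1, \ldots, \ell_n\}$; when $p > n$, some $y_j = 1$ is killed by $e$, so $e^{\star p}(x) = 0$. Hence both $e$ and $\exp(e) = \sum_{p \geq 0} e^{\star p}/p!$ are termwise-defined endomorphisms of $U(L)$, and the formal identity $\exp(\log \mathbf{1}) = \mathbf{1}$ in the convolution algebra yields part (b).

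For part (a), I first derive the iterated coproduct on symmetrized products. Expanding $\Delta(\ell_{\sigma(1)} \cdots \ell_{\sigma(q)}) = \prod_i (1 \otimes \ell_{\sigma(i)} + \ell_{\sigma(i)} \otimes 1)$ and averaging over $\sigma \in S_q$ gives
\[
\Delta^{(p-1)}\bigl((\ell_1, \ldots, \ell_q)\bigr) = \sum_{(T_1, \ldots, T_p)} (\ell_{T_1}) \otimes \cdots \otimes (\ell_{T_p}),
\]
summed over ordered partitions of $[q]$ into $p$ (possibly empty) blocks, where $(\ell_T)$ denotes the symmetrized product on $\{\ell_i : i \in T\}$. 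I now prove by induction on $q \geq 2$ that $e$ kills $U_q(L)$. Applying $e^{\otimes p}$ and then $\mu^{(p-1)}$, a term survives only if every $T_j$ is nonempty (else $e(1) = 0$), and by the induction hypothesis any block of size $r$ with $2 \leq r < q$ is killed, while singletons are preserved because $e|_L = \mathrm{id}$. For $p = q$ only the all-singleton partitions survive, together contributing $q!(\ell_1, \ldots, \ell_q)$; for $2 \leq p < q$ every nonempty partition must contain a block of size in $[2, q-1]$ (since $p < q$ prevents all singletons and $p \geq 2$ bounds the largest block by $q - 1$), so the sum vanishes; for $p > q$ local nilpotence applies, and for $p = 0$ the counit kills augmentation-ideal elements. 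Only $p = 1$ leaves the unknown $e((\ell_1, \ldots, \ell_q))$. Substituting into part (b) gives $(\ell_1, \ldots, \ell_q) = e((\ell_1, \ldots, \ell_q)) + (\ell_1, \ldots, \ell_q)$, hence $e((\ell_1, \ldots, \ell_q)) = 0$, closing the induction.

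With $e$ now killing $U_r(L)$ for $r \neq 1$, the same case analysis establishes part (a) on symmetrized products, and hence on all of $U_q(L)$ by linearity. To see that $(e^{\star p}/p!)(U(L)) \subseteq U_p(L)$ in general: by cocommutativity of $\Delta$ the image of $e^{\otimes p} \circ \Delta^{(p-1)}$ lies in the $S_p$-invariants of $L^{\otimes p}$, and $\mu^{(p-1)}$ sends these into $U_p(L)$. Combining (a) and (b), every $x \in U(L)$ decomposes as $x = \sum_p (e^{\star p}/p!)(x)$ with the $p$-th summand in $U_p(L)$, so $U(L) = \sum_p U_p(L)$, and pairwise orthogonality of the projectors $e^{\star p}/p!$ (immediate from (a)) forces the sum to be direct. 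The main obstacle is orchestrating the induction cleanly so that the case analysis on $p$ isolates $p = 1$ as the sole unknown, allowing the single equation supplied by (b) to determine $e((\ell_1, \ldots, \ell_q))$.
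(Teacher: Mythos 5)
Your proof is correct, but it is genuinely different from what the paper does: the paper offers no argument at all for this proposition, simply citing the canonical decomposition in Reutenauer (0.4.3 and Proposition 3.7 of \cite{Re}), whereas you give a self-contained Hopf-algebraic proof. Your route — establish (b) first from the formal identity $\exp(\log(\eta\circ\epsilon+J))=\eta\circ\epsilon+J$, which is legitimate because on any fixed element all convolution series terminate, then bootstrap an induction on $q$ in which the case analysis of $e^{\star p}$ on a symmetrized product (via the ordered-partition formula for $\Delta^{(p-1)}$, which you compute correctly) isolates $p=1$ as the only unknown, so that (b) forces $e(U_q(L))=0$ for $q\geq 2$ — is a nice way to get the eigenspace statement (a) without invoking symmetric-group character theory or the Poincar\'e--Birkhoff--Witt basis, and it makes the logical dependencies transparent. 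Do note that you quietly rely on Proposition \ref{pro:ExpLogEuler} twice: $e|_L=\mathrm{id}$ for the singleton blocks, and $\mathrm{im}(e)\subseteq L$ (together with cocommutativity and characteristic zero, so that $S_p$-invariants of $L^{\otimes p}$ are spanned by symmetrized decomposable tensors) to place $(e^{\star p}/p!)(x)$ in $U_p(L)$ for arbitrary $x$; within the paper's logical order this is fine, since that proposition precedes the present one, but in the paper it too is only proved by citation to the same source, so your argument is not a from-scratch replacement for Reutenauer's result — it is a derivation of the present proposition from the earlier one. The remaining steps (orthogonality of the projectors from (a), hence directness of $\sum_p U_p(L)$, and surjectivity of the sum from (b)) are all sound.
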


\begin{proof} This is the canonical decomposition stated in
0.4.3 in \cite{Re}, see also Proposition 3.7 and its proof in \cite{Re}.
\end{proof}

\begin{proof}[Proof of Proposition \ref{pro:ExpLogPBW}]
Note that since $e$ vanishes on $U_n(L)$ for $n \geq 2$, by the way 
one constructs the map 
$\alpha$, it sends the symmetrizer $(l_1, \ldots, l_n) \in U_n(L)$
to the symmetrizer (\ref{eq:ExpLogSymprod}) in $\Sym_n^c(L)$.
This shows $\alpha$ is surjective. But there is also a linear map, the
surjective section
$\beta : \Sym_n^c(L) \pil U_n(L)$ sending the
symmetrizer (\ref{eq:ExpLogSymprod}) to the symmetric product $(l_1, \ldots,
l_n)$. This shows that $\alpha$ must also be injective.
\end{proof}

\subsection{Filtered Lie algebras} \label{subsec:ExpLogFil}
Now the setting is that the Lie algebra $\fg$ comes with a 
filtration
\[ \fg = \fg^1 \supseteq \fg^2 \supseteq \fg^3 \supseteq \cdots \]
such that $[\fg^i, \fg^j] \sus \fg^{i+j}$.
Examples of such may be derived from any Lie algebra over $\kr$:
\begin{itemize}
\item[1.] The lower central series gives such a filtration with 
$\fg^2 = [\fg,\fg]$ and $\fg^{p+1} = [\fg^p, \fg]$. 
\item[2.] The polynomials $L[h] = \oplus_{n \geq 1} L h^n$.
\item[3.] The power series $L\lbra h \rbra = \Pi_{n \geq 1} Lh^n$.
\end{itemize}

Let $\Sym_n(L)$ be the symmetric product of $L$, that is
the natural quotient of $L^{\te n}$ which is the
coinvariants $(L^{\te n})^{S_n}$ for the action of the symmetric group $S_n$.
By the definition of $\Sym^c(L)$ in \eqref{eq:ExpLogSym} there are maps
\[ \Sym_n^c(L) \hookrightarrow L^{\te n} \pil \Sym_n(L),\]
and the composition is a linear isomorphism.
We get a filtration on $\Sym_n(L)$ by letting
\[ F^p(L) = \sum_{i_1 + \cdots + i_n \geq p} L^{i_1} \cdots L^{i_n}. \]
The filtration on $L$ gives an associated graded Lie algebra 
$\grfg = \oplus_{i \geq 1} \fg_i/\fg_{i+1}$. The filtration on
$\Sym_n(L)$ also induces an associated graded vector space. 

\begin{lemma} \label{lem:ExpLogSymgr}
There is an isomorphism of associated graded vector spaces
\begin{equation} \label{eq:ExpLogSnL} 
\Sym_n(\gr L) \mto{\iso} \gr \Sym_n(L).
\end{equation}
\end{lemma}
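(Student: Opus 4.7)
The plan is to build the map $\Sym_n(\gr L) \to \gr \Sym_n(L)$ by lifting representatives, and then prove it is an isomorphism by reducing to the case of a finite, hence split, filtration. First I would define the map on a monomial $\bar l_1 \cdots \bar l_n$ with $\bar l_k \in L^{i_k}/L^{i_k+1}$ by choosing arbitrary lifts $l_k \in L^{i_k}$ and sending it to the class of $l_1 \cdots l_n$ in $F^p/F^{p+1}$, where $p = i_1 + \cdots + i_n$. Altering a lift by an element of $L^{i_k+1}$ changes the product by an element of $F^{p+1}$, so the map is well defined and symmetric in the $l_k$. Surjectivity is then immediate from the definition of $F^p$, since any representative is a sum of products $l_1 \cdots l_n$ with $\sum i_k \geq p$ and the summands with strict inequality already lie in $F^{p+1}$.

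The substantive part is injectivity, for which I would first reduce to a finite filtration. In a fixed grading degree $p$, the source $\Sym_n(\gr L)_p$ only involves the layers $L^i/L^{i+1}$ with $i \leq p$; on the target, any symmetric product having a factor in $L^{p+1}$ sits in $F^{p+1}$. Hence both sides in degree $p$ depend only on $L/L^{p+1}$, and it suffices to prove the lemma when $L = L^1 \supseteq \cdots \supseteq L^{p+1} = 0$ is finite.

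Over a field, a finite filtration always splits: one picks complements $V_i \subseteq L^i$ with $L^i = V_i \oplus L^{i+1}$ inductively, yielding $L = V_1 \oplus \cdots \oplus V_p$ with $V_i \cong L^i/L^{i+1}$. This realises $L$ as a graded vector space, so $\Sym_n(L)$ decomposes as $\bigoplus_{a_1 + \cdots + a_p = n} \Sym_{a_1}(V_1) \otimes \cdots \otimes \Sym_{a_p}(V_p)$ with each summand carrying weight $\sum_i i \cdot a_i$. A direct expansion then identifies $F^q$ with the sum of summands of weight $\geq q$, so that $F^p/F^{p+1}$ is exactly the weight-$p$ piece; under the identifications $V_i \cong L^i/L^{i+1}$ this matches $\Sym_n(\gr L)_p$, and it coincides with the map defined in the first step.

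I expect the main obstacle to be the splitting step. A decreasing filtration on an infinite-dimensional vector space over a field need not split globally: any choice of complements yields a subspace $\bigoplus_i V_i$ which can be strictly smaller than $L$, as the power series example in the list preceding the lemma illustrates. The reduction to $L/L^{p+1}$ is precisely what sidesteps this difficulty, since after truncating the filtration becomes finite and a splitting always exists.
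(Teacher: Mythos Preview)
Your proof is correct and matches the paper's approach: both define the natural map, note surjectivity, and then establish injectivity by splitting the filtration into complements $V_i \cong L^i/L^{i+1}$ and identifying $F^p/F^{p+1}$ with the weight-$p$ component of $\Sym_n(\bigoplus_i V_i)$. The only difference is cosmetic---you first pass to the quotient $L/L^{p+1}$ and then split the resulting finite filtration, whereas the paper chooses sections $s_i\colon L/L^{i+1}\to L$ directly into $L$; your truncation makes the splitting step slightly cleaner and avoids the mild ambiguity in the paper's trailing ``$\cdots$'' in the decomposition $L = L_1 \oplus \cdots \oplus L_p \oplus \cdots$.
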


\begin{proof}
Note first that there is a natural map (where $d$ denotes the grading
induced by the graded Lie algebra $\gr L$) 
\begin{equation} \label{eq:ExpLogSymfilt}
\Sym_n(\gr L)_d \pil F^d \Sym_n(L)/F^{d+1} \Sym_n(L). 
\end{equation}
It is also clear by how the filtration is defined 
that any element on the right may be lifted to some element
on the left, and so this map is surjective. We must then show that it
is injective.

Choose splittings $L/L^{i+1} \mto{s_i} L$ of $L \pil L/L^{i+1}$ for 
$i = 1, \ldots
p$, and let $L_i = s_i(L^i/L^{i+1})$. Then we have a direct sum decomposition
\[ L = L_1 \oplus L_2 \oplus \cdots \oplus L_p \oplus \cdots \]
Since in general $\Sym_n(A \oplus B)$ is equal to $\oplus_{i} \Sym_{i}(A) \te
\Sym_{n-i}(B)$ we get that
\begin{equation} \label{eq:ExpLogSymdecomp}
\Sym_n(L) = \oplus_{i_1, \ldots, i_p} S_{i_1}(L_1) \te \cdots
\te S_{i_p}(L_p), 
\end{equation}
where we sum over all compositions where $\sum i_j = n$.

\noindent{\bf Claim.} 
\[ F^d S_n(L) = \oplus_{i_1,\ldots, i_p} S_{i_1}(L_1) 
\te \cdots \te S_{i_p}(L_p), \]
where we sum over all $\sum i_j = n$ and $\sum j \cdot i_j \geq d$. 

\noindent{\it Proof of Claim.} Clearly we have an inclusion  $\supseteq$. 
Conversely let $a \in F^d \Sym_n(L)$. Then $a$ is a sum of products
$a_{r_1} \cdots a_{r_q}$ where $a_{r_j} \in L^{r_j}$ and $\sum r_j \geq d$. 
But then each $a_{r_j} \in \oplus_{t \geq r_j} L_t$, and so by the direct sum
decomposition in \eqref{eq:ExpLogSymdecomp}, 
each $a_{r_1} \cdots a_{r_q}$ lives in the 
right side of the claimed equality, and so does $a$.

\medskip
It now follows that the two sides of \eqref{eq:ExpLogSymfilt} have
the same dimension and so this map is an isomorphism.
\end{proof}

We have the enveloping algebra $U(\fg)$ and the enveloping algebra
of the associated graded algebra $U(\grfg)$.
 The augmentation ideal
$U(\fg)_+$ is the kernel $\ker U(\fg) \mto{\epsilon} \kr$ of the counit.
The enveloping algebra $U(L)$ now gets a filtration of ideals by letting
$F^1 = U(\fg)_+$ and 
\[ F^{p+1} = F^p \cdot U(\fg)_+ + (L^{p+1}), \] where $(L^{p+1})$ is the
ideal generated by $L^{p+1}$.
This filtration induces again a graded algebra 
\[ \gr^{\oplus} U(\fg) = \bigoplus_i F^i/F^{i+1}. \]
There is also another version, the graded product algebra, which we
will encounter later
\[ \gr^{\Pi} U(\fg) = \prod_i F^i/F^{i+1}. \]

\begin{proposition} \label{pro:ExpLogNatgriso}
The natural map of graded algebras 
\[ U(\grfg) \mto{\iso} \gr U(\fg), \]
is an isomorphism.
\end{proposition}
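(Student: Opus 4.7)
The plan is to construct the map canonically from the universal property of the enveloping algebra, show it is surjective by inspection of the filtration, and prove injectivity by identifying both sides graded piece by graded piece with $\Sym(\gr\fg)$, using the PBW isomorphism of Proposition~\ref{pro:ExpLogPBW} together with the graded comparison of Lemma~\ref{lem:ExpLogSymgr}.

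First I would verify the set-up. The hypothesis $[\fg^i,\fg^j]\sus\fg^{i+j}$ makes $\gr\fg$ a graded Lie algebra, and the same hypothesis implies $F^p\cdot F^q\sus F^{p+q}$ in $U(\fg)$, so $\gr^\oplus U(\fg)$ is a graded associative algebra. The natural injections $\fg^i/\fg^{i+1}\hookrightarrow F^iU(\fg)/F^{i+1}U(\fg)$ assemble into a graded Lie algebra map $\gr\fg\pil\gr^\oplus U(\fg)$, which extends through the universal property of $U$ to the desired map of graded algebras.

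Surjectivity in degree $d$ is direct from the definition of the filtration: $F^dU(\fg)$ is spanned by monomials $x_1\cdots x_n$ with $x_j\in\fg^{r_j}$ and $\sum r_j\geq d$, and, modulo $F^{d+1}$, each such monomial is the image of the corresponding product of classes in $U(\gr\fg)$. For injectivity I would build an inverse piece by piece. Applying Proposition~\ref{pro:ExpLogPBW} to the graded Lie algebra $\gr\fg$ yields a vector space isomorphism $U(\gr\fg)\iso\Sym(\gr\fg)$ that respects the internal grading. On the other side, the PBW isomorphism $U(\fg)\iso\Sym(\fg)$ of Proposition~\ref{pro:ExpLogPBW} applied to $\fg$ sends the filtration $F^pU(\fg)$ onto the filtration $F^p\Sym(\fg)$ of the preceding lemma, so it induces an isomorphism of associated graded vector spaces $\gr^\oplus U(\fg)\iso\gr\Sym(\fg)$. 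Composing with Lemma~\ref{lem:ExpLogSymgr} produces a vector space isomorphism
\[
\gr^\oplus U(\fg)\;\iso\;\Sym(\gr\fg)\;\iso\;U(\gr\fg),
\]
which one checks is a section of our natural map; this forces the natural map to be an isomorphism in each degree.

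The principal technical point is the filtered compatibility of the PBW isomorphism $U(\fg)\iso\Sym(\fg)$. One inclusion is immediate, since the symmetrized product $(x_1,\ldots,x_n)$ with $x_j\in\fg^{r_j}$ maps to a symmetric tensor of total filtration degree $\sum r_j$. The other inclusion is proved by induction on the number of factors: any monomial $x_1\cdots x_n$ in $F^pU(\fg)$ can be rewritten in the PBW basis by successively symmetrizing adjacent pairs, and the resulting commutator terms $[x_i,x_{i+1}]\in\fg^{r_i+r_{i+1}}$ keep the total filtration degree $\geq p$ at every step. This is the only place where the Lie bracket, beyond the construction of the map, plays a role; once it is in hand, the remainder of the argument is a formal assembly of Proposition~\ref{pro:ExpLogPBW} and Lemma~\ref{lem:ExpLogSymgr}.
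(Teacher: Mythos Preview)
Your proposal is correct and follows essentially the same route as the paper: use the PBW isomorphism on both $U(\gr\fg)$ and $U(\fg)$, pass to the associated graded on the latter, and close the square with Lemma~\ref{lem:ExpLogSymgr}. The paper packages this into a single commutative square with $\Sym^c$ rather than $\Sym$ and simply asserts that the filtrations on $U(\fg)$ and $\Sym^c(\fg)$ correspond under $\alpha$; your explicit commutator argument for filtered compatibility of the PBW map is exactly the verification the paper leaves implicit.
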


\begin{proof}
The filtrations on each $\Sym_n^c(L)$ induces a filtration on
$\Sym^c(L)$. Via the isomorphism $\alpha$ of \eqref{eq:ExpLogAlfa}
and the explicit form given in the proof of Proposition \ref{pro:ExpLogPBW}
the filtrations on 
$U(L)$ and on $\Sym^c_n(L)$ correspond. Hence 
\[ \gr \alpha : \gr U(L) \mto{\iso} \gr \Sym^c_n(L) \]
is an isomorphism of vector spaces.
There is also an isomorphism $\beta$ and a commutative diagram
\[ \begin{CD}  U(\gr L) @>{\beta}>> \Sym^c(\gr L) \\
@VVV  @VVV \\
\gr U(L) @>{\gr \alpha}>> \gr \Sym^c(L).
\end{CD} \] 
By Lemma \ref{lem:ExpLogSymgr} the right vertical map is an isomorphism
and so also the left vertical map.
\end{proof}

The cofiltration
\[  \cdots \surpil U(\fg)/F^{n} \surpil U(\fg)/F^{n-1} \surpil \cdots \]
induces the completion
\[ \hU(\fg) = \underset{p} \varprojlim \, U(\fg)/F^p. \]
This algebra also comes with the filtration $\hat{F}^p$. 
Let $\hat{\fg} = \underset{p} \varprojlim \, \fg/\fg^p$. 
\begin{lemma}
The completed algebras are equal:
\[ \hU(\hat{\fg}) = \hU(\fg), \]
and so this algebra only depends on the completion
$\hat{\fg}$.
\end{lemma}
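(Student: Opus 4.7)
The plan is to establish canonical isomorphisms $U(\fg)/F^p \cong U(\hat\fg)/\hat F^p$ at each finite level, compatibly in $p$, and then pass to the inverse limit. The pivotal observation is that $\fg^p$ is a Lie ideal of $\fg$ (since $[\fg,\fg^p] \subseteq \fg^{p+1} \subseteq \fg^p$), so the quotient $\fg/\fg^p$ is itself a filtered Lie algebra; and from the recursive definition $F^{p+1} = F^p \cdot U(\fg)_+ + (\fg^{p+1})$ a short induction shows that each $F^p$ is a two-sided ideal of $U(\fg)$ containing the ideal $(\fg^p)$ generated by $\fg^p$.

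First I would factor the surjection $U(\fg) \surpil U(\fg)/F^p$ through $U(\fg)/(\fg^p) = U(\fg/\fg^p)$, and then verify, by tracing through the inductive formula, that the image of $F^p$ in $U(\fg/\fg^p)$ coincides with the intrinsic filtration $\bar F^p$ built from the quotient filtration $\bar\fg^i = (\fg^i + \fg^p)/\fg^p$ on $\fg/\fg^p$. This produces
\[ U(\fg)/F^p \cong U(\fg/\fg^p)/\bar F^p, \]
and the identical argument with $\hat\fg$ in place of $\fg$ (with $\hat\fg^p$ defined as the kernel of the canonical projection $\hat\fg \surpil \fg/\fg^p$) yields $U(\hat\fg)/\hat F^p \cong U(\hat\fg/\hat\fg^p)/\bar F^p$.

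The remaining ingredient is the identification of the two right-hand sides. By the universal property of $\hat\fg = \varprojlim_q \fg/\fg^q$, the map $\fg \to \hat\fg$ induces an isomorphism of filtered Lie algebras $\fg/\fg^p \cong \hat\fg/\hat\fg^p$. Combining this with the two displays gives a canonical isomorphism $U(\fg)/F^p \cong U(\hat\fg)/\hat F^p$ at every finite level, compatible with the transition maps since all the identifications are induced by the single morphism $\fg \to \hat\fg$. Passing to the inverse limit in $p$ then delivers $\hU(\fg) \cong \hU(\hat\fg)$, which is the desired statement.

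The step I expect to be the most delicate is the verification that the image of $F^p \subseteq U(\fg)$ in $U(\fg/\fg^p)$ actually equals the intrinsic $\bar F^p$. The recursive formula for $F^p$ upstairs involves products built from $\fg^{n_1}\cdots \fg^{n_k}$ with $\sum n_j \geq p$ and possibly some $n_j \geq p$, whereas the corresponding $\bar\fg^{n_j}$ downstairs vanishes; one must check that any such troublesome term already lies in the ideal $(\fg^p)$ and hence is killed under the passage to $U(\fg/\fg^p)$. Once this bookkeeping is in place everything else is formal.
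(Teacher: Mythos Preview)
Your argument is correct, but it follows a different path from the paper. The paper uses the associated-graded machinery already in place: the map $\fg\to\hat\fg$ induces $\gamma:U(\fg)\to U(\hat\fg)$, and since $\gr\fg=\gr\hat\fg$, Proposition~\ref{pro:ExpLogNatgriso} gives $\gr U(\fg)\cong U(\gr\fg)=U(\gr\hat\fg)\cong\gr U(\hat\fg)$; from this one reads off that each $\gamma^p:U(\fg)/F^{p+1}\to U(\hat\fg)/F^{p+1}$ is an isomorphism, and takes the limit. Your route instead factors each finite quotient through the enveloping algebra of the finite-dimensional truncation $\fg/\fg^p$, showing $U(\fg)/F^p\cong U(\fg/\fg^p)/\bar F^p\cong U(\hat\fg/\hat\fg^p)/\bar F^p\cong U(\hat\fg)/\hat F^p$. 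The paper's proof is shorter precisely because it can invoke the PBW-type isomorphism $U(\gr\fg)\cong\gr U(\fg)$ as a black box; your argument is more elementary and self-contained, needing only the recursive description of $F^p$ and the isomorphism theorems, at the cost of the bookkeeping you correctly flag (that the image of $F^p$ in $U(\fg/\fg^p)$ is exactly the intrinsic $\bar F^p$, which does go through by the induction you outline since $(\fg^p)\subseteq F^p$).
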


\begin{proof}
The natural map $L \pil \hat{L}$ induces a natural map 
$U(\fg) \mto{\gamma} U(\hat{\fg})$.
Since $\fg$ and $\hat{\fg}$ have the same associated graded Lie
algebras, the two downwards maps in the commutative diagram
\[ \xymatrix{\gr U(\fg) \ar[dr ]  & \ar[r]   & \gr U(\hat{\fg})  \ar[ld] \\
                           & U(\gr \fg) &    } 
\]
are isomorphisms, showing that the upper horizontal map is an isomorphism.
But given the natural map $\gamma$ this easily implies that the map
of quotients
\[ U(\fg)/F^{p+1}U(\fg) \mto{\gamma^p} U(\hat{\fg})/F^{p+1}U(\hat{\fg}) \]
is an isomorphism, and so the completions are isomorphic.
\end{proof}

We denote the $d$'th graded part of the enveloping algebra $U(\grfg)$ 
by $U(\grfg)_d$. The following gives an idea of the ``size'' of 
$\hU(\fg)$. 

\begin{lemma} 
\[ \gr^{\Pi} \hU(\fg) = \hU(\grfg) = \prod_{d \in \hele} U(\grfg)_d. \]
\end{lemma}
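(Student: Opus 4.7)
The plan is to establish the two claimed equalities in sequence, leveraging Proposition~\ref{pro:ExpLogNatgriso} as the central input.

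First I would address the right-hand equality $\hU(\grfg) = \prod_{d} U(\grfg)_d$. Here $\grfg$ is graded (concentrated in positive degrees), hence $U(\grfg)$ inherits a grading $U(\grfg) = \bigoplus_{d \geq 0} U(\grfg)_d$. The filtration $F^p$ on $U(\grfg)$, defined as in Subsection~\ref{subsec:ExpLogFil}, should coincide with the filtration by degree: $F^p U(\grfg) = \bigoplus_{d \geq p} U(\grfg)_d$. I would verify this by induction on $p$, starting from $F^1 = U(\grfg)_+ = \bigoplus_{d \geq 1} U(\grfg)_d$, and in the induction step showing that any homogeneous element of degree $\geq p+1$ lies in either $F^p \cdot U(\grfg)_+$ (when it is a product of at least two positive-degree factors) or in $((\grfg)^{p+1})$ (when it lies in $\grfg$ itself). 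Once this is established, $U(\grfg)/F^p = \bigoplus_{d < p} U(\grfg)_d$, and passing to the inverse limit yields the infinite product $\prod_{d \geq 0} U(\grfg)_d$.

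For the left-hand equality $\gr^\Pi \hU(\fg) = \hU(\grfg)$, the key input is that completion commutes with taking quotients by the filtration. Concretely, for every $p$ the natural map induces an isomorphism $\hU(\fg)/\hat{F}^p \isopil U(\fg)/F^p$ (a standard property of the inverse-limit completion with respect to a decreasing filtration of ideals). Consequently $\hat{F}^p/\hat{F}^{p+1} \iso F^p/F^{p+1}$, and taking the product over all $p$ gives
\[ \gr^\Pi \hU(\fg) \;=\; \prod_{p \geq 0} \hat{F}^p/\hat{F}^{p+1} \;=\; \prod_{p \geq 0} F^p/F^{p+1}. \]
Now Proposition~\ref{pro:ExpLogNatgriso} identifies $U(\grfg) \iso \gr^\oplus U(\fg) = \bigoplus_{p} F^p/F^{p+1}$ as graded algebras, so in particular $U(\grfg)_p \iso F^p/F^{p+1}$ in each degree. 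Substituting this into the product above and combining with the first step gives $\gr^\Pi \hU(\fg) = \prod_p U(\grfg)_p = \hU(\grfg)$.

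The only genuine obstacle is the commutation of completion with the filtration, i.e., the isomorphism $\hU(\fg)/\hat{F}^p \iso U(\fg)/F^p$. Everything else is a direct application of Proposition~\ref{pro:ExpLogNatgriso} and a degree-counting identification of the filtration on $U(\grfg)$ with the grading. I would resolve the obstacle by recalling the short exact sequence $0 \to \hat{F}^p \to \hU(\fg) \to U(\fg)/F^p \to 0$, which is a direct consequence of the definition of $\hU(\fg)$ as $\varprojlim U(\fg)/F^q$ and the fact that the transition maps $U(\fg)/F^{q+1} \twoheadrightarrow U(\fg)/F^q$ are surjective (so $\varprojlim^1$ vanishes and exactness is preserved).
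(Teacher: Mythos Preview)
Your proposal is correct and follows the same approach as the paper's proof, which is extremely terse: it simply writes $\gr^\Pi \hU(\fg) = \prod_{p\geq 0} F^p/F^{p+1}$, invokes Proposition~\ref{pro:ExpLogNatgriso} to identify $F^p/F^{p+1} \iso U(\grfg)_p$, and declares the result. You are supplying exactly the details the paper omits --- the identification $\hat{F}^p/\hat{F}^{p+1} \iso F^p/F^{p+1}$ via surjectivity of the transition maps, and the verification that on $U(\grfg)$ the filtration coincides with the degree filtration so that $\hU(\grfg) = \prod_d U(\grfg)_d$.

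One small caution on the latter point: your inductive step (``a product of at least two positive-degree factors lies in $F^p \cdot U(\grfg)_+$'') is slightly glib. For a PBW monomial $l_1\cdots l_k$ of total degree $\geq p+1$, the prefix $l_1\cdots l_{k-1}$ need not have degree $\geq p$ if $l_k$ has degree $\geq 2$; you may need to commute factors (picking up brackets landing in $(\grfg)^{p+1}$) or split at a different index. This is routine to fix and does not affect the overall argument.
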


\begin{proof}
The left graded product is 
\[ \gr^{\Pi} \hU(\fg) = \prod_{p \geq 0} F^p/F^{p+1}.\]
But by Proposition \ref{pro:ExpLogNatgriso} 
$F^p/F^{p+1} \iso U(\gr L)_p$ and so the above statement follows.
\end{proof}

\begin{example}
Let $V = \oplus_{i \geq 1} V_i$ 
be a graded vector space with $V_i$ of degree $i$,
and let $\Lie(V)$ be the free Lie algebra on $V$. It then has
a grading $\Lie(V) = \oplus_{d \geq 1} \Lie(V)_d$ coming from the grading on $V$,
and so a filtration $F^p = \oplus_{d \geq p} \Lie(V)_d$.
The enveloping algebra $U(\Lie(V))$ is the tensor algebra
$T(V)$.
The completed enveloping algebra is 
\[ \hU(\Lie(V)) = \hat{T}(V) := \prod_{d} T(V)_d. \]
\end{example}

Let $\fg_p$ be the quotient $\fg/\fg^{p+1}$, which is 
a nilpotent filtered Lie algebra. 
We get enveloping algebras $U(\fg_p)$ with filtrations
$F^jU(\fg_p)$ of ideals, and quotient algebras
\[U^j(\fg_p) = U(\fg_p)/F^{j+1}U(\fg_p). \]

\begin{lemma} \label{Lem:ExpLogLimjp}
\[  \hU(\fg) = \underset{j,p}{\varprojlim} \, U^j(\fg_p). \]
\end{lemma}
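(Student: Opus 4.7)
The plan is to reduce the double inverse limit to the diagonal subsystem $p=j$, where each term turns out to be canonically isomorphic to the truncation $U(\fg)/F^{j+1}U(\fg)$ that already defines $\hU(\fg)$.

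The key step is the claim that whenever $p \geq j$, the projection $\pi: U(\fg) \pils U(\fg_p)$ induces an isomorphism
\[ U(\fg)/F^{j+1}U(\fg) \overset{\iso}{\lpil} U^j(\fg_p). \]
To see this, note that the kernel of $\pi$ is the two-sided ideal $(\fg^{p+1}) \sus U(\fg)$, which by the very recursive definition of the filtration lies in $F^{p+1}U(\fg)$; since $p \geq j$ we have $F^{p+1}U(\fg) \sus F^{j+1}U(\fg)$, so $\ker \pi \sus F^{j+1}U(\fg)$. The composite $U(\fg) \pils U^j(\fg_p)$ is therefore surjective. For injectivity I must verify that the preimage of $F^{j+1}U(\fg_p)$ under $\pi$ equals $F^{j+1}U(\fg)$; this holds because $F^\bullet$ is defined by the same recursive formula in both enveloping algebras, and the generators $\fg^i$ with $i \leq j+1 \leq p+1$ surject onto the corresponding generators $(\fg_p)^i$.

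Once the claim is in hand, I observe that the diagonal $\{(p,p) : p \geq 1\}$ is cofinal in the product-ordered directed set of pairs $(j,p)$. Under the isomorphism of the claim, the transition map $U^{p+1}(\fg_{p+1}) \pil U^p(\fg_p)$ corresponds to the canonical quotient $U(\fg)/F^{p+2} \pils U(\fg)/F^{p+1}$. Passing to the limit along this cofinal subsystem gives
\[ \varprojlim_{j,p} U^j(\fg_p) \;=\; \varprojlim_p U^p(\fg_p) \;=\; \varprojlim_p U(\fg)/F^{p+1}U(\fg) \;=\; \hU(\fg). \]

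The main obstacle is the key claim; once it is established the rest is formal manipulation of cofinal subsystems. The substantive content of the claim is the twofold fact that the ideal-theoretic kernel $(\fg^{p+1})$ sits inside the filtration piece $F^{p+1}U(\fg)$ and that $\pi$ intertwines the two filtrations, both of which follow directly from the recursive definition of $F^\bullet$ given before the lemma.
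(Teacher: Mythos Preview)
Your proof is correct and follows the same overall strategy as the paper: reduce to the diagonal by cofinality, then establish the key isomorphism $U(\fg)/F^{p+1}U(\fg) \cong U^p(\fg_p)$. The difference lies in how that key isomorphism is proved. The paper verifies it by passing to associated graded pieces and invoking Proposition~\ref{pro:ExpLogNatgriso} (so ultimately the PBW-type isomorphism of Proposition~\ref{pro:ExpLogPBW}), comparing $(\gr U(\fg))_{\leq p}$ with $(\gr U(\fg_p))_{\leq p}$ via $U(\gr \fg)_{\leq p}$. Your argument is more elementary: you observe directly that $\ker\pi = (\fg^{p+1}) \subseteq F^{p+1}U(\fg) \subseteq F^{j+1}U(\fg)$ for surjectivity, and that $\pi$ carries $F^{j+1}U(\fg)$ onto $F^{j+1}U(\fg_p)$ (because the recursive generators surject), which together with the kernel inclusion gives injectivity. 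This bypasses the PBW machinery entirely. The paper's route has the advantage of reusing structural results already in place; yours has the advantage of being self-contained and showing that the lemma really only needs the recursive definition of the filtration, not the finer decomposition of $U(\fg)$.
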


\begin{proof}
First note that if $j \leq p$ then $U^p(L_p) \twoheadrightarrow U^j(L_p)$
surjects. If $j \geq p$, then $U^j(L_j) \twoheadrightarrow U^j(L_p)$
surjects. Hence it is enough to show that the natural map
\[ U(L)/F^{p+1} \pil U(L_p)/F^{p+1}U(L_p) = U^p(L_p) \]
is an isomorphism. This follows since we have an isomorphism
of associated graded vector spaces:
\begin{align*}
(\gr (U(L)/F^{p+1}))_{\leq p} & = (\gr U(L))_{\leq p} \iso U(\gr L)_{\leq p} \\
    & = U(\gr L_p)_{\leq p} \iso (\gr U(L_p))_{\leq p} \\
    & = (\gr U(L_p)/F^{p+1})_{\leq p}
\end{align*}
\end{proof}

\subsection{The exponential and logarithm}
\label{subsec:ExpLogMaps}

The coproduct $\Delta$ on $U(\fg)$ will send
\[ F^p \mto{\Delta} 1 \te F^p + F^1 \te F^{p-1} + \cdots + F^p \te 1. \]
Thus we get a map 
\[ \hU(\fg) \pil U(\fg)/F^{2p-1} \mto{\Delta} U(\fg)/F^p \te U(\fg)/F^p. \]
Let 
\[ \hU(\fg) \hte \hU(\fg) := \underset{p}\varprojlim \,
U(\fg)/F^p \te U(\fg)/F^p \]
be the completed tensor product
We then get a {\it completed coproduct}
\[ \hU(\fg) \mto{\Delta} \hU(\fg) \hte \hU(\fg). \] Note 
that the tensor product 
\[\hU(\fg) \te \hU(\fg) \sus \hU(\fg) \hte \hU(\fg).\]

An element $g$ of $\hU(\fg)$ is {\it grouplike} if $\Delta(g) = g \te g$
in $\hU(\fg) \te \hU(\fg)$. We denote the set of grouplike elements
by $G(\hU(\fg))$. They are all of the form $1 + s$ where $s$ is in 
the augmentation 
\[ \hU(\fg)_+ = \ker (\hU(\fg) \mto{\epsilon} \kr). \]

\medskip The exponential map 
\[ \hU(\fg)_+ \mto{exp} 1 + \hU(\fg)_+ \] is given by 
\[ \exp(x) = 1 + x + \frac{x^2}{2!} + \frac{x^3}{3!} + \cdots. \]

The logarithm map 
\[ 1 + \hU(\fg)_+ \mto{log} \hU(\fg)_+ \]
is defined by 
\[ \log(1+s) = s - \frac{s^2}{2} + \frac{s^3}{3} - \cdots . \]

\begin{proposition} \label{pro:ExpLogExpLog}
The maps
\[  \hU(\fg)_+ \bihom{\exp}{\log}  1 + \hU(\fg)_+ \]
give inverse bijections. They restrict to inverse bijections
\[ \hat{\fg} \bihom{\exp}{\log} G(\hU(\fg)) \]
between the completed Lie algebra and the grouplike elements. 
\end{proposition}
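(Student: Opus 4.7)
The plan is first to establish that $\exp$ and $\log$ are mutually inverse bijections between $\hU(\fg)_+$ and $1 + \hU(\fg)_+$, and then to verify that under these bijections $\hat{\fg}$ corresponds precisely to the grouplike elements. For the unrestricted bijection, I would observe that $\hU(\fg)_+$ coincides with the completed ideal $\hat{F}^1$, so $(\hat{F}^1)^p \sus \hat{F}^p$ and both series converge in the complete filtered algebra. The formal power series identities $\exp \circ \log = \id$ and $\log \circ \exp = \id$ then hold term by term, yielding the bijection.

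For the easy restriction, let $x \in \hat{\fg}$. By construction of $\hat{\Delta}$ one has $\hat{\Delta}(x) = 1 \te x + x \te 1$, and these two summands always commute in $\hU(\fg) \hte \hU(\fg)$ since $(a \te 1)(1 \te b) = a \te b = (1 \te b)(a \te 1)$. Because $\hat{\Delta}$ is a continuous algebra homomorphism, it commutes with the exponential series, so $\hat{\Delta}(\exp x) = \exp(1 \te x) \cdot \exp(x \te 1) = (1 \te \exp x)(\exp x \te 1) = \exp x \te \exp x$, whence $\exp(x)$ is grouplike.

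For the reverse direction, let $g = 1+s$ be grouplike. Since $1 \te g$ and $g \te 1$ commute and their product is $\hat{\Delta}(g) = g \te g$, applying $\log$ yields $\hat{\Delta}(\log g) = \log(1 \te g) + \log(g \te 1) = 1 \te \log g + \log g \te 1$, so $\log(g)$ is primitive in $\hU(\fg)$. It then remains to identify the primitive elements of $\hU(\fg)$ with $\hat{\fg}$. I would pass through the PBW coalgebra isomorphism $\alpha \colon U(\fg) \isopil \Sym^c(\fg)$ of Proposition \ref{pro:ExpLogPBW}, verify that it respects the filtrations (the argument is parallel to the proof of Proposition \ref{pro:ExpLogNatgriso}) and thus extends to an isomorphism of completed coalgebras $\hU(\fg) \iso \hat{\Sym}^c(\fg)$. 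In $\Sym^c(\fg)$ a primitive element is forced into degree one, namely $\Sym_1^c(\fg) = \fg$; passing to the completion identifies the primitive part with $\hat{\fg}$.

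The main obstacle I anticipate is this last identification of primitives with $\hat{\fg}$ in the completed setting, since the primitivity condition lives in the completed tensor product $\hU(\fg) \hte \hU(\fg)$ rather than in the algebraic one, and one must check carefully that the PBW isomorphism is filtration-preserving in both directions before extending it to completions. Once this extension is in place, the identification of primitives reduces to a trivial degree computation in the symmetric coalgebra, and the remaining bookkeeping is purely formal.
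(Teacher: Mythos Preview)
Your proposal is correct and follows essentially the same approach as the paper: formal inversion of the two series, a direct check that $\exp$ sends primitives to grouplikes via $\hat\Delta(\exp x)=\exp(1\te x)\exp(x\te 1)$, and the reverse direction using that $1\te g$ and $g\te 1$ commute together with the compatibility of $\hat\Delta$ with $\log$. The paper's own proof simply asserts the final implication---that a primitive element of $\hU(\fg)$ lies in $\hat\fg$---without further comment, whereas you supply this via the filtered PBW isomorphism and a degree argument in $\Sym^c$; the ``main obstacle'' you flag is thus a genuine point the paper leaves implicit, and your way of handling it is the natural one given the material already developed in Section~1.
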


\begin{proof}
That $\log(\exp(x))$ and $\exp(\log(1+s)) = 1+s$, are formal 
manipulations. If $\ell \in \hat{\fg}$ it is again a formal
manipulation that 
\[\Delta(\exp(\ell)) = \exp(\ell) \cdot \exp(\ell), \]
and so $\exp(\ell)$ is a grouplike element.

The maps $\exp$ and $\log$ can also be defined on the tensor products 
and give inverse
bijections
\[ \hU(\fg)_+ \hte \hU(\fg) + \hU(\fg) \hte \hU(\fg)_+ 
\bihom{\exp}{\log}  1 \te 1  + 
 \hU(\fg)_+ \hte \hU(\fg) +
\hU(\fg) \hte \hU(\fg)_+.
 \]
Now let $s \in G(\hU(\fg))$ be a grouplike element. Since 
$\Delta = \Delta_{\hU(L)}$
is an algebra homomorphism
\[ \exp(\Delta(\log(s))) = \Delta(\exp(\log s)) = \Delta(s) = s \te s. \]
Since $1 \te s$ and $s \te 1$ are commuting elements we also have
\[ \exp(\log(s) \te 1 + 1 \te \log(s)) = (\exp(\log(s)) \te 1) \cdot
(1 \te \exp(\log(s))) = s \te s.\]
Taking logarithms of these two equations, we obtain
\[ \Delta(\log(s)) = \log(s) \te 1 + 1 \te \log(s), \]
and so $\log(s)$ is in $\hat{\fg}$. 
\end{proof}

\section{Exponentials and logarithms for pre- and post-Lie 
algebras}  \label{sec:PrePo}

For pre- and post-Lie algebras their enveloping algebra comes with
two products $\bullet$ and $*$. This gives two possible exponential
and logarithm maps. This is precisely the setting that enables us
to define a map from formal vector fields to formal flows. It also
gives the general setting for defining the Butcher product.

\subsection{Filtered pre- and post-Lie algebras}
Given a linear binary operation on a $\kr$-vector space $A$
\[ * : A \te_\kr A \pil A \]
the associator is defined as:
\[ a_*(x,y,z) = x*(y*z) - (x*y)*z. \]

\begin{definition}  \label{def:PrePoRel}
A {\it post-Lie algebra} $(\Po, [,],\rhd)$ is a Lie algebra 
$(\Po,[,])$ together with a linear binary map 
$\rhd$ such that
\begin{itemize}
\item $x \rhd [y,z] = [x\rhd y,z] + [x, x\rhd z]$
\item $[x,y] \rhd z = a_{\rhd}(x,y,z) - a_{\rhd}(y,x,z)$
\end{itemize}
\end{definition}
It is then straightforward to verify that the following
bracket 
\[ \lbra x,y \rbra = x \rhd y  - y \rhd x + [x,y] \]
defines another Lie algebra structure on $\Po$.

A {\it pre-Lie} algebra is a post-Lie algebra $\Po$ such that bracket
$[,]$ is zero, so $\Po$ with this bracket is the abelian Lie algebra.

\medskip
We now assume that $\Po$ is a filtered post-Lie algebra: We have
a decreasing filtration
\[ \Po = \Po^1 \supseteq \Po^2 \supseteq \cdots , \]
such that 
\[ [\Po^p, \Po^q] \sus \Po^{p+q}, \quad
  \Po^p \rhd \Po^q \sus \Po^{p+q}, \]
Then we will also have $\lbra \Po^p, \Po^q \rbra \sus \Po^{p+q}$.
If $u$ and $v$ are two elements of $\Po$ such that $u-v \in \Po^{n+1}$,
we say they are equal up to order $n$.

Again examples of this can be constructed for any post-Lie algebra
over a field $k$ by letting $\Po^1 = \Po$ and  
\[ \Po^{p+1} := \Po^p \rhd \Po + \Po \rhd \Po^p + [\Po,\Po^p]. \]
Alternatively we may form the polynomials 
$P[h] = \oplus_{n \geq 1} Ph^n$,
or the power
series $P[[h]] = \Pi_{n \geq 1} Ph^n$.

\medskip In \cite{MK-L-EF:Env} the enveloping algebra $U(\Po)$ of the
post-Lie algebra was introduced. It is both the enveloping algebra
for the Lie algebra $[,]$ and as such comes with associative product
$\bullet$, and is the enveloping algebra for the Lie algebra $\lbra, \rbra$ and
as such comes with associative product $*$. The triangle product
also extends to a product $\rhd$ on $U(\Po)$ but this is not
associative.

\subsection{The map from fields to flows}

\begin{example}
Butcher and Lie-Butcher series
\end{example}

LB-series are elements of a complete filtered  post-Lie algebra $P$. B-series are the special case where $P$ is pre-Lie. 
Using the perspective of these series there are
several natural ways to think of these objects.
\begin{itemize}
\item The elements of $\Po$ may be thought of as formal vector fields,
in which case we write $\Po_{field}$.
\item The grouplike elements of $\hU(\Po)$ may be thought of
 as formal flows.
\item The elements of $\Po$ may be thought of as principal
parts of formal flows, see below, in which case we write $\Po_{flow}$.
\end{itemize}

Let us explain how these are related. In the rest of this subsection
we assume that $P = \hat{P}$ is complete with respect to the
filtration.
The exponential map
\[ \Po_{field} \mto{\exp^*} \hU(\Po) \]
sends a vector field to a formal flow, a grouplike element
in $\hU(\Po)$, which in the example above is a Lie-Butcher series.
(Note that the notion of a grouplike element in $\hU(\Po)$ only 
depends on the shuffle coproduct.)

We may take the logarithm
\[ G(\hU(\Po)) \mto{\log^{\bullet}} \Po.\]
So if $B \in G(\hU(\Po))$ we get $b = \log^{\bullet}(B)$.
We think of $b$ also as a formal flow, the {\it principal part} or 
{\it first order part} of the formal flow $B$. It determines $B$ by
$B = \exp^{\bullet}(b)$. 

When $\Po$ is a pre-Lie algebra $\Pre$, then $\hU(\Po)$ is the
completed symmetric algebra $\widehat{\Sym}(\Pre)$ and 
$\log^{\bullet}$ is simply the projection
$\widehat{\Sym}(\Pre) \pil \Pre$. If $B$ is a Butcher series parametrized
by forests, then  $b$ is the Butcher series parametrized by trees.
Thus $b$ determines the flow, but the  full series $B$ is necessary
to compute pull-backs of functions along the flow.

\medskip
We thus get a bijection
\begin{equation} \label{eq:PoPreVF}
\Phi : \Po_{field} \xrightarrow{\log^{\bullet} \circ \exp^*} \Po_{flow} 
\end{equation}
which maps vector fields to principal part flows. This map is closely related to the \emph{Magnus expansion}~\cite{ebrahimi2011twisted}. 
Magnus expresses the exact flow as $\exp^*(tv) = \exp^\bullet(\Phi(tv))$, from which a differential equation for $\Phi(tv)$ can be derived.

\begin{example} \label{exa:PrePoFlow} Consider the manifold $\RR^n$ and 
let $\cX \RR^n$ be the vector fields on $\RR^n$.
Let $f = \sum_{i \geq 0} f_ih^i$ on $\RR^n$
be a power series of  vector fields where
each $f_i \in \cX \RR^n$. 
It induces the flow series $\exp^*(hf)$ in 
$\hat{U}(\cX \RR^n  \lbra h \rbra)$. Since $\cX \RR^n$ is a
pre-Lie algebra, the completed
enveloping algebra is $\widehat{\Sym}(\cX \RR^n  \lbra h \rbra)$.
Thus the series
\[ \exp^*(hf) = 1 + \sum_{i \geq d \geq 1} F_{i,d}h^i \]
where the $F_{i,d} \in \Sym_d(\cX \RR^n \lbra h \rbra)$ are $d$'th
order differential operators. (Note that the principal part $b$ is the 
$d=1$ part.)
It determines a flow $\Psi^f_h: \RR^n \pil \RR^n$
sending a point $P$ to $P(h)$. For any smooth function $\phi: \RR^n \pil  \RR$
the {\it pullback} of $\phi$ along the flow is the composition
$\phi \circ \Psi^f_h : \RR^n \pil \RR$ and is given by 
\[ \exp^*(hf)\phi = 1 + \sum_{i \geq d \geq 1} F_{i,d}(\phi) h^i,  \]
see \cite[Section 4.1]{MK-L:Formal} or \cite[Section 2.1]{MK-F}.
In particular when $\phi$ is a coordinate function $x_p$ we get the
coordinate $x_p(h)$ of $P(h)$ as given by 
\[ x_p(h) = \exp^*(hf)x_p = 
\sum_{i \geq d \geq 1} F_{i,d} x_p h^i = x_p + \sum_{i} F_{i,1} x_p h^i \]
since higher derivatives of $x_p$ vanish. This shows concretely 
geometrically why the flow is determined by its principal part.
\end{example}

For a given principal flow $b \in \Po_{flow}$
computing its inverse image by the map 
\eqref{eq:PoPreVF} above, which is the vector field 
$\log^* \circ \exp^{\bullet}(b)$ is called {\it backward error} in 
numerical analysis~\cite{hairer1994backward,Ma:Survey}.

%

For $a,a^\prime \in \Po_{field}$ let 
\[ a \plstar a^\prime = \log^*(\exp^*(a) * \exp^*(a^\prime)), \]
a product which is computed using the BCH formula for the
Lie algebra $\lbra, \rbra$. With this product $\Po_{field}$
becomes a pro-unipotent group. Transporting this product
to $\Po_{flow}$ using the bijection $\Phi$ in (\ref{eq:PoPreVF}), 
we get for $b, b^\prime \in \Po_{flow}$ a product
\[ b\, \sharp \, b^\prime 
= \log^{\bullet}(\exp^{\bullet}(b) * \exp^{\bullet}(b^\prime)), \]
the {\it composition} product for principal flows. 

\begin{example} We continue Example \ref{exa:PrePoFlow}
Let $g = \sum_{i \geq 0} g_i h^i$ be another power series of vector fields,
$\exp^*(hg)$ its flow series, and $\Psi_h^g : \RR^n \pil \RR^n$ 
the flow it determines.
Let $c$ be the principal part of $\exp^*(hg)$.
The composition of the flows $\Psi^g_h \circ \Psi^f_h$ is the flow 
sending $\phi$ to 
\[ \exp^*(hg) (\exp^*(hf) \phi) = (\exp^*(hg) * \exp^*(hf)) \phi. \]
 The principal part of the composed flow is 
\[ \log^\bullet(\exp^*(hg) * \exp^*(hf)) = \log^\bullet(\exp^\bullet(c)
* \exp^\bullet(b)) = c \, \sharp \, b, \]
the Butcher product of $c$ and $b$.
\end{example}

Denote by $\plcdot$ the product in $\Po_{flow}$ given by the
BCH-formula for the Lie bracket 
$[,]$, 
\[ x\, \plcdot \, y := \log^{\bullet}(\exp^{\bullet}(x) \bullet \exp^{\bullet}(y)).  \]
\begin{proposition}  \label{pro:PoPrexy}
For $x,y$ in the post-Lie algebra $\Po_{flow} $
we have
\[ x \, \sharp \, y = x \plcdot \left(\exp^{\bullet}(x) \rhd y\right). \]
\end{proposition}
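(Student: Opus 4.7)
The plan is to transport the identity from the post-Lie algebra up to the completed enveloping algebra, exploit how the two products $\bullet$ and $*$ on $\hU(\Po)$ are related through $\rhd$, and then bring the identity back down via $\log^\bullet$.

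The key ingredient is the fundamental relation in the enveloping algebra of a post-Lie algebra (from \cite{MK-L-EF:Env}) expressing the $*$-product in terms of the $\bullet$-product and the extended triangle action:
\[ A * B \;=\; A_{(1)} \bullet (A_{(2)} \rhd B), \]
where $\Delta(A) = A_{(1)} \otimes A_{(2)}$ is the shuffle coproduct on $\hU(\Po)$. First I would specialize this to $A = \exp^\bullet(x)$ and $B = \exp^\bullet(y)$. Since $x \in \hat{\Po}$, Proposition~\ref{pro:ExpLogExpLog} tells us $\exp^\bullet(x)$ is a grouplike element of $\hU(\Po)$ (for the shuffle coproduct), so $\Delta(\exp^\bullet(x)) = \exp^\bullet(x) \otimes \exp^\bullet(x)$, and the relation collapses to
\[ \exp^\bullet(x) * \exp^\bullet(y) \;=\; \exp^\bullet(x) \bullet \bigl(\exp^\bullet(x) \rhd \exp^\bullet(y)\bigr). \]

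The second step is to verify that
\[ \exp^\bullet(x) \rhd \exp^\bullet(y) \;=\; \exp^\bullet\bigl(\exp^\bullet(x) \rhd y\bigr). \]
Here the compatibility $A \rhd (B \bullet C) = (A_{(1)} \rhd B) \bullet (A_{(2)} \rhd C)$ for the extended action, combined again with grouplikeness of $A = \exp^\bullet(x)$, shows that $\exp^\bullet(x) \rhd (-)$ is a $\bullet$-algebra endomorphism of $\hU(\Po)$. Hence it commutes with the formal power series defining $\exp^\bullet$, provided we know that $\exp^\bullet(x) \rhd y$ lies back in $\hat{\Po}$. For this one checks that $\exp^\bullet(x) \rhd (-)$ preserves primitivity: if $y$ is primitive, then by the compatibility above and grouplikeness, $\Delta(\exp^\bullet(x)\rhd y) = 1 \otimes (\exp^\bullet(x)\rhd y) + (\exp^\bullet(x)\rhd y) \otimes 1$, so by Proposition~\ref{pro:ExpLogExpLog} this element lies in $\hat{\Po}$.

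Combining the two displays, $\exp^\bullet(x) * \exp^\bullet(y) = \exp^\bullet(x) \bullet \exp^\bullet(\exp^\bullet(x) \rhd y)$. Taking $\log^\bullet$ of both sides and using the definitions of $\sharp$ and $\plcdot$ gives the claim:
\[ x \,\sharp\, y \;=\; \log^\bullet\!\bigl(\exp^\bullet(x) \bullet \exp^\bullet(\exp^\bullet(x) \rhd y)\bigr) \;=\; x \plcdot \bigl(\exp^\bullet(x) \rhd y\bigr). \]

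The main obstacle I anticipate is the second step: carefully justifying that the extended action of the grouplike element $\exp^\bullet(x)$ on the completed enveloping algebra is both a $\bullet$-algebra endomorphism and stabilizes the completed Lie subalgebra $\hat{\Po}$, so that the identity $\exp^\bullet(x) \rhd \exp^\bullet(y) = \exp^\bullet(\exp^\bullet(x) \rhd y)$ is legitimate. Everything else is formal manipulation once the axioms for $\rhd$ on $U(\Po)$ and the BCH-based definitions of $\sharp$ and $\plcdot$ are in place.
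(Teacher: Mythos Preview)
Your proposal is correct and follows essentially the same route as the paper: both use the identity $A*B = A_{(1)}\bullet(A_{(2)}\rhd B)$ from \cite{MK-L-EF:Env} together with grouplikeness of $\exp^\bullet(x)$, then the compatibility $A\rhd(B\bullet C) = (A_{(1)}\rhd B)\bullet(A_{(2)}\rhd C)$ to pull $\exp^\bullet(x)\rhd(-)$ through the exponential, and finally apply $\log^\bullet$. Your additional verification that $\exp^\bullet(x)\rhd y$ remains primitive (hence lies in $\hat{\Po}$, so that the expression $x\plcdot(\exp^\bullet(x)\rhd y)$ is meaningful) is a point the paper leaves implicit but which is indeed needed.
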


\begin{proof}
From~ \cite[Prop.3.3]{MK-L-EF:Env} 
the product $A * B = \sum_{\Delta(A)} A_{(1)}(A_{(2)} \rhd B)$.
Since $\exp^\bullet (x)$ is a group-like element 
it follows that: 
\[ \exp^\bullet(x) * \exp^\bullet(y) = \exp^\bullet(x)\bullet\left(\exp^\bullet(x)\rhd\exp^\bullet(y)\right). \]
By \cite[Prop.3.1]{MK-L-EF:Env} 
$A \rhd BC = \sum_{\Delta(A)} (A_{(1)} \rhd B)(A_{(2)} \rhd C)$
and so again using that $\exp^\bullet(x)$ is group-like  and the expansion
of $\exp^\bullet(y)$: 
\[ \exp^\bullet(x)\rhd\exp^\bullet(y) = \exp^\bullet\left(\exp^\bullet(x)\rhd y\right). \]
Hence
\[x \, \sharp \, y = \log^\bullet\left(\exp^\bullet(x)\bullet\left(\exp^\bullet(x)\rhd\exp^\bullet(y)\right)\right) =
\log^\bullet\left(\exp^\bullet(x)\bullet\exp^\bullet(\exp^\bullet(x)\rhd y)\right). 
\]
\end{proof}

In the pre-Lie case $[,]=0$, therefore $\plcdot = +$ and we obtain the formula
derived in~\cite{PLT-EFP}
\[  x \, \sharp \, y = x + \exp^{\bullet}(x) \rhd y.\]

\subsection{Substitution}
Let $\End_{\postlie}(P) = \Hom_{\postlie}(P,P)$ be the endomorphisms
of $P$ as a post-Lie algebra. (In the special case that 
$P$ is a pre-Lie algebra, this is simply the endomorphisms of 
$P$ as a pre-Lie algebra.)
It is a monoid, but not generally a vector space. It acts
on the the post-Lie algebra $P$. 

Since the action respects the brackets $[,],\, \lbra,\rbra \, $ and $\rhd$,
it also acts on the enveloping algebra $U(P)$ and its completion 
$\hU(P)$, and respects the products $*$ and $\bullet$. Hence the
exponential maps $\exp^*$ and $\exp^\bullet$ are equivariant for this
action, and similarly the logarithms $\log^*$ and $\log^\bullet$. 
So the formal flow map
\[ \Phi : P_{\field} \lpil P_{\flow} \]
is equivariant for the action. The action on $P_\flow$ (which is 
technically the same as the action on $P_\field$), is called {\it substitution}
and is usually studied in a more specific context, as we do in 
Section \ref{sec:Action}. An element $\phi \in \End_{\postlie}(P)$ comes
from sending a field $f$ to a perturbed field $f^\prime$, and one
then sees how this affects the exact flow or approximate flow maps given by numerical algorithms. 

\part{The algebraic geometric setting}
In this part we have certain finiteness assumptions on the Lie algebras
and pre- and post-Lie algebras, and so may consider them and binary
operations on them in the setting of varieties. 

\section{Affine varieties and group actions}

We assume the reader is familiar with basic algebraic
geometry of varieties and morphisms, like presented in 
\cite[Chap.1]{Ha} or \cite[Chap.1,5]{CLO}. 
We nevertheless briefly recall basic notions. A notable and not
so standard feature
is that we in the last subsection define infinite dimensional
varieties and morphisms between them.

\subsection{Basics on affine varieties} \label{subsec:VarBasics}
Let $\kk$ be a field and $S = \kk[x_1, \ldots, x_n]$ the polynomial ring.
The {\it affine $n$-space} is 
\[ \AA_\kk^n = \{ (a_1, \ldots, a_n) \, | \, a_i \in k \}. \] 
An ideal $I \sus S$ defines and {\it affine variety} in $\AA_\kk^n$:
\[ X = \cZ(I) = \{ p \in \AA_\kk^n \, | \, f(p) = 0, \text{ for } f \in I \}. \]
Given an affine variety $X \sus \AA_\kk^n$, its associated ideal
is 
\[ \cI(X)  = \{ f \in S \, | \, f(p) = 0, \text{ for } p \in X \}. \]
Note that if $X = \cZ(I)$ then $I \sus \cI(X)$, 
and $\cI(X)$ is the largest ideal defining the variety $X$.
The correspondence 
\[ \mbox{ideals in } \kk[x_1, \ldots, x_n]  \bihom{\cZ}{\cI}
\mbox{ subsets of }  \AA_\kk^n \]
is a Galois connection. Thus we get a one-to-one correspondence
\[ \mbox{ image of } \cI \xleftrightarrow{ 1 - 1} 
\underset{ = \mbox{ varieties in } \AA_\kk^n}{\mbox{ image of }\cZ}. 
\]

\begin{remark} When the field $\kk$ is algebraically closed, 
Hilbert's Nullstellensatz says that the image of $\cI$ is precisely
the radical ideals in the polynomial ring. In general however
the image of $\cI$ is only contained in the radical ideals.
\end{remark}

The {\it coordinate ring} of a variety $X$ is the ring 
$A(X) = \kk[x_1, \ldots, x_n]/\cI(X)$. 
A morphism of affine varieties $f : X \pil Y$ where $X \sus \AA_\kk^n$ 
and $Y \sus \AA_\kk^m$ is a 
a map sending a point $\bfa = (a_1, \ldots, a_n)$ to a point 
$(f_1(\bfa), \ldots, f_m(\bfa))$ where the $f_i$ are polynomials
in $S$. This gives rise to a homomorphism of coordinate rings
\begin{align*} f^\rf : A(Y) & \lpil A(X) \\
\overline{y_i} & \lpil f_i(\overline{\bfx}), \quad i = 1, \ldots, m
\end{align*}
In fact this is a one-one correspondence:
\[ \{ \text{morphisms } f:X \pil Y\} \quad \xleftrightarrow{ 1 - 1 } 
\quad \{ \text{algebra homomorphisms } f^\rf : A(Y) \pil A(X)\}.\]

The zero-dimensional affine space $\AA_\kk^0$
is simply a point, and its coordinate ring is $k$.
Therefore to give a point $p \in \AA_\kk^n$ is equivalent
to give an algebra homomorphism
$\kk[x_1, \ldots, x_n] \pil \kk$. 

\begin{remark} \label{rem:VarBasicsK}
We may replace $\kk$ by any commutative ring $\kkr$. The
affine space $\AA_\kkr^n$ is then $\kkr^n$. The coordinate ring of this
affine space is $\kkr[x_1, \ldots, x_n]$. A point $p \in \AA_\kkr^n$
still corresponds to an algebra homomorphism $\kkr[x_1, \ldots, x_n] \pil \kkr$.
Varieties in $\AA_\kkr^n$ may be defined in the same way, and there
is still a Galois connection between ideals in 
$\kkr[x_1, \ldots, x_n]$ and subsets of $\AA_\kkr^n$, and a
one-one correspondence between morphisms of varieties
and coordinate rings.
\end{remark}

\medskip
The affine space $\AA_\kk^n$ comes with the {\it Zariski topology}, whose
closed sets are the affine varieties in $\AA_\kk^n$ and whose open
sets are the complements of these. This induces also the Zariski topology
on any affine subvariety $X$ in $\AA_\kk^n$. 

\medskip 
If $X$ and $Y$ are affine varieties in $\AA^n_\kk$ and $\AA^m_\kk$ respectively, 
their product $X \times Y$ is 
an affine variety in $\AA^{n+m}_\kk$ whose ideal is the ideal in 
$\kk[x_1, \ldots,x_n, y_1, \ldots, y_m]$ generated by $\cI(X) + \cI(Y)$.
Its coordinate ring is 
\[ A(X \times Y) = A(X) \te_\kk A(Y). \]

\medskip
If $A$ is a ring and $f \neq 0$ in $A$, we have the localized ring 
$A_f$ whose elements are all $a/f^n$ where $a \in A$. Two
such elements $a/f^n$ and $b/f^m$ are equal if $f^k(f^ma - f^nb) = 0$
for some $k$. If $A$ is an integral domain, this is equivalent to
$f^ma - f^nb = 0$. Note that the localization $A_f$ is isomorphic to the
quotient ring $A[x]/(xf-1)$. Hence if $A$ is a finitely generated $k$-
algebra, $A_f$ is also a finitely generated $k$-algebra. A consequence of this
is the following: Let $X$ be an affine variety in $\AA_\kk^n$ whose ideal is 
$I = \cI(X)$ contained in $\kk[x_1, \ldots, x_n]$, 
and let $f$ be a polynomial function.
The open subset 
\[D(f) = \{ p \in X \, | \, f(p) \neq 0 \} \sus X \] is then 
in bijection to the variety
$X^\prime \in \AA_\kk^{n+1}$ defined by the ideal $I + (x_{n+1}f-1)$.
This bijection is actually a homeomorphism in the Zariski topology.
The coordinate ring 
\[ A(X^\prime) = A(X)[x_{n+1}]/(x_{n+1}f-1) \iso A(X)_f. \]
Hence we identify $A_f$ as the coordinate ring of $D(f)$ and
can consider $D(f)$ as an affine variety. Henceforth we shall drop the adjective
affine for a variety, since all our varieties will be affine.

\subsection{Coordinate free descriptions of varieties}
For flexibility of argument, it may desirable to consider varieties in
a coordinate free context. 

Let $V$ and $W$ be dual finite dimensional vector spaces. 
So $V = \Hom_\kk(W,\kk) = W^*$, and then
$W$ is naturally isomorphic to $V^* = (W^*)^*$. We consider $V$ as
an affine space (this means that we are forgetting the structure
of vector space on $V$). Its coordinate ring is the symmetric algebra
$\Sym(W)$. Note that any polynomial
$f \in \Sym(W)$ may be evaluated on any point $\bfv \in  V$, 
since $\bfv : W \pil \kk$ gives maps $\Sym_d(W) \pil \Sym_d(\kk) = \kk$ and
thereby a map $\Sym(W) = \oplus_d \Sym_d(W) \pil \kk$.   

Given an ideal $I$
in $\Sym(W)$, the associated affine variety is 
\[ X = \{ \bfv \in V \, | \,  f(\bfv) = 0, \text{ for } f \in I \} \sus V. \]
Given a variety $X \sus V$ we associate the ideal 
\[ \cI(X) = \{ f \in \Sym(W) \, | \, f(\bfv) = 0, \text{ for } \bfv \in X \} 
\sus \Sym(W). \]
The coordinate ring of $X$ is $A(X) = \Sym(W)/\cI(X)$.

\medskip
Let $W^1$ and $W^2$ be two vector spaces, with 
dual spaces $V^1$ and $V^2$. 
A map $f : X^1 \pil X^2$ between varieties in these spaces is a 
map which is given by polynomials once a coordinate system is fixed
for $V^1$ and $V^2$. Such a map then gives a 
a homomorphism of coordinate rings
$f^\rf: \Sym(W^2)/I(X^2) \pil \Sym(W^1)/I(X^1)$,
and this gives a one-one correspondence between morphisms 
$f$ between $X^1$ and $X^2$ and 
algebra homomorphisms $f^\rf$ between their coordinate rings.

\subsection{Affine spaces and monoid actions}
The vector space of linear operators on $V$ is denoted $\End(V)$.
It is an affine space 
with $\End(V) \iso \AA_\kk^{n \times n}$, and with coordinate ring
$\Sym(\End(V)^*)$. 
We then have an action 
\begin{align} \label{eq:AffineEV} \End(V) \times V & \pil V \\ \notag
(\phi,v) & \mapsto \phi(v).
\end{align}
This is a morphism of varieties. Explicitly, if $V$ has basis 
$e_1, \ldots, e_n$ an element in $\End(V)$ may be represented by a
matrix $A$ and the map is given by:
\[ (A,(v_1, \ldots, v_n)^t) \mapsto A \cdot (v_1, \ldots, v_n)^t, \]
which is given by polynomials.


The morphism of varieties (\ref{eq:AffineEV})
then corresponds to the algebra homomorphism on coordinate rings
\[ \Sym(V^*) \pil \Sym(\End(V)^*) \te_\kk \Sym(V^*). \]
With a basis for $V$, the coordinate ring $\Sym(\End(V)^*)$ is
isomorphic to the polynomial  ring $\kk[t_{ij}]_{i,j = 1, \ldots,n}$,
where the $t_{ij}$ are coordinate functions on $\End(V)$, and
the coordinate ring $\Sym(V^*)$ is isomorphic to
$k [x_1, \ldots, x_n]$ where the $x_i$ are coordinate functions on
$V$. The map above on coordinate rings is then given by
\[ x_i \mapsto \sum_{j} t_{ij}x_j. \]

We may also consider the set $\GL(V) \sus \End(V)$ of invertible
linear operators. This is the open subset $D(\det(\phi))$ of 
$\End(V)$ defined by the nonvanishing of the determinant. 
Hence, fixing a basis of $V$, its coordinate ring
is the localized ring $\kk[t_{ij}]_{\det((t_{ij}))}$, by the
last part of Subsection \ref{subsec:VarBasics}.  The set $\SL(V) \sus \End(V)$
are the linear operators with determinant $1$. This is a closed
subset of $\End(V)$ defined by the polynomial equation $\det((t_{ij})) -1 = 0$.
Hence the coordinate ring of $\SL(V)$ is the quotient ring
$\kk[t_{ij}]/(\det((t_{ij})) -1)$.

\medskip
Now given an affine monoid variety $M$, that is an affine variety with 
a product morphism $\mu : M \times M \pil M$ which is associative.
Then we get an algebra homomorphism of coordinate rings
\[ A(M) \mto{\Delta} A(M) \te_\kk A(M).\]
Since the following diagram commutes
\[ \begin{CD} M \times M \times M @>{\mu \times \bfe}>> M \times M \\
@V{\bfe \times \mu}VV @VV{\mu}V \\
M \times M @>{\mu}>> M,
\end{CD} \] 
we get a commutative diagram of coordinate rings:
\[ \begin{CD} A(M) \te_\kk A(M) \te_\kk A(M) @<<{\Delta \te \bfe}< A(M) 
\te A(M) \\
@A{\bfe \te \Delta}AA @AA{\Delta}A \\
A(M) \te_\kk A(M) @<{\Delta}<< A(M).
\end{CD} \] 
The zero-dimensional affine space $\AA_\kk^0$
is simply a point, and its coordinate ring is $k$.
A character on $A(M)$ is an algebra homomorphism $A(M) \pil \kk$.
On varieties this gives a morphism $P = \AA_\kk^0 \pil M$, or 
a point in the monoid variety. In particular the unit in $M$
corresponds to a character $A(M) \mto{\epsilon} k$, the counit. Thus the
algebra $A(M)$ with $\Delta$ and $\epsilon$ becomes a bialgebra.


\medskip
The monoid may act on a variety $X$ via a morphism of varieties
\begin{equation} \label{eq:AffineMV} M \times X \pil X. 
\end{equation}
On coordinate rings we get a homomorphism of algebras,
\begin{equation} \label{eq:AffineCoM} A(X) \pil A(M) \te_\kk A(X),
\end{equation}
making $A(X)$ into a comodule algebra over the bialgebra
$A(M)$.

In coordinate systems the morphism (\ref{eq:AffineMV}) may
be written: 
\[ (m_1, \ldots, m_r) \times (x_1, \ldots, x_n) \mapsto
(f_1(\bfm, \bfx), f_2(\bfm,\bfx), \ldots ) . \]
If $X$ is an affine space $V$ and the action comes from a morphism
of monoid varieties $M \pil \End(V)$, the action by $M$ is linear
on $V$. Then
$f_i(\bfm, \bfv) = \sum_j f_{ij}(\bfm) v_j$.
The homomorphism on coordinate rings (recall that $V = W^*$)
\[ \Sym(W) \pil A(M) \te_\kk \Sym(W)\]
is then induced from a morphism
\begin{align*} 
W & \pil A(M) \te_\kk W  \\
x_j & \mapsto \sum_i f_{ij}(\mathbf{u}) \te_\kk x_i
\end{align*}
where the $x_j$'s are the coordinate functions on $V$ and $\mathbf{u}$
are the coordinate functions on $M$. 

\medskip
We can also consider an affine group variety $G$ with a morphism
$G \pil GL(V)$ and get a group action 
$G \times V \pil V$. The inverse morphism for the group, induces
an antipode on the coordinate ring $A(G)$ making it
a commutative Hopf algebra.

\subsection{Infinite dimensional affine varieties and monoid actions}
\label{subsec:Infinite}

The infinite dimensional affine space $\AA^\infty_\kk$ is
$\prod_{i \geq 1} \kk$. Its elements are infinite sequences
$(a_1, a_2, \ldots)$ where the $a_i$ are in $k$. 
Its coordinate ring is the polynomial ring in infinitely many
variables $S = \kk[x_i, i \in \NN]$. 

An ideal $I$ in $S$, defines an affine variety 
\[ X = V(I) = \{ \bfa \in \AA^\infty_\kk \, | \, f(\bfa)  = 0, \text{ for } 
f \in I \}.\]
Note that a polynomial $f$ in $S$ always involves only a finite
number of the variables, so the evaluation $f(\bfa)$ is meaningful.
Given an affine variety $X$, let its ideal be:
\[ \cI(X) = \{ f \in S \, | \, f(\bfa) = 0 \text{ for } \bfa \in X \}. \]
The coordinate ring $A(X)$ of $X$ is the quotient ring 
$S/\cI(X)$. The affine subvarieties of $\AA_\kk^\infty$ form
the closed subsets in 
the Zariski topology on $\AA_\kk^\infty$, and this then induces
the Zariski topology on any subvariety of $\AA_\kk^\infty$.

\medskip
A morphism $f: X \pil Y$ of two varieties, is a map such that
$f(\bfa) = (f_1(\bfa), f_2(\bfa), \ldots )$ where each $f_i$
is a polynomial function (and so involves only a finite number of the 
coordinates of $\bfa$)

Letting $\kk[y_i, i \in \NN]$ be the coordinate ring of affine space where
$Y$ lives, we get a morphism of coordinate rings
\begin{align*}
f^\rf : A(Y) & \pil A(X) \\
        \overline{y_i} & \mapsto f_i(\overline{\bfx})
\end{align*}
This gives a one-one correspondence
\[ \{ \text{morphisms } f:X \pil Y\}  \quad \leftrightarrow 
\quad \{ \text{algebra homomorphisms } f^\rf : A(Y) \pil A(X) \}.\]

\medskip
For flexibility of argument, it is desirable to have a coordinate free
definition of these varieties also. The following includes then both the
finite and infinite-dimensional case in a coordinate free way.

Let $W$ be a vector space with a countable basis. We get the
symmetric algebra $\Sym(W)$. Let $V = \Hom_\kk(W,\kk)$ be the dual 
vector space, which will be our affine space.
Given an ideal $I$
in $\Sym(W)$, the associated affine variety is 
\[ X = V(I) = \{ \bfv \in V \, | \,  f(\bfv) = 0, \text{ for } f \in I \}. \]
Given a variety $X$ we associate the ideal 
\[ \cI(X) = \{ f \in \Sym(W) \, | \, f(\bfv) = 0, \text{ for } \bfv \in X \}. \]
Its coordinate ring is $A(X) = \Sym(W)/\cI(X)$.
We shall shortly define morphism between varieties. In order for these to
be given by polynomial maps, we will need filtrations
on our vector spaces. Given a filtration by finite dimensional
vector spaces
\[ \langle 0 \rangle = W_0 \sus W_1 \sus W_2 \sus \cdots \sus W.\]
On the dual space $V$ we get a decreasing filtration by
$V^i = \ker ( (W)^* \pil (W_{i-1})^*)$.
The affine variety $V/V^i \iso (W_{i-1})^*$ has coordinate ring
$\Sym(W_{i-1})$.
If $X$ is a variety in $V$ its image $X_i$ in the finite affine space
$V/V^i$ need not be Zariski closed. Let $\overline{X_i}$ be its closure.
This is an affine variety in $V/V^i$ whose ideal is 
$\cI(X) \cap \Sym(W_{i-1})$.

\medskip
A map $f : X_1 \pil X_2$ between varieties in these spaces is a {\it morphism
of varieties} if there are decreasing filtrations 
\[ V_1 = V_1^1 \supseteq V_1^2 \supseteq \cdots, \quad V_2 = V_2^1 \supseteq
V_2^2 \supseteq \cdots\]
with finite dimensional quotient spaces, 
such that for any $i$ we have a commutative diagram
\[ \begin{CD} X_1 @>f>> X_2 \\
            @VVV @VVV \\
            \overline{X_{1,i}} @>>> \overline{X_{2,i}}
\end{CD} \]
and the lower map is a morphism between varieties
in $V_1/V_1^i$ and $V_2/V_2^i$.

We then get a homomorphisms of coordinate rings
\begin{equation} \label{eq:VarFprimi} 
f^{\rf}_i : \Sym(W^2_i)/\cI(X_{2,i}) \pil \Sym(W^1_i)/\cI(X_{1,i}), 
\end{equation}
and the direct limit of these gives a homomorphism of coordinate rings
\begin{equation} \label{eq:VarFprim}
f^{\rf} : \Sym(W^2)/\cI(X_2) \pil \Sym(W^1)/\cI(X_1). 
\end{equation}
Conversely given an algebra homomorphism $f^\sharp$ above. Let
\[  W_1^2 \sus W_2^2 \sus W_3^2 \sus \cdots \]
be a filtration. Write $W^1 = \oplus_{i \in \NN} kw_i$ in terms
of a basis. The image of $W_i^2$ will involve only a finite number
of the $w_i$. Let $W_i^1$ be the f.d. subvector space generated by these $w_i$.
Then we get maps \eqref{eq:VarFprimi}, giving morphisms
\[ \begin{CD} \overline{X_{1,i+1}} @>>> \overline{X_{2,i+1}} \\
            @VVV @VVV \\
            \overline{X_{1,i}} @>>> \overline{X_{2,i}}.
\end{CD} \]
In the limit we then get a morphism of varieties $f: X_1 \pil X_2$.
This gives a one-one correspondence between morphisms of
varieties $f: X_1 \pil X_2$ and algebra homomorphisms $f^\rf$.

\medskip
If $X^1$ and $X^2$ be varieties in the affine spaces $V^1$ and $V^2$. 
Their product $X^1 \times X^2$ is
a variety in the affine space $V^1 \times V^2$ which is the dual space
of $W^1 \oplus W^2$. Its  coordinate ring is 
$A(X^1 \times X^2) = A(X^1) \te_\kk A(X^2)$. 

If $M$ is an affine monoid variety (possibly infinite dimensional) 
its coordinate ring $A(M)$ becomes a commutative bialgebra. 
If $M$ is an affine group variety, then $A(M)$ is a Hopf algebra.
We can again further consider
an action on the affine space
\[ M \times V \pil V. \]
It correspond to a homomorphism of coordinate rings
\[ \Sym(W) \pil A(M) \te_\kk \Sym(W),\] making
$\Sym(W)$ into a comodule algebra over $A(M)$.
If the action by $M$ is linear on $V$, the algebra homomorphism
above is induced by a linear map
$W \pil A(M) \te_\kk W$. 

\section{Filtered algebras with finite dimensional  quotients}

In this section we assume the quotients $L_p = L/L^{p+1}$ are finite
dimensional vector spaces. This enables us to define the
dual Hopf algebra $U^c(K)$ of the enveloping algebra $U(L)$. 
This Hopf algebra naturally identifies as the coordinate ring
of the completed Lie algebra $\hat{L}$. In Subsection \ref{subsec:FinFilBCH}
the Baker-Campbell-Haussdorff product on the variety $L$ is shown to 
correspond to the natural coproduct on the dual Hopf algebra $U^c(K)$. In 
the last Subsection \ref{subsec:FinFilButcher}
the Lie-Butcher product on a post-Lie algebra is also shown to correspond
to the natural coproduct on the dual Hopf algebra.

\subsection{Filtered Lie algebras with finite dimensional quotients}
Recall that $\fg_p$ is the quotient $\fg/\fg^{p+1}$. 
The setting in this
section is $\kk$ is a field of characteristic zero, and 
that these quotients $\fg_p$ are finite dimensional as $\kk$-vector
spaces. We assume that $L$ is complete with respect to this cofiltration, so
we have the inverse limit
\[ L = \hat{L} = \underset{p} \varprojlim L_p. \]

The dual $\fc^p = \Hom_\kk(\fg_p,\kk)$ is a finite dimensional Lie
coalgebra. Let $\fc = \underset{p}{\varinjlim}\, \fc^p$ be the direct limit.
Recall that the quotient algebra 
\[ U^j(\fg_p) = U(\fg_p)/F^{j+1}U(\fg_p). \]
The dual $U^j(\fg_p)^*$ is a finite dimensional coalgebra $U^c_{j}(\fc^p)$, 
and we have inclusions
\[ \begin{CD} U^c_j(\fc^p) @>{\sus}>> U^c_{j+1}(\fc^p) \\
@V{{\sus}}VV @VV{\sus}V \\
U^c_j(\fc^{p+1}) @>{\sus}>> U^c_{j+1}(K^{p+1}). 
\end{CD} \]
We have the direct limits 
\[ U^c(\fc^p) := \underset{j}{\varinjlim}\,  U^c_j(\fc^p), 
\quad U^c(\fc) := \underset{j,p}{\varinjlim} U^c_j(\fc^p). \]

\begin{lemma} 
$U^c(K)$ is a sub Hopf algebra of $T^c(K)$ with the shuffle product.
\end{lemma}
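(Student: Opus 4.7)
The plan is to exhibit $U^c(K)$ as the graded dual of the cocommutative Hopf algebra $U(L)$, realized inside $T^c(K)$ via the standard duality between the two natural Hopf algebra structures on a tensor algebra.

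First, I equip $T(L)$ with its concatenation product together with the cocommutative coproduct for which every $\ell\in L$ is primitive; with these structures $T(L)$ becomes a Hopf algebra. The inclusion $L\injpil U(L)$ extends by the universal property of the tensor algebra to an algebra surjection $\pi\colon T(L)\surpil U(L)$, and since both coproducts are algebra morphisms agreeing on $L$, the map $\pi$ is in fact a surjection of Hopf algebras. Next I pass to finite-dimensional quotients: the filtration $F^\bullet$ on $U(L)$ lifts to a compatible filtration on $T(L)$, and quotienting by $L^{p+1}$ is compatible too, so at each level $(j,p)$ we get a surjection of finite-dimensional Hopf algebras $T^j(L_p)\surpil U^j(L_p)$. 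Dualizing (which is exact in finite dimensions) yields injections of finite-dimensional Hopf algebras $U^c_j(K^p)\injpil T^c_j(K^p)$. Here I invoke the classical fact that the graded dual of $T(V)$ equipped with concatenation product and the coproduct making $V$ primitive is $T^c(V^*)$ equipped with shuffle product and deconcatenation coproduct; this ensures that after dualization the product on $T^c(K)$ is indeed the shuffle.

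Finally, I take the direct limit over $j$ and $p$. The injections $U^c_j(K^p)\injpil T^c_j(K^p)$ are compatible with the bonding maps on both sides, so they assemble into an injection $U^c(K)\injpil T^c(K)$ preserving the shuffle product, the deconcatenation coproduct, the unit and counit, and (by dualizing the antipode of $U(L)$) the antipode. This exhibits $U^c(K)$ as a sub Hopf algebra of $T^c(K)$, as desired.

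The main technical obstacle will be verifying that the filtration $F^\bullet$ on $U(L)$ genuinely arises under $\pi$ from the analogous filtration on $T(L)$, so that the finite-dimensional dualizations glue coherently in the direct limit and the bonding maps on the $U^c$-side really are the restrictions of those on the $T^c$-side. Once this compatibility is in place, the claim reduces to the well-known graded duality between the two tensor Hopf algebra structures on $T(V)$, applied to $V=L_p$.
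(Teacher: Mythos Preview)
Your overall strategy --- dualize the surjection $T(L_p)\twoheadrightarrow U(L_p)$ at finite filtration levels and take the colimit --- is exactly the paper's approach. However, there is a genuine error in your execution: the finite quotients $U^j(L_p)$ and $T^j(L_p)$ are \emph{not} Hopf algebras, so you cannot speak of ``a surjection of finite-dimensional Hopf algebras $T^j(L_p)\twoheadrightarrow U^j(L_p)$''. The coproduct does not descend modulo $F^{j+1}$. Concretely, $\Delta(F^{j+1})\subseteq\sum_{a+b\geq j+1}F^a\otimes F^b$, and this sum is not contained in $F^{j+1}\otimes U+U\otimes F^{j+1}$; for instance, with $L$ one-dimensional abelian generated by $x$ one has $\Delta(x^2)=x^2\otimes 1+2x\otimes x+1\otimes x^2$, and the cross term survives in $U^1\otimes U^1$ while $x^2\in F^2$ maps to zero in $U^1$. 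So at the finite level you only have quotient \emph{algebras}, hence dually only sub\emph{coalgebras} $U^c_j(K^p)\hookrightarrow T^c_j(K^p)$.

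The paper repairs this precisely: it observes that although the coproduct does not descend to $U^j(L_p)$, there is a well-defined map
\[
U^{2j-1}(L_p)\longrightarrow U^j(L_p)\otimes U^j(L_p)
\]
(project $U(L_p)$ to $U^{2j-1}$, apply $\Delta$, project each factor to $U^j$) compatible with the shuffle coproduct on $T(L_p)$. Dualizing this gives a multiplication $U^c_j(K^p)\otimes U^c_j(K^p)\to U^c_{2j-1}(K^p)$ that agrees with the restriction of the shuffle product on $T^c(K)$. Only after passing to the colimit over $j$ and $p$ do the subcoalgebra and subalgebra structures assemble into a genuine sub Hopf algebra. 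Your argument can be fixed by making exactly this adjustment: treat the algebra and coalgebra structures separately at finite level, with the shifted index $2j-1$ for the multiplication, and assemble them in the limit.
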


\begin{proof}
$U^j(L_p)$ is a quotient algebra of $T(L_P)$ and $T(L)$, and so 
$U_j^c(K_p)$ is a subcoalgebra of $T^c(K_p)$ and $T^c(K)$. 
The coproduct on $U(L_p)$, the shuffle coproduct,
does not descend to a coproduct on $U^j(L_p)$.
But we have a well defined co-map
\[ U^{2j-1}(L_p) \pil U^j(L_p) \te U^j(L_p) \]
compatible with the shuffle coproduct on $T(L_p)$. 
Dualizing this we get 
\[ U^c_j(K_p) \te U^c_j(K_p) \pil U^c_{2j-1}(K_p) \]
and taking colimits, we get $U^c(K)$ as a subalgebra of $T^c(K)$
with respect to the shuffle product.
\end{proof}

\begin{proposition} \label{pro:FinFilDual} There are isomorphisms
\begin{itemize}
\item[a.] $\fg \iso  \Hom_\kk(\fc,\kk)$ of Lie algebras,
\item[b.] $\hU(\fg) \iso \Hom_\kk(U^c(\fc),\kk)$ of algebras.
\item[c.] The coproduct on $U^c(K)$ is dual to the completed product
on $\hU(L)$
\[ U^c(K) \mto{\Delta_\bullet} U^c(K) \te U^c(K), \quad
\hU(L) \hte \hU(L) \mto{\bullet} \hU(L). \]
\end{itemize}
\end{proposition}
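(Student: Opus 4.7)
My plan is to deduce all three statements from the basic duality of finite-dimensional vector spaces combined with the interchange of $\Hom_\kk(-,\kk)$ with filtered colimits. The setup already provides $\fc^p = \Hom_\kk(\fg_p,\kk)$ with $\fg_p$ finite-dimensional, so double duality immediately yields $\fg_p \iso (\fc^p)^*$; this is the whole linear ingredient. The Lie and algebra structures will then follow by checking compatibility with the (co)operations at each finite stage $(j,p)$, where everything reduces to ordinary duality of finite-dimensional coalgebras and algebras.

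For part (a), I would apply $\Hom_\kk(-,\kk)$ to the direct system $\fc = \varinjlim_p \fc^p$ to obtain
\[
\Hom_\kk(\fc,\kk) \;=\; \varprojlim_p \Hom_\kk(\fc^p,\kk) \;=\; \varprojlim_p \fg_p \;=\; \hat{\fg} \;=\; \fg,
\]
the last equality by the completeness hypothesis of this subsection. The Lie algebra identification then reduces to observing that the bracket on each $\fg_p$ dualizes to a Lie cobracket on $\fc^p$, that these are compatible with the direct system since the projections $\fg_{p+1}\twoheadrightarrow \fg_p$ are Lie maps, and that the bracket induced on $\Hom_\kk(\fc,\kk)$ by dualizing the colimit cobracket agrees with the natural inverse-limit bracket on $\hat\fg$.

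For part (b), the same argument runs one categorical level up. Each $U^j(\fg_p)$ is finite-dimensional because its associated graded with respect to $F^\bullet$ consists of subquotients of $\Sym^i(\fg_p)$, so $U^j(\fg_p)\iso U^c_j(\fc^p)^*$. Taking $\Hom_\kk(-,\kk)$ of $U^c(\fc) = \varinjlim_{j,p} U^c_j(\fc^p)$ therefore gives
\[
\Hom_\kk(U^c(\fc),\kk) \;=\; \varprojlim_{j,p} U^j(\fg_p) \;=\; \hU(\fg),
\]
where the last equality is Lemma \ref{Lem:ExpLogLimjp}. To promote this to an algebra isomorphism and simultaneously establish (c), I would observe that $\bullet$ preserves the filtration (since $F^i \bullet F^j \sus F^{i+j}$), hence descends to well-defined products $U^j(\fg_p) \te U^j(\fg_p) \pil U^j(\fg_p)$; dualizing yields comaps $U^c_j(\fc^p) \pil U^c_j(\fc^p) \te U^c_j(\fc^p)$, and the colimit over $j,p$ is precisely $\Delta_\bullet$. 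Conversely, applying $\Hom_\kk(-,\kk)$ to $\Delta_\bullet$ lands in
\[
\Hom_\kk\bigl(U^c(\fc) \te U^c(\fc),\kk\bigr) \;=\; \varprojlim_{j,p}\bigl( U^j(\fg_p) \te U^j(\fg_p)\bigr) \;=\; \hU(\fg) \hte \hU(\fg),
\]
which is exactly the target of the completed product on $\hU(\fg)$, giving (c).

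I expect the main obstacle to be bookkeeping rather than anything conceptual: verifying that $\Hom_\kk(-,\kk)$ sends direct limits of finite-dimensional spaces to inverse limits, that finite tensor products distribute over these colimits, and that the \emph{completed} tensor product $\hte$ is the correct target for the dualized coproduct. All of this works cleanly here because the transition maps in the inverse systems are surjective (so Mittag-Leffler holds automatically), and because everything at each finite stage $(j,p)$ is a finite-dimensional vector space, so one avoids the topological-algebra subtleties that would otherwise arise when dualizing in infinite dimensions.
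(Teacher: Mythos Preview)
Your proposal is correct and follows essentially the same line as the paper's proof: both rely on the categorical fact that $\Hom_\kk(\varinjlim F,\kk)\cong \varprojlim \Hom_\kk(F(-),\kk)$, together with double duality at each finite stage $(j,p)$ and the identification $\hU(\fg)=\varprojlim_{j,p}U^j(\fg_p)$ from Lemma~\ref{Lem:ExpLogLimjp}. Your treatment is in fact slightly more explicit than the paper's about why the Lie and algebra structures match, and about why the product descends to each $U^j(\fg_p)$, but the underlying argument is the same.
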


\begin{proof}
a. Since $\fg$ is the completion of the $\fg^p$, 
it is clear that there is a map of Lie algebras 
$\Hom_\kk(\fc,\kk) \pil \fg$. We need only show that this is an isomorphism
of vector spaces.

It is a general fact that for any object $N$ in a category
$\cC$ and any indexed diagram $F : J \pil \cC$ then
\[ \Hom(\varinjlim F, N) \iso \varprojlim \Hom(F(-),N). \]
Applying this to the category of $\kk$-vector spaces enriched in 
$\kk$-vector spaces (meaning that the Hom-sets are $\kk$-vector spaces),
we get
\[ \Hom_\kk(K,\kk) = \Hom_{k}(\varinjlim K^p,L) 
= \varprojlim \Hom(K^p,\kk) = \varprojlim L^p = \hat{L}.\]
b. This follows as in b. above.

c. This follows again by the above. Since tensor products
commute with colimits we have
\[ U^c(K) \te U^c(K) = \underset{p,j}\varinjlim U^c_j(K^p) \te U^c_j(K^p). \]
Then 
\begin{align*}  \Hom_\kk(U^c(K) \te U^c(K), \kk) = &  \Hom_\kk(\varinjlim 
U^c_j(K^p) \te U^c_j(K^p), \kk) \\
 = & \underset{p,j}\varprojlim U^j(L^p) \te U^j(L^p)
= \hU(L) \hte \hU(L). 
\end{align*}


\end{proof}

\medskip
The coalgebra $U^c(\fc)$ is a Hopf algebra with the shuffle
product. It has unit $\eta$ and counit $\epsilon$. 
Denote by $\star$ the convolution product on this Hopf algebra, and
by $\ben$ the identity map. Write $\ben = \eta \circ \epsilon + J$. The Euler
idempotent 
\[ e : U^c(\fc) \pil U^c(\fc) \]
is the convolution logarithm
\[ e = \log^{\star}(\ben) = \log^{\star}(\eta \circ \epsilon + J) 
= J  - 
J^{\star 2}/2 + J^{\star 3}/3
- \cdots . \]

\begin{proposition} \label{pro:FinFilUtilK}
The image of $U^c(\fc) \mto{e} U^c(\fc)$ is $\fc$. This inclusion
of $\fc \sus U^c(\fc)$ is a section of the natural map $U^c(\fc) \pil \fc$. 
\end{proposition}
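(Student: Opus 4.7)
My strategy is to obtain this by transposing Proposition~\ref{pro:ExpLogEuler}, applied to the completion $\hU(\fg)$, across the duality of Proposition~\ref{pro:FinFilDual}. Recall that under that pairing, $\fg = \Hom_\kk(\fc,\kk)$ and $\hU(\fg) = \Hom_\kk(U^c(\fc),\kk)$; moreover the natural coproduct of $U^c(\fc)$ is dual to the $\bullet$-product on $\hU(\fg)$ by part (c) of that proposition, while the shuffle product on $U^c(\fc)$ is by construction dual to the (cocommutative) coproduct on $\hU(\fg)$ making $\fg$ primitive.

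First I would verify that this duality swaps the two convolution products: for $f,g \in \End_\kk(U^c(\fc))$ with transposes $f^{*}, g^{*} \in \End_\kk(\hU(\fg))$, a direct computation with the pairing gives $(f \star g)^{*} = f^{*} \star g^{*}$, where on the left $\star$ uses the shuffle product and the natural coproduct of $U^c(\fc)$, and on the right $\star$ uses $\bullet$ and the coproduct of $\hU(\fg)$. Since also $\ben^{*} = \ben$ and $(\eta \circ \epsilon)^{*} = \eta \circ \epsilon$, expanding $e = J - J^{\star 2}/2 + J^{\star 3}/3 - \cdots$ term by term yields $e_{U^c(\fc)} = (e_{\hU(\fg)})^{*}$.

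Next I would apply Proposition~\ref{pro:ExpLogEuler} to each of the nilpotent quotients $\fg_p$, pass through the inverse limit of Lemma~\ref{Lem:ExpLogLimjp} (using the filtration to guarantee convergence of the convolution sum), and conclude that $e_{\hU(\fg)}$ is an idempotent projection factoring as $\hU(\fg) \mto{\,p\,} \fg \mto{\,i\,} \hU(\fg)$ with $p \circ i = \id_\fg$, where $i$ is the natural inclusion of primitives. Transposing via the identifications above, $e_{U^c(\fc)}$ factors as $U^c(\fc) \mto{i^{*}} \fc \mto{p^{*}} U^c(\fc)$, where $i^{*}$ is precisely the natural surjection $U^c(\fc) \twoheadrightarrow \fc$ (obtained by dualizing $\fg_p \hookrightarrow U^j(\fg_p)$ at each finite level and passing to the direct limit), and $p^{*}$ is injective. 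Hence the image of $e_{U^c(\fc)}$ is $p^{*}(\fc) \subseteq U^c(\fc)$, identified with $\fc$ via $p^{*}$, and the identity $i^{*} \circ p^{*} = (p \circ i)^{*} = \id_\fc$ says exactly that this inclusion is a section of the natural surjection.

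The main obstacle will be bookkeeping rather than real content: one must verify that the transpose of the inclusion $\fg \hookrightarrow \hU(\fg)$ really coincides with the natural surjection $U^c(\fc) \twoheadrightarrow \fc$ used implicitly in the statement, and that the direct limit defining $U^c(\fc) = \varinjlim U^c_j(\fc^p)$ and the inverse limit defining $\hU(\fg)$ commute with transposition as needed for the convolution series to match. Both become automatic once one works inside the finite-dimensional piece $U^j(\fg_p)$, where the pairing is perfect and everything is classical, and then passes compatibly to the limits.
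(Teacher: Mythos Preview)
Your argument is correct, but it takes a different route from the paper. The paper's proof is a single line: ``This follows the same argument as Proposition~\ref{pro:ExpLogEuler}'', meaning one reruns the Reutenauer-style proof of the canonical decomposition directly on the Hopf algebra $U^c(\fc)$ (which is commutative with the shuffle product, so the Euler idempotent theory applies to it just as it does in the cocommutative case $U(L)$). You instead deduce the statement by \emph{transposing} the already-established result for $\hU(\fg)$ through the duality of Proposition~\ref{pro:FinFilDual}: you check that $e_{U^c(\fc)} = (e_{\hU(\fg)})^{*}$ by matching the convolution structures, and then transpose the factorisation $\hU(\fg) \twoheadrightarrow \fg \hookrightarrow \hU(\fg)$. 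This buys you a genuinely self-contained argument that uses only what the paper has proved so far, without appealing a second time to the external reference~\cite{Re}; it also makes transparent \emph{why} the two Euler idempotents match, which the paper leaves implicit. The cost is that you must track the finite-level duality $U^j(\fg_p) \leftrightarrow U^c_j(\fc^p)$ and the limit passages carefully, whereas the paper's direct approach avoids this bookkeeping entirely. Both are sound; yours is arguably more in keeping with the dual viewpoint the section is developing.
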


\begin{proof}
This follows the same argument as Proposition 
\ref{pro:ExpLogEuler}.
\end{proof}

This gives a map 
$\fc \pil U^c(\fc)$. Since $U^c(\fc)$ is a commutative algebra under
the shuffle product, we get a map from the free commutative algebra
$\Sym(\fc) \pil U^c(\fc)$.

\begin{proposition} \label{pro:FinFilPBW}
This map 
\begin{equation} \label{eq:FinFilPsi} 
\psi : \Sym(\fc) \mto{\iso} U^c(\fc) 
\end{equation}
is an isomorphism of commutative algebras.
\end{proposition}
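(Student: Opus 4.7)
The plan is to dualize the statement and reduce it to the strong PBW theorem of Proposition \ref{pro:ExpLogPBW}.

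First, a small preliminary: the map $\psi$ is well-defined as an algebra map because Proposition \ref{pro:FinFilUtilK} provides a linear section $\fc \inpil U^c(\fc)$ of the projection $U^c(\fc) \pils \fc$, and because $U^c(\fc)$ is commutative under the shuffle product the universal property of $\Sym(\fc)$ as the free commutative algebra on $\fc$ extends this section uniquely to an algebra homomorphism. The substance of the proposition is therefore that $\psi$ is bijective.

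Next I would reduce to the finite-dimensional nilpotent case. All of the constructions (the coalgebra $\fc$, the Hopf algebra $U^c(\fc)$, the symmetric algebra $\Sym(\fc)$, and the Euler section) commute with the filtered colimits $\fc = \varinjlim \fc^p$, $U^c(\fc) = \varinjlim U^c(\fc^p)$, $\Sym(\fc) = \varinjlim \Sym(\fc^p)$. Hence $\psi = \varinjlim \psi_p$ where $\psi_p : \Sym(\fc^p) \pil U^c(\fc^p)$, and it suffices to show each $\psi_p$ is an isomorphism.

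Now I would dualize. Since $\fc^p$ is finite dimensional with dual $\fg_p$, Proposition \ref{pro:FinFilDual}(b) identifies $U^c(\fc^p)^* = \hU(\fg_p)$, and the natural pairing identifies $\Sym(\fc^p)^* = \widehat{\Sym^c(L_p)} := \prod_n \Sym_n^c(L_p)$. Dualizing $\psi_p$ gives a coalgebra homomorphism $\psi_p^* : \hU(L_p) \pil \widehat{\Sym^c(L_p)}$. Because the restriction $\psi_p|_{\fc^p}$ is the Euler section from Proposition \ref{pro:FinFilUtilK}, its dual is precisely the Euler idempotent $e : \hU(L_p) \pil L_p$ of Proposition \ref{pro:ExpLogEuler}. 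The universal property of $\Sym^c(L_p)$ as the free cocommutative coalgebra on $L_p$ then forces $\psi_p^*$ to equal the completion $\hat \alpha_p$ of the map $\alpha_p : U(L_p) \pil \Sym^c(L_p)$ of \eqref{eq:ExpLogAlfa}. Proposition \ref{pro:ExpLogPBW} asserts $\alpha_p$ is a coalgebra isomorphism; combined with Proposition \ref{pro:ExpLogNatgriso} and Lemma \ref{lem:ExpLogSymgr}, it is also filtration-preserving, hence $\hat\alpha_p$ is an isomorphism of completed vector spaces. Therefore $\psi_p^*$ is an isomorphism, and since $\Sym(\fc^p)$ and $U^c(\fc^p)$ are graded with finite-dimensional pieces (by symmetric/shuffle-word length, cofinally bounded by the nilpotency class) the double-dual recovers $\psi_p$ itself, so $\psi_p$ is an isomorphism.

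The main obstacle I anticipate is the bookkeeping in the third paragraph: namely verifying cleanly that the dual of $\psi_p$ coincides with the completion of $\alpha_p$, since both are characterized by universal properties on different sides of a nested set of dualities (linear dual of a direct limit versus inverse limit of duals, Hopf-algebra duality between shuffle and concatenation products, and duality between commutative-algebra and cocommutative-coalgebra universal properties). Once the identification $\psi_p^* = \hat\alpha_p$ is established the result is immediate.
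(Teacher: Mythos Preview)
Your proposal is correct and follows essentially the same strategy as the paper: reduce to the finite quotients, dualize, and invoke the PBW isomorphism $\alpha$ of Proposition~\ref{pro:ExpLogPBW}. The only difference is bookkeeping: the paper stays at the doubly-finite level throughout, dualizing the finite-dimensional filtered pieces $U^j(L_p)\cong \Sym^{c,j}(L_p)$ directly to obtain $\Sym_j(K^p)\cong U^c_j(K^p)$ and then taking the colimit over both $j$ and $p$, whereas you reduce only in $p$ and then pass through the completion $\hU(L_p)$ and a double-dual argument. The paper's route is slightly cleaner since it never leaves the finite-dimensional world (so no completion or double-dual recovery is needed), but your identification $\psi_p^* = \hat\alpha_p$ via the two universal properties is a perfectly valid alternative and the obstacle you flag is real but surmountable exactly as you describe.
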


\begin{proof}
By Proposition \ref{pro:ExpLogPBW} there is an isomorphism of coalgebras
\[ U(L_p) \mto{\iso} \Sym^c(L_p) \]
and the filtrations on these coalgebras correspond. Hence
we get an isomorphism
\[ U^j(L_p) \mto{\iso} \Sym^{c,j}(L_p). \]
Dualizing this we get
\[ \Sym_j(K^p) \mto{\iso} U^c_j(K^p). \]
Taking the colimits of this we get the statement.
\end{proof}

\medskip
In $\Hom_\kk(U^c(\fc),\kk)$ there are two distinguished subsets.
The {\it characters} are the algebra homomorphisms 
$\Hom_{Alg}(U^c(\fc),\kk)$. Via the isomorphism of Proposition 
\ref{pro:FinFilDual}
it corresponds to the grouplike elements of $\hU(\fg)$.
The {\it infinitesimal characters}
are the linear maps $\alpha : U^c(\fc) \pil \kk$ such that 
\[ \alpha(uv) = \epsilon(u) \alpha(v) + \alpha(u) \epsilon(v). \]
We denote these as $\Hom_{Inf}(U^c(\fc),\kk)$. 

\begin{lemma} \label{lem:FinFilChar}
Via the isomorphism in Proposition \ref{pro:FinFilDual}b.
These characters correspond naturally to the following:
\begin{itemize}
\item[a.] $\Hom_{Inf}(U^c(\fc),\kk) \iso \Hom_\kk(\fc,\kk) \iso \fg$.
\item[b.] $\Hom_{Alg}(U^c(\fc),\kk) \iso G(\hU(\fg))$.
\end{itemize}
\end{lemma}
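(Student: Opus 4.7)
The plan is to leverage the duality from Proposition \ref{pro:FinFilDual}: we have an algebra isomorphism $\hU(\fg) \iso \Hom_\kk(U^c(\fc),\kk)$, with the coproduct on $U^c(\fc)$ dual to the product $\bullet$ on $\hU(\fg)$. Since $U^c(\fc)$ is the graded dual Hopf algebra of $\hU(\fg)$, symmetrically the shuffle product on $U^c(\fc)$ is dual to the coproduct $\Delta$ on $\hU(\fg)$. Under the pairing, the unit $1 \in \hU(\fg)$ pairs with elements of $U^c(\fc)$ via the counit $\epsilon$. Both parts of the lemma will then come from reading off what the defining conditions of characters and infinitesimal characters translate into on the $\hU(\fg)$-side.

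For part (b), let $g \in \hU(\fg)$ correspond to $\alpha \in \Hom_\kk(U^c(\fc),\kk)$. The condition that $\alpha$ be an algebra homomorphism for the shuffle product reads $\alpha(u \cdot v) = \alpha(u)\alpha(v)$; pairing both sides against elements of $U^c(\fc) \te U^c(\fc)$ and using the duality between shuffle product and $\Delta$ yields $\Delta(g) = g \te g$. Thus $\alpha \in \Hom_{Alg}(U^c(\fc),\kk)$ if and only if $g \in G(\hU(\fg))$.

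For part (a), the same dualization turns the infinitesimal character condition $\alpha(u \cdot v) = \epsilon(u)\alpha(v) + \alpha(u)\epsilon(v)$ into $\Delta(x) = 1 \te x + x \te 1$, where $x \in \hU(\fg)$ corresponds to $\alpha$. So infinitesimal characters biject with primitives of $\hU(\fg)$, and these are exactly $\hat{\fg} = \fg$ by Proposition \ref{pro:ExpLogExpLog} (the logarithms of grouplike elements) or equivalently by the strong PBW theorem (Proposition \ref{pro:ExpLogPBW}), which identifies primitives in an enveloping algebra with the underlying Lie algebra. The second isomorphism $\Hom_{Inf}(U^c(\fc),\kk) \iso \Hom_\kk(\fc,\kk)$ is obtained by restriction along the inclusion $\fc \sus U^c(\fc)$ from Proposition \ref{pro:FinFilUtilK}; surjectivity and injectivity of this restriction follow from the PBW isomorphism $\Sym(\fc) \iso U^c(\fc)$ of Proposition \ref{pro:FinFilPBW}, since a derivation from a symmetric algebra into the ground field (with module structure via the augmentation) is determined by and can be freely prescribed on the generators.

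The main technical care is in the dualization step: one needs to verify that the conditions really correspond as claimed, which requires the completed tensor product $\hU(\fg) \hte \hU(\fg) = \Hom_\kk(U^c(\fc) \te U^c(\fc),\kk)$ identification spelled out in the proof of Proposition \ref{pro:FinFilDual}(c), and the fact that the unit of $\hU(\fg)$ pairs with $U^c(\fc)$ as the counit $\epsilon$. Once these are in place, both parts are essentially formal consequences of the Hopf-algebraic self-duality.
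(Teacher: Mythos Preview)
Your argument is correct, and for part (b) it coincides with the paper's: both translate the character condition into $\Delta(s)=s\te s$ via the duality of Proposition~\ref{pro:FinFilDual}(c), i.e.\ via the commutative square expressing that the shuffle product on $U^c(\fc)$ is dual to the completed coproduct on $\hU(\fg)$.

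For part (a) you give two arguments. Your second one---restricting an infinitesimal character along $\fc\hookrightarrow U^c(\fc)$ and invoking $U^c(\fc)\iso\Sym(\fc)$ so that $\epsilon$-derivations are determined freely on generators---is exactly the paper's approach, just phrased from the other side: the paper uses composition with the retraction $U^c(\fc)\to\fc$ of Proposition~\ref{pro:FinFilUtilK} and observes that its kernel $\kk\oplus U^c(\fc)_+^{\shu 2}$ is annihilated by any infinitesimal character. Your first argument (dualize to identify infinitesimal characters with primitives of $\hU(\fg)$, then primitives${}=\hat\fg$) is a genuine alternative and is arguably more symmetric with your treatment of~(b); however, your citation of Proposition~\ref{pro:ExpLogExpLog} for ``primitives${}=\hat\fg$'' is off, since that proposition concerns the $\exp/\log$ bijection with grouplike elements, not primitives. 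The PBW justification you also give (Proposition~\ref{pro:ExpLogPBW}) is the right one, though strictly speaking it identifies primitives in $U(\fg)$, and one should remark that the identification passes to the completion because $\hU(\fg)\iso\varprojlim U^j(\fg_p)$ and primitivity is detected at each finite stage.
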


\begin{proof}
a. The map $U^c(K) \mto{\phi} K$ from Proposition \ref{pro:FinFilUtilK}
has kernel $\kk \oplus U^c(K)_+^{\shu 2}$. We then see that any linear
map $K \pil \kk$ induces by composition an infinitesimal character on
$U^c(K)$. Conversely given an infinitesimal character $\alpha : U^c(K) \pil \kk$
then both $\kk$ and $U^c(K)_+^{\shu 2}$ are seen to be in the kernel,
and so such a map is induced from a linear map $K \pil \kk$ by composition
with $\phi$. 

b. That $s :U^c(K) \pil \kk$ is an algebra homomorphism is equivalent
to the commutativity of the diagram
\begin{equation} \label{eq:FinFilUKAlg}
 \xymatrix{  U^c(K) \te U^c(K) \ar[r] \ar[d]^{s \te s} & U^c(K) \ar[d]^{s} \\
   \kk \te_\kk \kk \ar[r] & \kk }.
\end{equation}
But this means that by the map 
\begin{align*} \hU(L) & \pil \hU(L) \hte \hU(L)  \\
s & \mapsto s \te s.
\end{align*}
Conversely given a grouplike element $s \in \hU(L)$, it corresponds
by Proposition \ref{pro:FinFilDual} b. to $s : U^c(K) \pil \kk$, and it being
grouplike means precisely that the diagram \eqref{eq:FinFilUKAlg} commutes.
\end{proof}

On $\Hom_\kk(U^c(\fc),\kk)$ we also have the convolution product, 
which we again denote by $\star$. Note that
by the isomorphism in Proposition \ref{pro:FinFilDual}, this corresponds
to the product on $\hU(L)$. 
Let $\Hom_\kk(U^c(\fc),\kk)_+$ consist of the $\alpha$ with $\alpha(1) = 0$.
We then get the exponential map
\[ \Hom_\kk(U^c(\fc),\kk)_+ \mto{\exp} \epsilon + \Hom_\kk(U^c(\fc),\kk)_+ \]
given by
\[ \exp(\alpha) = \epsilon + \alpha + \alpha^{\star 2}/2! + \alpha^{\star 3}/3!
+ \cdots .\] 
This is well defined since $U^c(\fc)$ is a conilpotent coalgebra
and $\alpha(1) = 0$.
Correspondingly we get 
\[ \epsilon + \Hom_\kk(U^c(\fc),\kk)_+ \mto{\log} \Hom_\kk(U^c(\fc),\kk)_+ \]
given by
\[ \log(\epsilon+ \alpha) = \alpha - \frac{\alpha^{\star 2}}{2} + 
\frac{\alpha^{\star 3}}{3}
- \cdots . \]

\begin{lemma} \label{lem:FinFilExpLog} The maps
\[ \Hom_\kk(U^c(K), \kk)_+ \bihom{\exp}{\log} \epsilon + \Hom_\kk(U^c(K),\kk)_+ \]
give inverse bijections. They restrict to the inverse bijections
\[ \Hom_{Inf}(U^c(\fc),\kk) \bihom{\exp}{\log} \Hom_{Alg}(U^c(\fc),\kk). \]
\end{lemma}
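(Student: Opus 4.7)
The plan is to deduce the lemma by transport along the algebra isomorphism of Proposition \ref{pro:FinFilDual}(b), so that the statement reduces to Proposition \ref{pro:ExpLogExpLog}.

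First I would record that Proposition \ref{pro:FinFilDual}(b,c) gives an algebra isomorphism $\Phi : \hU(\fg) \isopil \Hom_\kk(U^c(\fc),\kk)$ which sends the product $\bullet$ to the convolution product $\star$, the unit $1 \in \hU(\fg)$ to the counit $\epsilon$, and therefore the augmentation ideal $\hU(\fg)_+$ to $\Hom_\kk(U^c(\fc),\kk)_+$. Since both exponentials are defined by the same formal power series in their respective products, $\Phi$ intertwines $\exp^\bullet$ with $\exp^\star$ (and likewise the two logarithms). One must check that the series on the right make sense: for any $u \in U^c(\fc)$ the iterated reduced coproducts $\overline{\Delta}^n(u)$ vanish for $n$ large because $U^c(\fc) = \varinjlim U^c_j(\fc^p)$ is conilpotent, so only finitely many terms of $\exp^\star(\alpha)(u)$ and $\log^\star(\epsilon + \alpha)(u)$ contribute.

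Given this intertwining, the first assertion, that $\exp$ and $\log$ are mutually inverse bijections between $\Hom_\kk(U^c(\fc),\kk)_+$ and $\epsilon + \Hom_\kk(U^c(\fc),\kk)_+$, transports directly from the corresponding statement in Proposition \ref{pro:ExpLogExpLog}. For the restricted bijections I would invoke Lemma \ref{lem:FinFilChar}: its parts (a) and (b) identify $\Phi^{-1}(\Hom_{Inf}(U^c(\fc),\kk)) = \hat{\fg}$ and $\Phi^{-1}(\Hom_{Alg}(U^c(\fc),\kk)) = G(\hU(\fg))$. Hence the restricted bijections $\hat{\fg} \bihom{\exp}{\log} G(\hU(\fg))$ of Proposition \ref{pro:ExpLogExpLog} transport to the restricted bijections claimed here.

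The main (minor) obstacle is verifying compatibility of the identifications in Lemma \ref{lem:FinFilChar} with $\Phi$, namely that the inclusion $\Hom_\kk(\fc,\kk) \injpil \Hom_{Inf}(U^c(\fc),\kk)$ induced by composition with the projection $U^c(\fc) \pils \fc$ of Proposition \ref{pro:FinFilUtilK} is dual under $\Phi$ to the inclusion $\hat{\fg} \injpil \hU(\fg)$; but this is exactly the statement that the Euler idempotent section of Proposition \ref{pro:FinFilUtilK} is dual to the embedding $\fg \injpil U(\fg)$, which in turn is the content of Proposition \ref{pro:ExpLogEuler}. Once this bookkeeping is in place, no further work is required.
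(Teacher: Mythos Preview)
Your approach is exactly the paper's: transport along the algebra isomorphism $\hU(\fg)\iso \Hom_\kk(U^c(\fc),\kk)$ of Proposition~\ref{pro:FinFilDual} so that the statement becomes Proposition~\ref{pro:ExpLogExpLog}. The paper's proof is a single sentence to this effect; your extra bookkeeping (conilpotency for well-definedness, and invoking Lemma~\ref{lem:FinFilChar} to match $\Hom_{Inf}$ with $\hat{\fg}$ and $\Hom_{Alg}$ with $G(\hU(\fg))$) is correct and simply makes explicit what the paper leaves implicit.
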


\begin{proof}
Using the identification of Proposition \ref{pro:FinFilDual} the 
$\exp$ and $\log$ maps above correspond to the 
$\exp$ and $\log$ maps in Proposition \ref{pro:ExpLogExpLog}.
\end{proof}

Since $\Sym(\fc)$ is the free symmetric algebra on $\fc$,
there is a bijection
$\Hom_{Alg}(\Sym(\fc),\kk) \mto{\iso} \Hom_\kk(\fc,\kk)$.
The following shows that all the various maps
correspond.

\begin{proposition}
The following diagram commutes, showing that the various horizontal
bijections correspond
to each other:
\[ \begin{CD}
\Hom_\kk(\fc,\kk) @>{\iso}>> \Hom_{Alg}(\Sym(\fc),\kk) \\
@|  @AA{\psi^*}A \\
\Hom_\kk(\fc,\kk) @>{\exp}>> \Hom_{Alg}(U^c(\fc),\kk) \\
@V{\iso}VV @VV{\iso}V \\
\fg @>{\exp}>> G(\hU(\fg))
\end{CD} \]
\end{proposition}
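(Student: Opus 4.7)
The plan is to verify the two squares separately. For the lower square, we unpack the definitions: the isomorphism $\Hom_\kk(\fc,\kk) \iso \fg$ is the one of Proposition \ref{pro:FinFilDual}(a), while $\Hom_{\mathrm{Alg}}(U^c(\fc),\kk) \iso G(\hU(\fg))$ is the content of Lemma \ref{lem:FinFilChar}(b). Under the algebra isomorphism $\hU(\fg) \iso \Hom_\kk(U^c(\fc),\kk)$ of Proposition \ref{pro:FinFilDual}(b)--(c), the convolution $\star$ corresponds to the completed product $\bullet$, so the exponential defined by the $\star$-series on $\Hom_\kk(U^c(\fc),\kk)_+$ corresponds term by term to the exponential defined by the $\bullet$-series on $\hU(\fg)_+$. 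Hence the lower square is exactly the exp/log bijection of Lemma \ref{lem:FinFilExpLog} transported through Proposition \ref{pro:FinFilDual}, and so commutes.

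For the upper square, fix $\alpha \in \Hom_\kk(\fc,\kk)$. Going along the top, the universal property of the free commutative algebra produces the unique algebra homomorphism $\tilde\alpha : \Sym(\fc) \pil \kk$ extending $\alpha$. Going along the bottom, $\alpha$ is first lifted to the infinitesimal character $\alpha \circ \phi : U^c(\fc) \pil \kk$, where $\phi$ is the projection of Proposition \ref{pro:FinFilUtilK}; the map $\exp(\alpha\circ\phi)$ is an \emph{algebra} homomorphism $U^c(\fc) \pil \kk$ (Lemma \ref{lem:FinFilExpLog}), and composing with $\psi$ from Proposition \ref{pro:FinFilPBW} gives an algebra homomorphism $\exp(\alpha\circ\phi) \circ \psi : \Sym(\fc) \pil \kk$. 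So both paths yield algebra homomorphisms $\Sym(\fc) \pil \kk$, and by the freeness of $\Sym(\fc)$ it suffices to check that they agree on the generating subspace $\fc$.

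The key observation is that $\psi$ restricted to $\fc \sus \Sym(\fc)$ is by construction the inclusion $\fc \hookrightarrow U^c(\fc)$ of Proposition \ref{pro:FinFilUtilK}, which is a section of $\phi$. Hence for $x \in \fc$ one has $\phi(\psi(x)) = x$, so $(\alpha \circ \phi)(\psi(x)) = \alpha(x)$. It remains to check that $\exp(\alpha\circ\phi)$ reduces to $\alpha \circ \phi$ on the image of $\fc$ in $U^c(\fc)$; this follows because $\fc$ consists of primitive elements (it is the image of the Euler idempotent, which is the logarithm of the identity under $\star$), so for primitive $x$ one has $\epsilon(x) = 0$ and $(\alpha\circ\phi)^{\star k}(x) = 0$ for $k \geq 2$ by the primitivity of $x$ together with the vanishing $(\alpha\circ\phi)(1) = 0$. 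Therefore $\exp(\alpha\circ\phi)(\psi(x)) = \alpha(x) = \tilde\alpha(x)$, giving agreement on $\fc$ and thus on all of $\Sym(\fc)$.

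The main obstacle is of a bookkeeping nature: one must line up three different notions of exponential (the $\star$-exponential on $\Hom_\kk(U^c(\fc),\kk)$, the $\bullet$-exponential on $\hU(\fg)$, and the freeness extension $\alpha \mapsto \tilde\alpha$) and verify that the projection $\phi$, the inclusion $\psi|_\fc$, and the identification of $\fc$ with the primitives of $U^c(\fc)$ are all mutually compatible. Once the primitivity of $\fc$ inside $U^c(\fc)$ is noted, the higher convolution powers drop out and the verification becomes essentially formal.
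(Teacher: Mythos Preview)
Your proof is correct and follows essentially the same approach as the paper. The paper handles the lower square by a one-line reference to the proof of Lemma~\ref{lem:FinFilExpLog}, just as you do; for the upper square, the paper argues that both the top map and the middle map are characterized as the unique algebra homomorphism extending a given $K \to \kk$ along the inclusions $K \hookrightarrow \Sym(K)$ and $K \hookrightarrow U^c(K)$ respectively, and since $\psi$ intertwines these inclusions the square commutes. You make the same argument but fill in the one step the paper leaves implicit, namely that $\exp(\alpha\circ\phi)$ really does restrict to $\alpha$ on $K$; your primitivity observation is exactly what is needed there.
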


\begin{proof}
That the lower diagram commutes is clear by the proof of 
Lemma \ref{lem:FinFilExpLog}. The middle (resp. top) map sends $K \pil \kk$ 
to the unique algebra homomorphism $\phi$ (resp. $\phi^\prime$) 
such that the following 
diagrams commute
\[ \xymatrix{ K \ar[r] \ar[rd] & U^c(K) \ar[d]^{\phi} \\
                  & \kk}, \quad
 \xymatrix{ K \ar[r] \ar[rd] & \Sym(K) \ar[d]^{\phi} \\
                  & \kk}. 
\]
Since the following diagram commutes where $\psi$ is the
isomorphism of algebras
\[ \xymatrix{ K \ar[r] \ar[rd] & \Sym(K) \ar[d]^{\psi} \\
                  & U^c(K)}, 
\]
the commutativity of the upper diagram in the statement of the proposition
follows.
\end{proof}

\subsection{Actions of endomorphisms}

Let $E = \End_{Lie \,\, co}(K)$ be the endomorphisms of $K$ as a Lie
co-algebra, which also {\it respect the filtration} on $K$. 

\begin{proposition}
The Euler map in Proposition \ref{pro:FinFilUtilK} 
is equivariant for the endomorphism action.
Hence the isomorphism $\Psi : \Sym(K) \pil U^c(K)$ is equivariant for
the action of the endomorphism group $E$.
\end{proposition}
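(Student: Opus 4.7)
The plan is to extend any $\phi \in E$ to a Hopf algebra endomorphism $\tilde\phi$ of $U^c(K)$ and then observe that the Euler map, being defined purely in terms of the Hopf algebra structure on $U^c(K)$ (as the convolution logarithm of the identity), must commute with $\tilde\phi$. Once equivariance of $e$ is established, equivariance of $\psi$ will follow from its universal property.

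First I would dualize: since $\phi : K \to K$ is a filtration-preserving endomorphism of Lie coalgebras, its dual $\phi^* : L \to L$ is a filtration-preserving endomorphism of Lie algebras, which extends uniquely to a Hopf algebra endomorphism of $U(L)$ (and compatibly of each quotient $U^j(L_p)$). Dualizing back through the finite-dimensional quotients and taking the colimit, one obtains a Hopf algebra endomorphism $\tilde\phi : U^c(K) \to U^c(K)$ extending $\phi$, which respects both the shuffle product and the coproduct $\Delta_{\bullet}$ of Proposition~\ref{pro:FinFilDual}c.

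Next, because $\tilde\phi$ is unital and counital, it commutes with $\eta\circ\epsilon$ and hence with $J = \ben - \eta\circ\epsilon$. Since $\tilde\phi$ commutes with both the shuffle product $\mu$ and the coproduct $\Delta$, for any $f,g \in \Hom_\kk(U^c(K),U^c(K))$ commuting with $\tilde\phi$ we have
\[ \tilde\phi \circ (f\star g) = \mu\circ (\tilde\phi f \otimes \tilde\phi g)\circ\Delta = \mu\circ(f\otimes g)\circ\Delta\circ\tilde\phi = (f\star g)\circ\tilde\phi. \]
By induction $J^{\star n}\circ \tilde\phi = \tilde\phi\circ J^{\star n}$ for every $n$, so the convolution logarithm $e = J - J^{\star 2}/2 + J^{\star 3}/3 - \cdots$ commutes with $\tilde\phi$. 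Since $e$ takes values in $K$ and restricts to the identity on $K$ by Proposition~\ref{pro:FinFilUtilK}, and since $\tilde\phi|_K = \phi$, the identity $e \circ \tilde\phi = \phi \circ e$ is exactly the $E$-equivariance of the Euler map.

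For the isomorphism $\psi : \Sym(K) \to U^c(K)$: by construction $\psi$ is the unique extension of the inclusion $K \hookrightarrow U^c(K)$ (given by $e$) to an algebra homomorphism out of the free commutative algebra $\Sym(K)$, using that $U^c(K)$ is commutative under the shuffle product. Because $\tilde\phi$ is a shuffle-product homomorphism that restricts to $\phi$ on $K$, both $\tilde\phi\circ\psi$ and $\psi\circ\Sym(\phi)$ are algebra homomorphisms $\Sym(K)\to U^c(K)$ extending the same map $K\to U^c(K)$, so they coincide by universality. The main obstacle is really the bookkeeping in the first step: verifying that the filtration-preserving hypothesis on $\phi$ is exactly what is needed to make the dualization $U(L)\rightsquigarrow U^c(K)$ pass cleanly through the colimits over $(j,p)$ of Lemma~\ref{Lem:ExpLogLimjp}; once this is in hand, the remainder is a formal Hopf-algebraic manipulation.
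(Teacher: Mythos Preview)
Your proof is correct and follows essentially the same line as the paper's: both argue that the unit, counit, product and coproduct on $U^c(K)$ are $E$-equivariant, deduce that convolution of equivariant maps is equivariant, and conclude that $e=\log^{\star}(\ben)$ is equivariant; equivariance of $\psi$ then follows from freeness of $\Sym(K)$. The only real difference is in how the $E$-action on $U^c(K)$ is set up: you obtain $\tilde\phi$ by dualizing $\phi$ to a Lie algebra endomorphism of $L$, extending to $U(L)$, and then dualizing back through the finite quotients $U^j(L_p)$, whereas the paper works more directly, noting that $E$ acts functorially on $T^c(K)$ (hence compatibly with the shuffle product) and that $U^c(K)\subseteq T^c(K)$ is a stable Hopf subalgebra. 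Your route makes the compatibility with the coproduct $\Delta_\bullet$ transparent (since it is literally dual to the product on $U(L)$), at the cost of the bookkeeping with the $(j,p)$-limits you flagged; the paper's route avoids that bookkeeping but leaves the equivariance of $\Delta_\bullet$ as an assertion.
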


\begin{proof}
The coproduct on $U^c(K)$ is clearly equivariant for $E$ and similarly
the product on $U^c(K)$ is equivariant, since $U^c(K)$ is a subalgebra of 
$T^c(K)$ for the shuffle product. Then if $f,g : U^c(K) \pil U^c(K)$
are two equivariant maps, their convolution product $f \star g$ is also
equivariant. 

 Since $\ben$ and $\eta \circ \epsilon$ are equivariant for $E$, the
difference $J = \ben - \eta \circ \epsilon$ is so also.
The Euler map $e = J - J^{\star 2}/2 + J^{\star 3}/3 - \cdots$ must
then be equivariant for the action of $E$.

 Since the image of the Euler map is $K$, the inclusion 
$K \hookrightarrow U^c(K)$ is equivariant also, and so the map $\Psi$ above.
\end{proof}

As a consequence of this the action  of $E$ on $K$ induces an action 
on the dual Lie algebra $L$ respecting its filtration.
By Proposition \ref{pro:FinFilDual} this again induces a diagram of actions of the following 
{\it sets}
\begin{equation} \label{eq:FinFilEL}
\begin{CD}  E \times \hU(L) @>>> \hU(L) \\
@VVV @VVV \\
E \times L @>>> L.
\end{CD}
\end{equation}

\subsubsection{The free Lie algebra} \label{sssec:FinFilLie}
Now let $V = \oplus_{i \geq 1} V_i$ be a positively 
graded vector spaces with finite
dimensional parts $V_i$. We consider the special case of the above
that $L$ is the completion $\hLie(V)$ of the free Lie algebra on $V$. 
Note that $\Lie(V)$ is a graded 
Lie algebra with finite dimensional graded parts. 
The enveloping algebra $U(\Lie(V))$ is the tensor algebra $T(V)$.

The graded dual vector space is $V^\gd = \oplus V_i^*$ and the graded
dual Lie co-algebra is $\Lie(V)^\gd$. The Hopf algebra
$U^c(\Lie(V)^\gd)$ is the shuffle Hopf algebra $T(V^\gd)$. 

Since $\Lie(V)$ is the free Lie algebra on $V$, 
the endomorphisms $E$ identifies as (note that here it is
essential that we consider endomorphisms respecting the filtration) 
\begin{equation} \label{eq:FinFilEndco}
 \End_{\Lie \, co}(\Lie(V)^\gd, \Lie(V)^\gd) ) = \Hom_{\Lie}(\Lie(V), \hLie(V)).
\end{equation}
This is a variety with coordinate ring $\cE_V = \Sym( V \te \Lie(V)^\gd)$,
which is a bialgebra.
Furthermore the diagram \eqref{eq:FinFilEL} with $L = \hLie(V)$ in this
case will be a morphism of varieties: Both $E,L$ and $\hU(L)$ come with
filtrations and all maps are given by polynomial maps. So we get a dual
diagram of coordinate rings
\[ \begin{CD}
\Sym(\Lie(V)^\gd) @>>> \cE_V \te \Sym(\Lie(V)^\gd) \\
@VVV @VVV \\
\Sym(T(V)^\gd) @>>> \cE_V \te \Sym(T(V)^\gd)
\end{CD}. \]
But since the action of $E$ is linear on $\hLie(V)$ and $\hat{T}(V)$, 
this gives a diagram
\[ \begin{CD}
\Lie(V)^\gd @>>> \cE_V \te \Lie(V)^\gd \\
@VVV @VVV \\
T^c(V^\gd) @>>> \cE_V \te T^c(V^\gd)
\end{CD}, \]
and so the isomorphism $\Sym(\Lie(V)^\gd) \mto{\iso} T^c(V^\gd)$
is an isomorphism of comodules over the algebra $\cE_V$.

\subsection{Baker-Campbell-Haussdorff on coordinate rings}
\label{subsec:FinFilBCH}
The space $K$ has a countable basis and so we may consider
$\Sym(K)$ as the coordinate ring of the variety $L = \Hom_\kk(K,\kk)$. 
By the isomorphism $\psi:\Sym(K) \mto{\iso} U^c(K)$ of 
Proposition \ref{pro:FinFilPBW} we may think
of $U^c(K)$ as this coordinate ring. Then also 
$U^c(K) \te_\kk U^c(K)$ is the coordinate ring of $L \times L$.

The coproduct (whose dual is the
product on $\hU(L)$) 
\[ U^c(K) \mto{\Delta_\bullet} U^c(K) \te_\kk U^c(K), \]
will then correspond to a morphism of varieties
$L \times L \pil L$. The following explains what it is.

\begin{proposition}
The map $L \times L \pil L$ given by
\[ (a,b) \mapsto \log^\bullet (\exp^\bullet(a) \bullet \exp^\bullet(b)) \]
is a morphism of varieties, and on coordinate rings it corresponds
to the coproduct
\[ U^c(K) \mto{\Delta_\bullet} U^c(K) \te U^c(K). \]
\end{proposition}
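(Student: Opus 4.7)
The plan is to first verify that $\Delta_\bullet$ defines a morphism of varieties via the coordinate ring identification $A(L)\iso \Sym(K)\iso U^c(K)$ of Proposition~\ref{pro:FinFilPBW}, and then identify the resulting morphism with the stated BCH-type map by evaluating at points.

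For the first step, I would observe that $U^c(K)$ is a Hopf algebra with the shuffle product and coproduct $\Delta_\bullet$. Indeed, at each finite level $U^j(L_p)$ is a Hopf algebra whose usual coproduct dualizes to shuffle on $U^c_j(K^p)$ (as in Lemma~4.4) and whose product $\bullet$ dualizes to $\Delta_\bullet$ (by Proposition~\ref{pro:FinFilDual}(c)); Hopf compatibility on $U^j(L_p)$ dualizes and passes through the colimit. Hence $\Delta_\bullet$ is an algebra homomorphism for the shuffle product, and via the isomorphism $\psi$ it yields an algebra homomorphism of coordinate rings $A(L)\pil A(L)\te A(L)=A(L\times L)$. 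By the correspondence in Subsection~\ref{subsec:Infinite}, this algebra homomorphism corresponds to a morphism of varieties $L\times L\pil L$.

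For the second step, I would compute the value of this morphism at a point $(a,b)\in L\times L$. Such a point corresponds, via $\psi\te\psi$, to the algebra homomorphism $s_a\te s_b:U^c(K)\te U^c(K)\pil\kk$, where $s_a,s_b:U^c(K)\pil\kk$ are the algebra homomorphisms attached to $a,b$. By Lemma~\ref{lem:FinFilChar}(b) together with the commutative diagram in the preceding proposition, $s_a$ coincides with the grouplike element $\exp^{\bullet}(a)\in G(\hU(L))$ under the identification $\hU(L)\iso\Hom_\kk(U^c(K),\kk)$ of Proposition~\ref{pro:FinFilDual}(b), and likewise for $s_b$. Evaluating our morphism at $(a,b)$ amounts to forming $(s_a\te s_b)\circ\Delta_\bullet$, which is by definition the convolution $s_a\star s_b$ in $\Hom_\kk(U^c(K),\kk)$. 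By Proposition~\ref{pro:FinFilDual}(c), this convolution corresponds to $\exp^{\bullet}(a)\bullet\exp^{\bullet}(b)\in G(\hU(L))$; the associated point of $L$ is then $\log^{\bullet}(\exp^{\bullet}(a)\bullet\exp^{\bullet}(b))$, which is the asserted value.

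The main obstacle I expect is the filtration bookkeeping in step one, because $\bullet$ on $\hU(L)$ raises degree, so dually $\Delta_\bullet$ does not strictly preserve the bi-filtration $U^c_j(K^p)$ on the nose but only up to a shift (essentially $U^c_j\pil U^c_{2j}\te U^c_{2j}$ at the finite level). Verifying that this shift is nonetheless compatible with the notion of morphism of infinite-dimensional affine varieties in Subsection~\ref{subsec:Infinite}, via a suitably chosen filtration on $U^c(K)\te U^c(K)$, is the only delicate point; once that is settled, step two reduces to the duality between the convolution product and the completed product already recorded in Proposition~\ref{pro:FinFilDual}.
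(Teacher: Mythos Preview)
Your proposal is correct and follows essentially the same approach as the paper: the paper's proof is precisely your step two, the pointwise identification showing that $(a,b)$ corresponds to $\exp^\bullet(a)\te\exp^\bullet(b)$ and that composing with $\Delta_\bullet$ yields the character $\exp^\bullet(a)\bullet\exp^\bullet(b)$, whose associated point of $L$ is $\log^\bullet(\exp^\bullet(a)\bullet\exp^\bullet(b))$. Your step one and the filtration worry are not needed: the paper has already recorded that $U^c(K)$ is a sub-Hopf algebra of $T^c(K)$ with the shuffle product, so $\Delta_\bullet$ is automatically an algebra homomorphism, and the general correspondence of Subsection~\ref{subsec:Infinite} between algebra homomorphisms and morphisms of varieties then applies directly without any further filtration bookkeeping.
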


This above product on $L$ is the Baker-Campbell-Haussdorff product.

\begin{example}
Let $V = \oplus_{i \geq 1} V_i$ be a graded vector space with finite
dimensional graded parts. Let $\Lie(V)$ be the free Lie algebra
on $V$, which comes with a natural grading. The enveloping algebra
$U(\Lie(V))$ is the tensor algebra $T(V)$. The dual Lie coalgebra
is the graded dual $K = \Lie(V)^{\gd}$, and $U^c(K)$ is the
graded dual tensor coalgebra $T(V^{\gd})$ which comes with the shuffle product. 
Thus the shuffle algebra $T(V^\gd)$ identifies as the {\it coordinate ring}
of the Lie series, the completion $\widehat{\Lie}(V)$ 
of the free Lie algebra on $V$.

The coproduct on 
$T(V^{\gd})$ is the deconcatenation coproduct. This can then be
considered as an extremely simple codification of the
Baker-Campbell-Haussdorff formula for Lie series in the
completion $\widehat{\Lie}(V)$. 
\end{example}

\begin{proof}
If $X \pil Y$ is a morphism of varieties and $A(Y) \mto{\phi} A(X)$ the 
corresponding homomorphism of coordinate rings, then the point
$p$ in $X$ corresponding to the algebra homomorphism $A(X) \mto{p^*} \kk$
maps to the point $q$ in $Y$ corresponding to the algebra homomorphism
$A(Y) \mto{q^*} \kk$ given by $q^* = \phi \circ p^*$. 

Now given a points $a$ and $b$ in $L = \Hom_\kk(K,\kk)$. They correspond to
algebra homomorphisms from the coordinate ring $U^c(K) \mto{\tilde{a}, 
\tilde{b}} \kk$, the unique such extending $a$ and $b$, and these
are $\tilde{a} = \exp(a)$ and $\tilde{b} = \exp(b)$. 
The pair $(a,b) \in L \times L$ corresponds to the
homomorphism on coordinate rings
\[ \exp(a) \te \exp(b) : U^c(K) \te U^c(K) \mto{\tilde{a} \te \tilde{b}} 
\kk \te_\kk \kk = \kk.\]
Now via the coproduct, which is the homomorphism of coordinate
rings,
\[ U^c(K) \mto{\Delta_\bullet} U^c(K) \te U^c(K) \]
this maps to the algebra homomorphism 
$\exp(a) \bullet \exp(b) : U^c(K) \pil \kk$. This is the algebra 
homomorphism corresponding to the following point in $L$: 
\[ \log^\bullet(\exp(a) \bullet \exp(b)) : K \pil \kk. \]
\end{proof}

\subsection{Filtered pre- and post-Lie 
algebras with finite dimensional quotients}
\label{subsec:FinFilButcher}
We now assume that the filtered quotients $\Po/\Po^p$, which
again are post-Lie algebras, are all 
finite dimensional.
Let their duals be $\CPo_p = \Hom_\kk(\Po/\Po^p,\kk)$ and
$\CPo = \underset{p} \varinjlim \, \CPo_p$, which is a post-Lie coalgebra.
We shall assume $\Po = \hat{\Po}$ is complete with respect to this 
filtration.
Then $\Po = \Hom(\CPo,\kk)$, and $\Sym(\CPo)$ is the coordinate ring
of $\Po$. There are two Lie algebra structures on $P$, given
by $[,]$ and $\lbra, \rbra$ of Definition \ref{def:PrePoRel}. These
correspond to the products $\bullet$ and $*$ on the enveloping
algebra of $P$. We shall use the first product $\bullet$, giving
the coproduct $\Delta_\bullet$ on $U^c(Q)$. For this coproduct
Proposition \ref{pro:FinFilPBW} gives an isomorphism
\begin{equation} \label{eq:PoPreCPO}
\psi_\bullet : \Sym(\CPo) \mto{\iso} U^c(\CPo).
\end{equation}
Due to the formula in Proposition  
\ref{pro:PoPrexy}
the product
\[ \Po \times \Po \mto \sharp \Po \]
on each quotient $P/P^i$, is given by 
polynomial expressions. It thus corresponds to a 
homomorphism of coordinate rings 
\begin{equation} \label{eq:PoPreDeltaSharp}
\Sym(\CPo) \mto{\Delta_{\sharp}} \Sym(\CPo) \te \Sym(\CPo).
\end{equation}

\begin{proposition} \label{pro:FinFilBDual}
Via the isomorphism $\psi_\bullet$ in (\ref{eq:PoPreCPO})
the coproduct $\Delta_\sharp$ above corresponds to the
coproduct
\[ U^c(\CPo) \mto{\Delta_*} U^c(\CPo) \te U^c(\CPo), \]
which is the dual of the product $*$ on $U(\Po)$.
\end{proposition}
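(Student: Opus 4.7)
The plan is to follow exactly the same proof as for the previous proposition (about the Baker--Campbell--Hausdorff product on $L$), replacing the $\bullet$-product of $\hU(\Po)$ by the $*$-product throughout. Both $\bullet$ and $*$ make $U(\Po)$ into a Hopf algebra with the same shuffle coproduct, so they dualize on equal footing to the two coproducts $\Delta_\bullet$ and $\Delta_*$ on the coalgebra $U^c(\CPo)$, and the identification $\psi_\bullet$ of Proposition~\ref{pro:FinFilPBW} serves as the coordinate ring of $\Po$ in either case.

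First, I would note that by the same argument as in the commutative diagram at the end of Subsection~4.1 in the Lie case, a point $a\in\Po$ corresponds under $\psi_\bullet$ to the character $\exp^\bullet(a):U^c(\CPo)\to\kk$, i.e.\ the grouplike element $\exp^\bullet(a)\in G(\hU(\Po))$ viewed through the duality of Proposition~\ref{pro:FinFilDual}. A pair $(a,b)\in\Po\times\Po$ therefore corresponds to the character $\exp^\bullet(a)\te\exp^\bullet(b):U^c(\CPo)\te U^c(\CPo)\to\kk$.

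Next, precomposing this pair-character with $\Delta_*$ produces, by the very definition of $\Delta_*$ as the dual of $*$, the linear form $\exp^\bullet(a)*\exp^\bullet(b):U^c(\CPo)\to\kk$. Because the shuffle coproduct is compatible with $*$, grouplike elements are closed under $*$-multiplication, so this form equals $\exp^\bullet(c)$ for a unique $c\in\Po$; applying $\log^\bullet$ and invoking the definition of the Butcher product gives $c=\log^\bullet(\exp^\bullet(a)*\exp^\bullet(b))=a\sharp b$. Tracing back through $\psi_\bullet$, the character of $\Sym(\CPo)$ thus obtained agrees with the one obtained by precomposing $\Delta_\sharp$ (as defined in \eqref{eq:PoPreDeltaSharp}) with $(a,b)$. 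As $(a,b)$ ranges over $\Po\times\Po$, these characters separate elements of $\Sym(\CPo)\te\Sym(\CPo)$ at each filtration level, forcing the identity $(\psi_\bullet\te\psi_\bullet)\circ\Delta_\sharp=\Delta_*\circ\psi_\bullet$.

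The main delicacy is the bialgebra compatibility of the $*$-product with the shuffle coproduct, which is required to know that $\exp^\bullet(a)*\exp^\bullet(b)$ remains grouplike; this is implicit in the earlier use of $\exp^*$ and $\log^*$ and in the fact that $(U(\Po),*)$ is the enveloping algebra of the Lie algebra $\lbra,\rbra$ with the shuffle coproduct making Lie elements primitive. Everything else is a formal identification of characters, reducing to finite-dimensional computations on each quotient $\Po/\Po^p$ via the colimit construction of $U^c(\CPo)$, in complete parallel with the previous proposition.
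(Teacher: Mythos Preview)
Your proposal is correct and follows essentially the same argument as the paper's own proof: both identify the character of $U^c(\CPo)$ attached to a point $a\in\Po$ via $\psi_\bullet$ as $\exp^\bullet(a)$, compose the pair-character $\exp^\bullet(a)\te\exp^\bullet(b)$ with $\Delta_*$ to obtain $\exp^\bullet(a)*\exp^\bullet(b)$, and recognize the resulting point as $\log^\bullet(\exp^\bullet(a)*\exp^\bullet(b))=a\,\sharp\, b$. The paper invokes the general bijection between morphisms of varieties and algebra homomorphisms of coordinate rings to conclude, whereas you phrase this as ``characters separate elements at each filtration level''; these are the same mechanism, and your additional remark on the bialgebra compatibility of $*$ with the shuffle coproduct is implicit in the paper but worth making explicit.
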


\begin{remark}
In order to identify the homomorphism of coordinate rings as the
coproduct $\Delta_*$ it is essential that one uses the isomorphism
$\psi_\bullet$ of \eqref{eq:PoPreCPO}. If one uses another isomorphism
$\Sym(Q) \mto{\iso} U^c(Q)$ like the isomorphism $\psi_*$ derived
from the coproduct $\Delta_*$, the statement is not correct.
See also the end of the last remark below.
\end{remark}

\begin{remark}{\it The Connes-Kreimer Hopf algebra.}
For the free pre-Lie algebra $T_C$ 
(see the next Section \ref{sec:Free}) this
identifies the Connes-Kreimer Hopf algebra $\cH_{CK}$ as the {\it coordinate
ring $\Sym(T_C^\gd)$} of the Butcher series $\hat{T}_C$ under the
Butcher product.

As a variety the Butcher series $\hat{T}_C$ is endowed with the
Zariski topology, and the Butcher product is continuous for this topology.
In \cite{Bo-Sc} another finer topology on $\hat{T}_C$ is considered
when the field $\kk = \RR$ or $\CC$.
\end{remark}

\begin{remark} {\it The MKW Hopf algebra.}
For the free post-Lie algebra $P_C$ (see Section \ref{sec:Free}) it 
identifies the MKW Hopf algebra
$T(\OT_C^\gd)$ as the {\it  coordinate ring $\Sym(\Lie(\OT_C)^\gd)$} of the
Lie-Butcher series $\hat{P}_C = \widehat{\Lie}(\OT_C)$. 
A (principal) Lie-Butcher series 
$\ell \in \hat{P}_C$ corresponds to an
element $\Lie(\OT_C)^\gd \mto{\ell} k$. This lifts via the isomorphism
$\psi_\bullet$ of \eqref{eq:PoPreCPO} to a character of the shuffle algebra 
$T(\OT_C^\gd) \mto{\tilde{\ell}} k$. That the lifting from (principal) LB series
to character of the MKW Hopf algebra must be done
using the inclusion $\Lie(\OT_C)^\gd \inpil T(\OT_C^\gd)$ via
the Euler map of Proposition \ref{pro:FinFilUtilK} associated to the
coproduct $\Delta_\bullet$, is a technical
point which has not been made explicit previously.
\end{remark}

\begin{proof}[Proof of Proposition \ref{pro:FinFilBDual}.]
Given points $a,b \in P$. They correspond to linear maps $Q \mto{a,b} k$.
Via the isomorphism $\psi_\bullet$ these extend to algebra homomorphisms
$U^c(Q) \mto{\tilde{a}, \tilde{b}} k$, where $\tilde{a} = \exp^\bullet(a)$
and $\tilde{b} = \exp^{\bullet}(b)$. The pair $(a,b) \in P \times P$ 
then corresponds to a homomorphism of coordinate rings
\[ \exp^\bullet(a) \te \exp^\bullet(b) : 
U^c(Q) \te U^c(Q) \mto{\tilde{a} \te \tilde{b}} 
\kk \te_\kk \kk = \kk.\]
Now via the coproduct associated to $*$, 
which is the homomorphism of coordinate
rings,
\[ U^c(Q) \mto{\Delta_*} U^c(Q) \te U^c(Q) \]
this maps to the algebra homomorphism 
$\exp^\bullet(a) * \exp^\bullet(b) : U^c(Q) \pil \kk$. This is the algebra 
homomorphism corresponding to the following point in $P$: 
\[ \log^\bullet(\exp^\bullet(a) * \exp^\bullet(b)) : Q \pil \kk. \]
\end{proof}

\section{Free pre- and post-Lie algebras}
\label{sec:Free}
This section recalls free pre- and post-Lie algebras, and the
notion of substitution in these algebras.

\subsection{Free post-Lie algebras}
We consider the set of rooted planar trees, or ordered trees: 
\[ \OT = \{ \ab, \aabb, \aababb, \aaabbb, \aabababb, \aabaabbb, \aaabbabb,\cdots \}, \]
and let $\kk \OT$ be the $\kk$-vector space with these
trees as basis.
It comes with an operation $\rhd$, called {\it grafting}. For two trees $t$ and
$s$ we define $t \rhd s$ to be the sum of all trees obtained by 
attaching the root of $t$ with a new edge onto a vertex of $s$, 
with this new edge as the leftmost branch into $s$.

If $C$ is a set, we can color the vertices of $\OT$ with the elements of 
$C$. We then get the set $\OT_C$ of labelled planar trees.
The {\it free post-Lie algebra} on $C$ is the free Lie algebra 
$P_C = \Lie(\OT_C)$ on the set of $C$-labelled planar trees. The grafting
operation is extended to the free Lie algebra $\Lie(\OT_C)$ by using
the relations \ref{def:PrePoRel}. Note that $P_C$ has a natural 
grading by letting $P_{C,d}$ be the subspace generated by all
bracketed expressions of trees with a total number of $d$ leaves.
In particular $P_C$ is filtered.

The enveloping algebra of
$P_C$ identifies as the tensor algebra $T(\OT_C)$.
It was introduced and studied in \cite{MK-W}, see 
also \cite{MK-F} for more on the computational aspect in this algebra.
 Its completion
identifies as 
\[ \hat{T}(\OT_C)  = \prod_{d \geq 0} T(\OT_C)_d. \]

\medskip
\subsection{Free pre-Lie algebras}
Here we consider instead (non-ordered) rooted trees
\[ T =\{ \ab, \aabb, \aababb, \aaabbb, \aabababb, \aabaabbb=\aaabbabb,\cdots \}. \]
On the vector space $\kk T$ we can similarly define grafting $\rhd$.
Given a set $C$ we get the set $T_C$ of trees labelled by $C$.
The free pre-Lie algebra is $\kk T_C$, \cite{ChaLiv}. Its enveloping algebra
is the symmetric algebra $\Sym(T_C)$, called the Grossmann-Larsen algebra,
and comes with the ordinary symmetric product $\cdot$ and the product $*$,
\cite{OudGuin}.

\subsection{Substitution}
We consider free post-Lie algebras $P_C$. Let $M$ be a manifold
and $\cX M$ the vector fields on this manifold. In the setting
of Lie-Butcher series, $\cX M$ can be endowed with the structure of 
a post-Lie algebra, see \cite{MK-L:Formal}. 
Let $f \in \cX M$
be a vector field, and $P_\bullet$ the free post-Lie algebra
on one generator. We get a map $\bullet \mapsto f$ and so a
map of post-Lie algebras $P_\bullet \pil \cX M$, which
sends a tree $t$ to the associated {\it elementary differential} of $f$.
If $f \in \cX M \lbra h \rbra$ we similarly get a map of post-Lie
algebras $P_\bullet \pil \cX M \lbra h \rbra$,

We get a commutative diagram of flow maps
\[ \begin{CD}
  P_{\bullet,\field} @>{\Phi_P}>> P_{\bullet,\flow} \\
  @VVV  @VVV \\
  \cX M \lbra h \rbra_{\field} @>{\Phi_{\cX M}}>> 
  \cX M \lbra h \rbra_{\flow}. 
  \end{CD}
\]
The field $f$ is mapped to the flow $\Phi_{\cX M}(f)$. By perturbing the
vector field $f \pil f + \delta$, it is sent to a flow 
$\Phi_{\cX M}(f + \delta)$.
We assume the perturbation  $\delta$ is expressed in terms of the
elementary differentials of $f$, and so it comes from a 
perturbation $ \bullet \pil \bullet + \delta^\prime = s$. 
Since $\Hom(\bullet, P_\bullet) = \End_{\postlie}(P_\bullet)$ this 
gives an endomorphism of the post-Lie algebra. We are now interested in
the effect of this endomorphism on the flow, called {\it substitution}
of the perturbed vector field, and we are interested
in the algebraic aspects of this action. We study this for the
free post-Lie algebra $P_C$, but most of the discussions below are of
a general nature, and applies equally well to the free pre-Lie algebra, 
and generalizes the results of \cite{EF:TwoInteract}.

\section{Action of the endomorphism group
and substitution in free post-Lie algebras}
\label{sec:Action}

Substitution in the free pre-Lie or free post-Lie algebras
on one generator give, by dualizing, the operation of co-substitution in
their coordinate rings, which are the Connes-Kreimer and the
MKW Hopf algebras. 
In \cite{EF:TwoInteract} they show that co-substitution
on the Connes-Kreimer algebra is governed by a bialgebra $\cH$ such
that the Connes-Kreimer algebra $\cH_{CK}$ is a comodule bialgebra over this
bialgebra $\cH$. Moreover $\cH_{CK}$ and $\cH$ are isomorphic as
commutative algebras. This is the notion of two bialgebras in 
{\it cointeraction}, a situation further studied in \cite{Ma-OrG},
\cite{Fo-Erh}, and \cite{Fo-Chr}.

   In this section we do the analog for the MKW Hopf algebra, and in
a more general setting, since we consider free pre- and 
post-Lie algebras on any finite
number of generators. In this case $\cH_{CK}$ and $\cH$ are no longer
isomorphic as commutative algebras. 
As we shall see the situation is understood very well by using the
algebraic geometric setting and considering the MKW Hopf algebra as the
coordinate ring of the free post-Lie algebra. The main results
of \cite{EF:TwoInteract} also follow, and are understood better,
by the approach we develop here.

\subsection{A bialgebra of endomorphisms}
Let $C$ be a finite dimensional vector space over the field $\kk$, 
and $P_C$ the 
free post-Lie algebra on this vector space. 
 It is a graded vector space 
$P_C = \bigoplus_{d \geq 1} P_{C,d}$ graded by the number
of vertices in bracketed expressions of trees, and so has finite dimensional
graded pieces.
It has a graded dual 
\[ P_C^\gd = \oplus_d \Hom_\kk(P_{C,d},\kk). \]
Let $\{ l \}$ be a basis for $P_C$. It gives a dual
basis $\{ l^*\}$ for $P_C^\gd$.
The dual of $P_C^\gd$ is the completion 
\[ \hP_C = \Hom_\kk(P_C^\gd, \kk) = \underset{d}{\varprojlim} P_{C, \leq d}. \]
It is naturally a post-Lie algebra and comes with a decreasing
filtration $\hP_C^{d+1} = \ker (\hP_C \pil P_{C,\leq d}$.   

\medskip
Due to the freeness of 
$P_C$ we have:
\[ \Hom_\kk(C, P_C) = \Hom_{\postlie}(P_C,P_C) = \End_{\postlie}(P_C). \]
Denote the above vector space as $E_C$. 
If we let $\{c\}$ be a basis for $C$, 
the graded dual $E_C^\gd = C \te_\kk P_C^\gd$ has a basis 
$\{ a_c(l) := c \te l^*\}$. 

The dual of $E_C^\gd$ is $\hE_C = \Hom_\kk(E_C^\gd,\kk)$ which may be written
as $C^* \te_\kk \hP_C$. This is an affine 
space with coordinate ring 
\[ \cE_C := \Sym(E_C^\gd) = \Sym(\Hom_\kk(C,P_C)^\gd) = \Sym( C \te_\kk P_C^\gd). \]
The filtration on $\hP_C$ induces also a filtration on $\hE_C$.

\medskip
A map of post-Lie algebras $\phi:P_C \pil \hat{P}_C$ induces a map 
of post-Lie algebras $\hat{\phi}: \hat{P_C} \pil \hat{P}_C$. 
We then get the inclusion
\[ \hE_C = \Hom_{\postlie}(P_C, \hat{P}_C) \sus \Hom_{\postlie}(\hat{P}_C,
\hat{P}_C). \]
If $\phi,\psi \in \hE_C$, we get a composition $\psi \circ \hat{\phi}$,
which we by abuse of notation write as $\psi \circ \phi$. 
This makes
$\hE_C$ into a monoid of affine varieties:
\[ \hE_C \times \hE_C \mto{\circ} \hE_C. \]
It induces a homomorphism on coordinate rings:
\[ \cE_C \mto{\Delta_\circ} \cE_C \te \cE_C. \]
This coproduct is coassociative, since $\circ$ on $\hE_C$ is
associative. Thus $\cE_C$ becomes a bialgebra.

Note that when $C = \langle \bullet \rangle$ is one-dimensional, then
\[ \cE_{\bullet} = \Sym(P_{\bullet}^\gd) \iso T_{\shuffle}(\OT_{\bullet}^\gd) \]
as {\it algebras}, using Proposition \ref{pro:FinFilPBW}. 
The coproduct $\Delta_\circ$ considered on the shuffle algebra is, however, 
neither deconcatenation nor the Grossmann-Larsen coproduct.
A concrete description of this coproduct is given in \cite{EF:TwoInteract}.

\subsubsection{Hopf algebras of endomorphisms}
The augmentation map $P_C \pil C$ gives maps
\[ \Hom_\kk(C,P_C) \pil \Hom_\kk(C,C)\] and dually
\[ \Hom_\kk(C,C)^\gd \sus \Hom_\kk(C,P_C)^\gd \iso C \te_\kk P_C^\gd.\]
Recall that $a_c(d)$ are the basis elements of $\Hom_\kk(C,C)^\gd$
(the coordinate functions on $\Hom_\kk(C,C)$), where
$c$ and $d$ range over a basis for $C$. We can then invert
$D = \det(a_c(d))$ in the coordinate ring $\cE_C$. This gives
a Hopf algebra $\cE_C^{\times}$ which is the localized ring
$(\cE_C)_D$. Another possibility is to divide $\cE_C$
by the ideal generated by $D-1$. This gives a Hopf algebra 
$\cE_C^{1} = \cE_C/(D-1)$. 
A third possibility is to to divide $\cE_C$ out
by the ideal generated by the $a_c(d) - \delta_{c,d}$. This 
gives a Hopf algebra $\cE_C^{\text{Id}}$. In the case $C = \{ \bullet \}$
both the latter cases give the Hopf algebra $\overline{\cH}$
in \cite{EF:TwoInteract}.

\subsection{The action on the free post-Lie algebra}
\label{subsec:Action}
The monoid $E_C$ acts on $P_C$, and $\hE_C$ acts on $\hP_C$. So we 
get a morphism of affine varieties
\begin{equation} \label{eq:SubStar} \hE_C \times \hP_C \mto{\star} \hP_C 
\end{equation}
called {\it substitution}.

Let $\cH_C = \Sym(P_C^\gd)$ be the coordinate ring
of $\hP_C$. We get a homomorphism of coordinate rings
called {\it co-substitution}
\begin{equation} \label{eq:SubStarKord}\cH_C \mto{\Delta_\star} \cE_C \te \cH_C.
\end{equation}
Note that the map in (\ref{eq:SubStar}) is linear in the second factor
so the algebra homomorphism  (\ref{eq:SubStarKord}) comes from a linear map
\[ P_C^\gd \pil \cE_C \te P_C^\gd. \]
The action $\star$ gives a commutative diagram
\[ \begin{CD} 
\hE_C \times \hE_C \times \hP_C 
@>{1 \times \star }>> \hE_C \times \hP_C \\
@V{\circ \times \ben}VV @VV{\star}V \\
\hE_C \times \hP_C 
@>{\star}>> \hP_C 
\end{CD} \]
which dually gives a diagram
\[ \begin{CD} 
\cE_C \te \cE_C \te \cH_C
@<<< \cE_C \te \cH_C \\
@AAA @AAA \\
\cE_C \te \cH_C 
@<<< \cH_C.
\end{CD} \]

This makes $\cH_C$ into a comodule over $\cE_C$, in fact a comodule
algebra, since all maps are homomorphisms of algebras.
The Butcher product $\rf$ on $\hP_C$ is dual to the coproduct
$\Delta_\star : \cH_C \pil \cH_C \te \cH_C$ by Proposition 
\ref{pro:FinFilBDual}. Since $\hE_C$ gives an endomorphism
of post-Lie algebra we have for $a \in \hE_C$ and $u,v \in \hP_C$:
\[a\star(u \rf v) = (a\star u) \rf (a\star v). \]
In diagrams
\[ \begin{CD} 
\hE_C \times \hE_C \times \hP_C \times \hP_C
@>{\ben \times \tau \times \ben}>> 
\hE_C \times \hP_C \times \hE_C \times \hP_C
@>{ \star \times \star}>> \hP_C \times \hP_C
 \\
@A{{\rm{diag}} \times \ben \times \ben }AA 
@.  @VV{\rf}V\\
\hE_C \times \hP_C \times \hP_C @>{\ben \times \rf}>>
\hE_C \times \hP_C @>{\star}>> \hP_C 
\end{CD} \]
which dually gives a diagram
\[ \begin{CD} 
\cE_C \te \cE_C \te \cH_C \te \cH_C @<{\ben \te \tau \te \ben}<<
\cE_C \te \cH_C \te \cE_C \te \cH_C @<{\Delta_\star \te \Delta_\star}<<
 \cH_C \te \cH_C \\
@VVV @. @AA{\Delta_*}A \\
\cE_C \te \cH_C \te \cH_C @<{\ben \te \Delta_*}<< 
\cE_C \te \cH_C @<{\Delta_\star}<< \cH_C.
\end{CD} \]
This makes $\cH_C$ into a comodule Hopf algebra over $\cE_C$.
We also have 
\[a\star(u \gft v) = (a\star u) \gft (a\star v) \]
giving corresponding commutative diagrams, making $\cH_C$ into
a comodule algebra over $\cE_C$.


\subsubsection{The identification with the tensor algebra}
\label{subsubsec:SymTe}

The tensor algebra $T(\OT_C)$ is the enveloping algebra of $P_C = \Lie(OT_C)$. 
The endomorphism of post-Lie co-algebras 
$\End_{\postlie \text{-co}}(P_C^\gd)$ identifies by equation
\eqref{eq:FinFilEndco} as $\hE_C =
\Hom_{\postlie}(C, \hat{P}_C)$. It is an endomorphism submonoid of 
$\End_{\Lie}(P_C^\gd)$

By Subsubsection \ref{sssec:FinFilLie} the isomorphism 
$\cH_C = \Sym(P_C^\gd) \mto{\iso} T^c(\OT_C^\gd)$ is equivariant for the action of $\hE_C$ and
induces a commutative diagram
\begin{equation} \label{eq:SubHTId}
\begin{CD}
\cH_C @>{\Delta_\star}>> \cE_C \te \cH_C \\
@V{\iso}VV @VV{\iso}V \\
T^c(\OT_C^\gd) @>{\Delta_{T,\star}}>> \cE_C \te T^c(\OT_C^\gd)
\end{CD} 
\end{equation}
Thus all the statements above in Subsection \ref{subsec:Action} may be
phrased with $T^c(\OT_C^\gd)$ instead of $\cH_C$ as comodule over $\cE_C$.



\subsection{The universal substitution}
Let $K$ be a commutative $\kk$-algebra. We then get
$P_{C,K}^\gd = K \te_\kk P_C^\gd$, and correspondingly we get
\[ E_{C,K}^\gd, \quad  \cH_{C,K} = \Sym(P_{C,K}^\gd), \quad \cE_{C,K} =
\Sym(E_{C,K}^\gd). \]
Let the completion $\hat{P}_{C,K} = \Hom(P_{C,K}^\gd,K)$. (Note that
this is not $K \te_\kk \hat{P}_C$ but rather larger than this.)
Similarly we get $\hE_{C,K}$.
The homomorphism of coordinate rings
$\cH_{C,K} \pil \cE_{C,K} \te_K \cH_{C,K}$ corresponds to a map of affine
$K$-varieties (see Remark \ref{rem:VarBasicsK})
\begin{equation} \label{eq:SubStarK} \hE_{C,K} \times \hP_{C,K} \pil \hP_{C,K}.
\end{equation}
A $K$-point $A$ in the affine variety $\hE_{C,K}$ then corresponds
to an algebra homomorphism $\cE_{C,K} \mto{A^*} K$, and $K$-points
$p \in \hP_{C,K}$ corresponds to algebra homomorphisms $\cH_{C,K} \mto{p^*} K$.

In particular the map obtained from (\ref{eq:SubStarK}), using
$A \in \hE_{C,K}$:
\begin{equation} \label{eq:SubAstar} \hP_{C,K} \mto{A_\star} \hP_{C,K} 
\end{equation}
corresponds to the morphism on coordinate rings
\begin{align} \label{eq:SubHCK-Koord}
 \cH_{C,K} \pil \cH_{C,K} \te_K \cE_{C,K} \mto{\ben \te A^*}
\cH_{C,K} \te_K K = \cH_{C,K} 
\end{align}
which due to (\ref{eq:SubAstar}) being linear, comes from a $K$-linear map
\[ P^\gd_{C,K} \pil P^\gd_{C,K}. \]

\medskip
Now we let $K$ be the commutative algebra $\cE_C = \Sym(E_C^\gd)$. 
Then 
\[ \cE_{C,K} = K \te_\kk \Sym(E_C^\gd) = \Sym(E_C^\gd) \te \Sym(E_C^\gd). \] 
There is a canonical algebra homomorphism
\begin{equation} \label{eq:SubEK} \cE_{C,K} \mto{\mu} K 
\end{equation}
which is simply the product
\[ \Sym(E_C^\gd) \te_\kk \Sym(E_C^\gd) \mto{\mu}  \Sym(E_C^\gd). \]

\begin{definition}
Corresponding to the algebra homomorphism $\mu$ of \eqref{eq:SubEK} 
is the point $U$
in $\hE_{C,K} = \Hom_\kk(C_K, \hP_{C,K})$. 
This is the {\it universal} map (here we use the completed tensor product):
\begin{equation} \label{eq:EndUmap}
C \pil C \te (P_C^\gd \hat{\te} P_C) 
\end{equation} sending 
\[ c  \mapsto c \te \underset{\scriptstyle \begin{matrix} l \text{ basis} \\
\text{element of } P_C \end{matrix}} \sum l^* \te l
= \sum_l a_c(l) \te l \]
Using this, (\ref{eq:SubAstar}) becomes the {\it universal} substitution, 
the $K$-linear map
\[ \hP_{C,K} \mto{U_\star} \hP_{C,K}. \]
Let $H = \Hom(C,P_C)^\gd$, the degree one part of $K = \cE_C$, and
$P_{C,H} = H \te_\kk P_C$. Note that the universal map \eqref{eq:EndUmap}
is a map from $C$ to $\hat{P}_{C,H}$.
\end{definition}

If $a \in \hE_C$ is a particular endomorphism, it corresponds to an 
algebra homomorphism (character) 
\begin{align*}  K = \cE_C & \mto{\alpha} \kk  \\ 
a_c(l) = c \te l^* & \mapsto \alpha(c \te l^*).
\end{align*}
Then $U_\star$ induces the substitution
$\hP_C \mto{a \star} \hP_C$
by sending each coefficient $a_c(l) \in K$ to 
$\alpha(c \te l^*) \in \kk$.

\medskip
The co-substitution $\cH_C \mto{\Delta_\star} \cE_C \te \cH_C$
of \eqref{eq:SubStarKord} induces a homomorphism
\[ \cE_C \te \cH_C \pil \cE_C \te \cE_C \te \cH_C \pil \cE_C \te \cH_C \]
which is seen to coincide with the homomorphism \eqref{eq:SubHCK-Koord}
when $K = \cE_C$. The universal substitution therefore corresponds to the map
on coordinate rings which is the co-substitution map, suitably lifted.

\medskip
Recall that the
tensor algebra $T(\OT_C)$ identifies as the forests of ordered trees
$\OF_C$. We may then write $T^c(\OT_C^\gd) = \OF_C^\gd$. By the diagram
\eqref{eq:SubHTId} the co-substitution $\cH_{C,K} \mto{\Delta_\star} \cH_{C,K}$ 
identifies as a map $\OF_{C,K}^\gd \mto{U_\star^T} \OF_{C,K}^\gd$ and we get 
a commutative diagram and its dual
\[ \xymatrix{\OF_{C,K}^\gd \ar[r]^{U_\star^T} \ar[d] & \OF_{C,K}^\gd \ar[d]\\
  P_{C,K}^\gd \ar[r]^{U_\star} & P_{C,K}^\gd}, \quad
\xymatrix{\hat{P}_{C,K} \ar[r]^{U_\star} \ar[d] & \hat{P}_{C,K} \ar[d]\\
\hat{\OF}_{C,K} \ar[r]^{U_\star} & \hat{\OF}_{C,K}}.
\]
We may restrict this to ordered trees and get
\[ \OF_{C,K}^\gd \mto{\oU^T_\star} \OT_{C,K}^\gd, \quad 
\hat{\OT}_{C,K} \mto{\oU_\star} \hat{\OF}_{C,K}. \]
We may also restrict and get 
\[ C_K \pil \hat{P}_{C,K} \pil \hat{\OF}_{C,K}, \]
with dual map
\begin{align} \label{eq:SubUt} U^t: \OF_{C,K}^\gd \pil P_{C,K}^\gd \pil C_K^*
\end{align} 
For use in Subsubsection \ref{sssec:OneGen}, note that \eqref{eq:EndUmap}
sends $C$ to $\hat{P}_{C,H}$ where $H = \Hom(C,P_C)^\gd \sus K$. 
A consequence is that $\OF_C^\gd \sus \OF_{C,K}^\gd$ is mapped
to $C_H^* \sus C_K^*$ by $U^t$.

%


\subsection{Recursion formula}
The universal substitution is described in \cite{MK-L:Back}, and we recall
it.
By attaching the trees in a forest to a root $c \in C$, 
there is a  natural isomorphism
\[\OT_C \iso \OF_C \te C \]
and dually 
\begin{equation} \label{eq:SubOFcOT} \OF_C^\gd \te C^* \mto{\iso} \OT_C^\gd 
\end{equation}
Here we denote the image of $\omega \te \rho$ as $\omega \pode \rho$.

\begin{proposition}[\cite{MK-L:Back}] The following gives a partial recursion
formula for $\oU^T_\star$, the universal co-substitution followed by 
the projection onto the dual ordered trees:
\[\oU^T_\star(\omega) = \sum_{\Delta_{\gft}(\omega)} U^T_\star(\omega^{(1)}) \pode
U^t (w^{(2)}).\]
\end{proposition}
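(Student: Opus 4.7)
The plan is to dualize the basic identity $U_\star(\sigma \gft c) = U_\star(\sigma) \gft a(c)$, valid for a forest $\sigma \in \OF_C$ and a generator $c \in C$, where $a(c) = U_\star(c) \in \hat{P}_{C,K}$ is the image of $c$ under the universal endomorphism. This identity holds because the $\hE_C$-action on $\hP_C$ extends to an action on the enveloping algebra $\hat{\OF}_{C,K}$ respecting the $\gft$-product, as already noted in Section \ref{sec:PrePo}.

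The first preparatory step is to match the root-attaching operation $\pode$ with the Guin--Oudom grafting $\gft$ on $\OF_C \te C$: a short induction on the length of $\sigma$ using $(uv) \gft p = u \gft (v \gft p) - (u \gft v) \gft p$ shows that $\sigma \gft c = \sigma \pode c$ in $\OT_C \sus \OF_C$ whenever $c \in C$, since all non-root contributions cancel ($c$ being a single vertex has no proper subtree to graft into). The next step is to pair the displayed identity with an arbitrary dual element $\omega \in \OF_{C,K}^\gd$ and expand using $\Delta_\gft$, the coproduct on $\OF_{C,K}^\gd$ dual to $\gft$:
\begin{align*}
\langle \oU^T_\star(\omega),\, \sigma \pode c \rangle
 &= \langle \omega,\, U_\star(\sigma) \gft a(c) \rangle \\
 &= \sum_{\Delta_\gft(\omega)} \langle \omega^{(1)},\, U_\star(\sigma) \rangle \cdot \langle \omega^{(2)},\, a(c) \rangle \\
 &= \sum_{\Delta_\gft(\omega)} \langle U^T_\star(\omega^{(1)}),\, \sigma \rangle \cdot \langle U^t(\omega^{(2)}),\, c \rangle.
\end{align*}
The last equality uses the definition of $U^T_\star$ as the dual of $U_\star$ for the first factor, and for the second factor the fact that $a(c) \in \hat{P}_{C,K}$ forces $\omega^{(2)}$ to be paired with $a(c) = U_\star(c)$ only through its projection to $P_{C,K}^\gd$, which is precisely how $U^t$ is built in \eqref{eq:SubUt}. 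Under the duality $\OT_{C,K}^\gd \iso \OF_{C,K}^\gd \te C_K^*$ the right-hand side of the claimed formula pairs with $\sigma \pode c$ to give exactly this expression, so the identity holds against every basis element of $\OT_C$ and thus as elements of $\OT_{C,K}^\gd$.

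The main obstacle is pinning down the compatibility of $U_\star$ with the Guin--Oudom $\gft$-product at the level of the completed enveloping algebra, so that $U_\star(\sigma \gft c) = U_\star(\sigma) \gft a(c)$ is legitimate when $\sigma$ is an arbitrary forest rather than an element of $P_C$; once this is in hand the derivation reduces to bookkeeping with the duality pairings. The adjective ``partial'' in the statement reflects that the recursion only captures the projection $\oU^T_\star$ onto single ordered trees, since the special role of the root label $c \in C$ is what allowed us to identify $\langle \omega^{(2)}, a(c)\rangle$ with $U^t$; to recover the full $U^T_\star$ on forests one combines this with the algebra-morphism property of $U^T_\star$ so that each tree factor of a forest input is expanded separately by the present formula.
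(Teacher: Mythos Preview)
Your proof is correct and follows essentially the same route as the paper: both arguments dualize the equivariance identity $U_\star(f \gft c) = U_\star(f) \gft U(c)$ for a forest $f$ and root $c$, pairing against $\omega$ and expanding via $\Delta_\gft$. The paper sets up the duality abstractly and then runs the same chain of pairings you wrote out; your extra remark that $\sigma \gft c = \sigma \pode c$ for $c\in C$ makes explicit a step the paper leaves implicit when writing $t = f \gft c$.
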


\begin{proof}
Recall the following general fact. Two maps $V \mto{\phi} W$ and 
$W^* \mto{\psi} V^*$ are dual iff for all $v \in V$ and $w^* \in W^*$ 
the pairings
\[ \langle v, \psi(w^*) \rangle = \langle \phi(v), w^* \rangle. \]
We apply this to 
$\phi = \overline{U_\star}$ and 
\[ \psi: \hat{\OF}_{C,K}^\gd \mto{\Delta_\gft} \OF_{C,K}^\gd \hat{\te} \OF_{C,K}^\gd
\mto{U_\star^T \te U^t} \OF_{C,K}^\gd \te C_K^* \mto{\pode} \OT_{C,K}^\gd. \] 

We must then show that
\[ \underset{\Delta_\gft(\omega)}\sum \langle t, 
U^T_\star(\omega^{(1)}) \pode U^t(w^{(2)})) \rangle 
=  \langle \oU_\star(t), \omega \rangle \]
So let $t = f \gft c$. Using first the above fact on the map
(\ref{eq:SubOFcOT}) and its dual: 
\begin{align*} 
\underset{\Delta_\gft(\omega)}\sum \langle t, U^T_\star(\omega^{(1)}) \pode U^t(w^{(2)}) \rangle 
=  & \underset{\Delta_\gft(\omega)}\sum \langle f \te c,  
U^T_\star (\omega^{(1)}) \te 
U^t(\omega^{(2)}) \rangle \\
 = & \underset{\Delta_\gft(\omega)}\sum \langle f, U^T_\star(\omega^{(1)}) 
\rangle \cdot \langle c, U^t(\omega^{(2)}) \rangle \\
 = & \underset{\Delta_\gft(\omega)}\sum 
\langle U_\star(f), \omega^{(1)} \rangle \cdot 
\langle U(c), \omega^{(2)} \rangle \\
= & \langle U_\star(f) \te U(c), \Delta_\gft(\omega) \rangle \\
= & \langle U_\star(f) \gft U(c), \omega \rangle \\
= & \langle \oU_\star(f \gft c), \omega \rangle = \langle \oU_\star(t), 
\omega \rangle
\end{align*}
\end{proof}

We now get the general recursion formula, Theorem 3.7, in \cite{MK-L:Back}.
\begin{proposition}
\[U^T_\star(\omega) = \underset{\Delta_\bullet(\omega)}\sum U^T_\star(\omega_1)
\cdot \oU^T_\star(\omega_2). \]
\end{proposition}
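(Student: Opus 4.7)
The plan is to prove this by a dualisation argument in the spirit of the preceding proposition: two linear maps $\phi:V\to W$ and $\psi:W^{*}\to V^{*}$ are mutual duals precisely when $\langle v,\psi(w^{*})\rangle=\langle\phi(v),w^{*}\rangle$ for all $v$ and $w^{*}$. I will take $\phi=U_\star$ acting on $\hat{\OF}_{C,K}=\hU(\hP_{C,K})=\hat{T}(\OT_{C,K})$, and $\psi$ equal to the right-hand side of the displayed formula, regarded as a map $\OF_{C,K}^{\gd}\to\OF_{C,K}^{\gd}$. It will then suffice to verify
\[
\langle f,\,U^{T}_{\star}(\omega)\rangle \;=\; \sum_{\Delta_{\bullet}(\omega)}\,\langle f,\,U^{T}_{\star}(\omega_{1})\cdot\oU^{T}_{\star}(\omega_{2})\rangle
\]
for every forest $f\in\OF_{C,K}$ and every dual forest $\omega\in\OF_{C,K}^{\gd}$.

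The essential input is that $U_\star$ is an endomorphism of post-Lie algebras, so its unique extension to the enveloping algebra $\hat{T}(\OT_{C,K})$ is an algebra homomorphism for the concatenation product $\bullet$. For a non-empty forest $f=t_{1}\cdots t_{n}$ I would split off the last tree, $f=(t_{1}\cdots t_{n-1})\bullet t_{n}$, and use the duality of $\bullet$ with $\Delta_{\bullet}$ to compute
\[
\langle U_\star(f),\,\omega\rangle \;=\; \sum_{\Delta_{\bullet}(\omega)}\langle t_{1}\cdots t_{n-1},\,U^{T}_{\star}(\omega_{1})\rangle\,\langle t_{n},\,U^{T}_{\star}(\omega_{2})\rangle.
\]
Since $t_{n}\in\OT\subset\OF$ is a single tree, the pairing $\langle t_{n},\,\cdot\,\rangle$ factors through the projection $\OF^{\gd}\twoheadrightarrow\OT^{\gd}$, so I may replace the last factor by $\langle t_{n},\,\oU^{T}_{\star}(\omega_{2})\rangle$.

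For the right-hand side, I would use that the concatenation product $\cdot$ on $\OF^{\gd}$ (under the dual basis identification) is dual to $\Delta_{\bullet}$, so
\[
\langle f,\,\alpha\cdot\beta\rangle \;=\; \sum_{j=0}^{n}\langle t_{1}\cdots t_{j},\,\alpha\rangle\,\langle t_{j+1}\cdots t_{n},\,\beta\rangle.
\]
Substituting $\alpha=U^{T}_{\star}(\omega_{1})$ and $\beta=\oU^{T}_{\star}(\omega_{2})\in\OT^{\gd}$, only the term $j=n-1$ survives (as $\beta$ is supported on single trees), and the remaining expression matches the one computed in the previous paragraph. Summing over $\Delta_{\bullet}(\omega)$ completes the verification.

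The main subtlety is tracking the projection $\OF^{\gd}\twoheadrightarrow\OT^{\gd}$: on the left it arises because pairing against a single tree sees only the tree component of $U^{T}_{\star}$'s output, while on the right it is explicit via $\oU^{T}_{\star}$, which in turn forces the deconcatenation of $f$ to split off exactly one tree. Reconciling these two dual mechanisms is the essential content of the identity; the degenerate case $f=1$ is either handled separately or absorbed by restricting $\omega$ to the augmentation ideal of $\OF^{\gd}$.
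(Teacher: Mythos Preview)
Your proposal is correct and follows essentially the same route as the paper's proof: both pair against a forest with its rightmost tree split off, use that $U_\star$ is a $\bullet$-algebra homomorphism together with the duality of concatenation and $\Delta_\bullet$, and then exploit that $\oU^T_\star(\omega_2)$ lies in $\OT^\gd$ to collapse the pairing. Your explicit deconcatenation of $f$ and the observation that only the $j=n-1$ term survives is exactly the content of the paper's single line ``Since $\oU^T_\star(\omega_2)$ is a dual tree, this is \ldots'', just unpacked.
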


\begin{proof}
Given a forest $f\cdot t$ where $t$ is a tree. We will show
\[ \langle U_\star(ft), \omega \rangle = 
\underset{\Delta_\bullet(\omega)}\sum \langle ft, U^T_\star(\omega_1) \cdot
\oU^T_\star(\omega_2)\rangle.\]
We have:
\[ \langle U_\star(ft), \omega \rangle = 
\langle U_\star(f) \cdot U_\star(t), \omega \rangle. \]
Since concatenation and deconcatenation are dual maps, this is
\begin{align*}
= & \sum_{\Delta_\bullet(\omega)} \langle U_\star(f) \te U_\star(t),\omega_1 \te \omega_2 \rangle \\
= & \underset{\Delta_\bullet(\omega)}\sum 
\langle U_\star(f), \omega_1 \rangle \cdot \langle U_\star(t),
\omega_2 \rangle \\
= & \underset{\Delta_\bullet(\omega)}\sum \langle f, 
U^T_\star(\omega_1) \rangle \cdot \langle t, \oU^T_\star(\omega_2) \rangle.
\end{align*}
Since $\oU^T_\star(\omega_2)$ is a dual tree, this is:
\[ = \underset{\Delta_\bullet(\omega)}\sum \langle ft, U^T_\star(\omega_1) \cdot 
\oU^T_\star(\omega_2) \rangle. \]
\end{proof}

\subsubsection{The case of one free generator} \label{sssec:OneGen}
Now consider the case that $C = \langle \bullet \rangle$ is a one-dimensional
vector space. Recall the isomorphism $\psi: \cE_\bullet \iso T(\OT_\bullet^\gd)$ 
as algebras but the coproduct on this is different from 
$\cH_\bullet \iso T(\OT_\bullet^\gd)$.
To signify the difference, we denote the former by $T^\circ(\OT_\bullet^\gd)$.
It is the free algebra on the alphabet $a_\bullet(t)$ where the $t$ are
ordered trees.
Multiplication on $\cE_\bullet = Sym(P_\bullet^\gd)$ corresponds to
the shuffle product on $T^\circ(\OT_\bullet^\gd)$. 

The coproduct
\[ \cH_\bullet \mto{\Delta_\star} \cE_\bullet \te_\kk \cH_\bullet\]
may then by Subsubsection \ref{subsubsec:SymTe} be written as
\[ T(\OT_\bullet^\gd) \mto{\Delta_\star} T^\circ(\OT_\bullet^\gd) \te_\kk
T(\OT_\bullet^\gd) = K \te_\kk T(\OT_\bullet) . \]
The two bialgebras $T(\OT_\bullet^\gd)$ and $T^\circ(\OT_\bullet^\gd)$
are said to be in cointeraction, a notion studied in 
\cite{EF:TwoInteract}, \cite{Ma-OrG}, \cite{Fo-Erh}, and
\cite{Fo-Chr}.

The element $U^t(\omega^{(2)})$ is in $C_K^* \iso K$. 
By the comment following \eqref{eq:SubUt} it is in 
\[ C_H^* = \Hom_\kk(\bullet,P_\bullet)^\gd \te_\kk \bullet^* \iso P_\bullet^\gd. \]
Then $U^t(\omega^{(2)})$ is simply the image of $\omega^{(2)}$ 
by the natural projection
$T(\OT_\bullet^\gd) \pil P_\bullet^\gd$. We may consider
$U^t(\omega^{(2)})$ as an
element of $K \iso T^\circ(\OT_\bullet^\gd)$ via the isomorphism $\psi$ above.
We are then using the Euler
idempotent map
\[ T(\OT_\bullet^\gd) \mto{\pi} T(\OT_\bullet^\gd) \iso T^\circ(\OT_\bullet^\gd), \] 
so that $U^t(\omega^{(2)}) = \pi(\omega^{(2)})$.

Let $B_+$ be the operation of attaching a root to a forest in order
to make it a tree. By a decorated shuffle $\shuffle$ below we
mean taking the shuffle product of the corresponding factors in
$K = T^\circ(\OT_\bullet^\gd)$. By the decorated $\cdot$ product
we mean concatenating the corresponding factors in $T(\OT_\bullet^\gd)$.
Then we may write
the recursion  as:
\begin{proposition}
\begin{align*} \Delta_\star(\omega) = & 
\shu_{13} \cdot_{24} 
\Delta_\star(\omega_1) \te \oU^T_\star(\omega_2) \\
= & \shu_{135} \cdot_{24} \Delta_\star(\omega_1) 
\te B^+( \Delta_\star(\omega_2^{(1)})) \te \pi(\omega_2^{(2)})
\end{align*}
\end{proposition}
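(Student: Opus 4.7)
The plan is to repackage the two preceding recursions in tensor notation, working throughout with $\Delta_\star(\omega)\in K\te T(\OT_\bullet^\gd)$ where $K = \cE_\bullet \iso T^\circ(\OT_\bullet^\gd)$. Under the isomorphism $\psi$ the product in $K$ corresponds to the shuffle product on $T^\circ(\OT_\bullet^\gd)$, while the product on $T(\OT_\bullet^\gd)$ is concatenation. I would first note that via the identification $\cH_\bullet\iso T(\OT_\bullet^\gd)$, the universal co-substitution $U^T_\star$ is literally $\Delta_\star$ on $T(\OT_\bullet^\gd)$.

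For the first equality, I would start from the recursion
\[ U^T_\star(\omega) = \sum_{\Delta_\bullet(\omega)} U^T_\star(\omega_1) \cdot \oU^T_\star(\omega_2), \]
and compute the right-hand product inside $K\te T(\OT_\bullet^\gd)$. Writing $\Delta_\star(\omega_1)=\sum a_1\te b_1$ and $\oU^T_\star(\omega_2)=\sum a_2\te b_2$, the product of these two elements of $K\te T(\OT_\bullet^\gd)$ is $\sum (a_1\shu a_2)\te (b_1\cdot b_2)$, since $K$-multiplication is shuffle and $T(\OT_\bullet^\gd)$-multiplication is concatenation. Viewing $\Delta_\star(\omega_1)\te \oU^T_\star(\omega_2)$ as an element of $K\te T(\OT_\bullet^\gd)\te K\te T(\OT_\bullet^\gd)$ with labelled positions $(1,2,3,4)$, this product is exactly $\shu_{13}\cdot_{24}$, giving the first equality.

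For the second equality, I would plug the second recursion
\[ \oU^T_\star(\omega_2) = \sum_{\Delta_\gft(\omega_2)} U^T_\star(\omega_2^{(1)})\pode U^t(\omega_2^{(2)}) \]
into the first. Since $C=\langle\bullet\rangle$ is one-dimensional, the identification $\OT_\bullet^\gd\iso \OF_\bullet^\gd\te C^*$ collapses to $B^+$ on the forest factor, and $U^t(\omega_2^{(2)})$ is a scalar-valued element of $C_H^*\sus K$ which, by the discussion preceding the proposition, equals $\pi(\omega_2^{(2)})$ under the Euler idempotent embedding $T(\OT_\bullet^\gd)\iso T^\circ(\OT_\bullet^\gd)=K$. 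Thus $\pode$ acts on $K\te T(\OT_\bullet^\gd)$ and $K$ by multiplying (i.e.\ shuffling) the $K$-factors and applying $B^+$ to the forest part. Substituting this into the four-tensor expression yields the five-tensor expression $\Delta_\star(\omega_1)\te B^+(\Delta_\star(\omega_2^{(1)}))\te \pi(\omega_2^{(2)})$ in $K\te T\te K\te T\te K$, on which we must shuffle positions $1,3,5$ (in $K$) and concatenate positions $2,4$ (in $T(\OT_\bullet^\gd)$); this is exactly $\shu_{135}\cdot_{24}$.

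The main obstacle is purely bookkeeping: correctly identifying how the bilinear operation $\pode$ decomposes into a shuffle on the $K$-coefficients and the operator $B^+$ on the tree factor in the one-generator setting, and consistently labelling tensor positions when passing from the four-factor product to the five-factor product. Once that identification is made, both equalities reduce to a direct substitution, with no additional analysis required.
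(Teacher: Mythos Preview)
Your proposal is correct and matches the paper's intent: the paper gives no explicit proof of this proposition, treating it as an immediate reformulation of the two preceding recursion formulas in the tensor notation set up in Subsection~\ref{sssec:OneGen}, and your argument carries out exactly that reformulation. The identifications you make---$U^T_\star$ with $\Delta_\star$, the product in $K\te T(\OT_\bullet^\gd)$ as shuffle-times-concatenation, $\pode$ collapsing to $B^+$ when $C=\langle\bullet\rangle$, and $U^t(\omega_2^{(2)}) = \pi(\omega_2^{(2)})$---are precisely the ones the paper spells out just before stating the proposition.
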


\section*{Acknowledgements}
We would like to thank Kurusch Ebrahimi-Fard,  Kristoffer F{\o}llesdal  and Fr\'{e}d\'{e}ric Patras for discussions on the topics of this paper.

\bibliographystyle{amsplain}
\bibliography{Bibliography}

\end{document}